\DeclareMathOperator{\lcm}{lcm}
\begin{document}

\title[Local Weyl equivalence]{On local Weyl equivalence \\ of higher order Fucshian equations}

\author[S. Tanny]{Shira Tanny}
\author[S. Yakovenko]{Sergei Yakovenko}
\address{
Department of Mathematics\\
Weizmann Institute of Science,
ISRAEL}
\email{{\{shira.giat, sergei.yakovenko\}@weizmann.ac.il}}

\begin{abstract}
We study the local classification of higher order Fuchsian linear differential equations under various refinements of the classical notion of the ``type of differential equation'' introduced by Frobenius. The main source of difficulties is the fact that there is no natural group action generating this classification.

We establish a number of results on higher order equations which are similar but not completely parallel to the known results on local (holomorphic and meromorphic) gauge equivalence of systems of first order equations.
\end{abstract}

\date{Decemeber 25, 2014}
\let\parasymbol=\S
\def\secref#1{\parasymbol\ref{#1}}
\def\C{{\mathbb C}}
\def\N{{\mathbb N}}
\edef\Ore{\O}
\def\O{{\mathscr O}}
\def\M{{\mathscr M}}
\def\W{{\mathscr W}}
\def\G{{\mathscr G}}
\def\R{{\mathscr R}}
\def\F{{\mathscr F}}
\def\Z{{\mathbb Z}}
\def\P{{\mathbb P}}
\def\E{{\scriptstyle{\mathscr E}}}
\def\H{\boldsymbol H}
\def\Ker{\operatorname{Ker}}
\def\:{\colon}
\def\eu{{\epsilon}}
\def\f{\varphi}
\def\l{\lambda}
\def\L{\varLambda}
\def\ord{\operatorname{ord}}
\def\Mat{\operatorname{Mat}}
\def\^{\hat}
\def\~{\widetilde}
\def\ssm{\smallsetminus}
\def\le{\leqslant}
\def\ge{\geqslant}
\def\GL{{\operatorname{GL}}}
\def\id{\operatorname{id}}
\edef\iorig{\i}
\def\i{\mathrm i}
\def\d{\partial}
\def\<{\left<}
\def\>{\right>}
\def\S{\varSigma}
\def\Arg{\operatorname{Arg}}
\def\lcm{\operatorname{lcm}}
\def\sh#1{^{\scriptscriptstyle[#1]}}
\def\supp{\operatorname{supp}}

\newtheorem{Thm}{Theorem}
\newtheorem{Lem}{Lemma}
\newtheorem{Prop}{Proposition}
\newtheorem{Cor}{Corollary}

\theoremstyle{definition}
\newtheorem{Def}{Definition}
\newtheorem{Prob}{\textcolor{red}{Problem}}
\newtheorem{Ex}{Example}

\theoremstyle{remark}
\newtheorem{Rem}{Remark}

\maketitle

\section{Local classification of linear ordinary differential equations}

\subsection{Systems and higher order equations}
The local analytic theory of linear ordinary differential equations exists in two parallel flavours, either that of systems of several first order equations, or of scalar (higher order) equations. One can relatively easily transform one type of objects to the other, yet this transformation loses some additional structures.

Let $\Bbbk$ be a differential field, called the \emph{field of coefficients}. We will be interested almost exclusively in the field $\M=\M(\C^1,0)$ of \emph{meromorphic germs} at the origin $t=0$ on the complex line $\C=\C^1$, the quotient field of the ring $\O=\O(\C^1,0)$ of holomorphic germs at the origin. The standard $\C$-linear derivation $\d=\frac{d}{dt}$ acts on both $\O$ and $\M$ according to the Leibniz rule and extends on vector and matrix functions with entries in $\Bbbk$ un the natural way.

Let $A\in\Mat(n,\Bbbk)$ be an $(n\times n)$-matrix function, called the \emph{coefficients matrix}, $A=\|a_{ij}(t)\|_{i,j=1}^n$, $a_{ij}\in\Bbbk$. This matrix defines the homogeneous system of  linear ordinary equations
\begin{equation}\label{ls}
\d x=Ax,\qquad x=(x_1,\dots,x_n)\in\C^n, \quad t\in(\C,0).
\end{equation}
The system \eqref{ls} only exceptionally rarely has a solution $x\in\Bbbk^n$. However, it always has $n$ linear independent solutions in the class of functions analytic in a small punctured neighborhood of the origin, which are multivalued (ramified) over the point $t=0$. Assembling these solutions (as column vectors) into a multivalued matrix function $X=X(t)$ whose determinant never vanishes for $t\ne0$, we can without loss of generality reduce the system \eqref{ls} to one \emph{matrix differential} equation $\d X=AX$. For instance, the \emph{trivial} system is defined by the equation $\d X=0$, and any invertible constant matrix $C\in\GL(n,\C)$ is its solution.

Alternatively, one may consider homogeneous linear ordinary differential equations of the form
\begin{equation}\label{le}
a_0\d^n u+a_1\d^{n-1}u+\cdots+a_{n-1}\d u+a_n u=0,\qquad a_0,\dots,a_m\in\Bbbk, \ a_0\ne0.
\end{equation}
Each equation \eqref{le} is a linear (over $\Bbbk$) relation between the unknown function $u$ and its derivatives $\d^k u$ up to order $k=n$. Traditionally, such equations are written using \emph{linear differential operators}: if $L=\sum_0^n a_i\d^{n-i}\in\Bbbk[\d]$ is the formal expression, then the above equation is written under the form $Lu=0$. Elements of the field $\Bbbk$ are identified with ``operators of zeroth order'' $u\mapsto au$, $a\in\Bbbk$. The key feature of differential operators is the possibility of their composition which equips the $\Bbbk$-space of linear operators with the structure of (noncommutative infinite-dimensional) $\C$-algebra, denoted by $\W$.\footnote{The classical Weyl algebra is generated over $\C$ by two elements $t,\d$ with the commutativity relation $[\d,t]=1$. It embeds naturally into the algebra $\Bbbk[\d]$ for the differential field of rational functions $\Bbbk=\C(t)$.}

As before, generically solution exists only as a multivalued function defined for $t\ne 0$ and ramified over the origin.

\subsection{Mutual reduction}\label{sec:mutual}
One can easily transform the equation \eqref{le} to a system \eqref{ls} by introducing the variables $x_k=\d^{k-1}u$, $k=1,\dots,n$. The corresponding first order identities take the form
\begin{equation}\label{companion}
\d x_k=x_{k+1},\quad k=1,\dots,n-1,\qquad \d x_{n}=-a_0^{-1}(a_1x_{n-1}+\cdots+a_nx_1).
\end{equation}
Conversely, each of the variables $u=x_k$ of a solution $x(t)$ to the system \eqref{ls} satisfies an equation of the form \eqref{le}. To obtain this equation, note that all derivatives $\d^i u$ are $\Bbbk$-linear combinations of the formal variables $x_1,\dots,x_n$. Indeed, by induction, if $\d ^i x=A_i x$, $A_i\in\Mat(n,\Bbbk)$, $A_0=E$, $A_1=A$, then
\begin{equation}
\d^{i+1} x=(\d A_i)x+A_iA_1x=(\d A_i+A_iA)x=A_{i+1}x,\qquad i=1,2,\dots.
\end{equation}
Taking the $k$th line of these identities yields the required liner combination. Since the space of combinations is $n$-dimensional (over $\Bbbk$), we conclude  that $n+1$ derivatives $u,\d u,\d^2 u, \dots,\d^n u$ are necessarily linear dependent over $\Bbbk$ (the order can be less than $n$). This dependence is of the form \eqref{le}, but the corresponding equation will in general depend on the choice of $k$ between $1$ and $n$. Slightly modifying this construction, one can produce a differential equation of order $\le n^2$, satisfied by \emph{all components} $x_{ij}$ of any fundamental matrix solution $X=\|x_{ij}\|$ of the equation $\d X=AX$.

\subsection{Gauge equivalence of linear systems. Equations of the same type}\label{sec:gauge}
The group $\G=\GL(n,\Bbbk)$ of invertible matrix functions with entries in the field $\Bbbk$ acts naturally on the space of all linear systems of the form \eqref{ls}. Namely, if $H=\|h_{ij}(t)\|_{i,j=1}^n$, $h_{ij}\in\Bbbk$, is such a function with the inverse $H^{-1}\in\GL(n,\Bbbk)$, then one can ``change variables'' in \eqref{ls} by substituting $y=Hx$, $y=(y_1,\dots,y_n)\in\C^n$. This substitution transforms \eqref{ls} to the identity $\d y=(\d H)x+H\d x=(\d H)H^{-1}y+HAH^{-1}y$, so that
\begin{equation}\label{gauge}
\d y=By,\qquad B\in\Mat(n,\Bbbk),\quad (\d H)\cdot H^{-1}+HAH^{-1}.
\end{equation}
This differs from the conjugacy of linear operators by the \emph{logarithmic derivative} $(\d H)\cdot H^{-1}$; this term vanishes if $H$ is constant.

Two systems $\d x=Ax$ and $\d y=By$ are called \emph{gauge equivalent}, if there exists an element $H\in\G$ such that \eqref{gauge} holds. Since $\G$ is a group, this equivalence naturally is reflexive, symmetric and transitive. Thus one can formulate the problem of \emph{classification}: what is the simplest form to which a given linear system can be transformed by a suitable gauge transformation? The corresponding theory is fairly well established, see below for the initial results.

\begin{Rem}
Systems of linear equations \eqref{ls} can be considered geometrically as flat meromorphic connections on a vector bundle over the (complex) 1-dimensional base. The gauge transform corresponds to the change of a tuple of horizontal sections locally trivializing this bundle. Such interpretation allows for global and multidimensional generalizations, see \cite{thebook}*{Chapter III} and \cite{mero-flat}.
\end{Rem}

Unfortunately, the notion of gauge equivalence is too restricted to deal with high order equations: indeed, since the unknown function is scalar, only the transformations of the form $u=hv$, $h\in\Bbbk$, can be considered, but one cannot expect this small group to produce a meaningful classification.

Instead it is natural to consider $\Bbbk$-linear changes of variables of a more general form which involve the unknown function and its derivatives. More specifically, one can choose a tuple of functions $h=(h_0,\dots,h_{n-1})\in\Bbbk^n$ and use it to change the dependent variable from $u$ to $v$ as follows,
\begin{equation}\label{optrans}
v=h_1\d^{n-1}u+h_2\d^{n-2}u+\cdots+h_{n-1}\d u+h_{n}u.
\end{equation}
The reason why derivatives of order $n$ and may be omitted, is rather clear: if the transformation \eqref{optrans} is applied to an equation \eqref{le} of order $n$, then all such higher order derivatives can be replaced by $\Bbbk$-linear combinations of the lower order derivatives by virtue of the equation.

The new variable $v$ also satisfies a linear differential equation which can be derived as follows (cf.~with \secref{sec:mutual}). Differentiating the formula \eqref{optrans} for $v$ by virtue of the equation \eqref{le}, one can see that all higher order derivatives $\d^i v$ can be expressed as linear combinations (over $\Bbbk$) of the formal derivatives $\d^j u$, $u=0,\dots,n-1$. The space of such combinations is $n$-dimensional, so no later than on the $n$th step there will necessary appear an identity of the form $b_0\d^m v+b_1\d^{m-1}v+\cdots+b_{m-1}\d v+b_mv=0$, $b_0\ne0$, $b_j\in\Bbbk$, $m\le n$, which is the transform of the equation \eqref{le} by the action of \eqref{optrans}. Classically, the initial equation and the transformed equation are called \emph{equations of the same type}, see \cite{singer,tsarev,ore}, but we would prefer to use the term ``Weyl equivalence'' (justifying the fact), with an intention to refine it by imposing additional restrictions imposed on the transformation \eqref{optrans}.

In order for this change of variables to be ``faithful'', one has to impose the additional condition of nondegeneracy: \emph{no solution of \eqref{le} is mapped into identical zero by the transformation \eqref{optrans}}. Indeed, if this extra assumption is violated, one can easily transform the initial equation to the trivial (meaningless) form $0=0$. On the other hand, accepting this condition guarantees (as can be easily shown) that the transformed equation has the same order $m=n$.

Still a few questions remain unanswered by this na{\"\iorig}ve approach. The transformation \eqref{optrans}, unlike the gauge transformation of linear systems, is rather problematic to invert: transition from $u$ to $v$ always has a nontrivial kernel (solutions of the corresponding homogeneous equations). In addition, ``restoring'' $u$ from $v$ is in general a transcendental operation requiring integration of linear equations, and it is by no means clear how one should proceed.

The algebraic nature of these questions was studies since 1880-ies by F. Frobenius, E. Landau, A. Loewy, W. Krull and culminated in the perfect form in the brilliant paper by \Ore.~Ore \cite{ore}. The idea is to consider the noncommutative algebra of differential operators $\Bbbk[\d]$ with coefficients in $\Bbbk$. The next section \secref{sec:ore} summarizes the necessary fundamentals of the ``algebraic theory of noncommutative polynomials'' following \cite{ore}.

\subsection{Singularities, monodromy}\label{sec:analysis}
From this moment we focus on the special case where $\Bbbk=\M$ is the differential field of meromorphic germs at the origin and denote for brevity $\W=\M[\d]$ the algebra of operators with meromorphic coefficients.

For each linear system \eqref{ls} or a high order equation \eqref{le} with meromorphic coefficients one can choose representatives of germs of all coefficients $a_{ij}(t)$, resp., $a_i(t)$ in a punctured neighborhood of the origin $(\C^1,0)\ssm\{0\}$ so small that all representatives are holomorphic in this punctured neighborhood. The classical theorems of analysis guarantee that solutions of the system (resp., equation) are holomorphic on the universal cover of this punctured neighborhood, i.e., in the more traditional terminology, are multivalued analytic functions on $(\C^1,0)$ ramified at the origin.

If the coefficients of the system \eqref{ls} are holomorphic at the origin, i.e., $A\in\Mat(n,\O)\subsetneq\Mat(n,\M)$, then for the same reasons solutions of the system are holomorphic (hence single-valued) at the origin. This case is called \emph{nonsingular}, and the corresponding matrix equation admits a unique solution $X\in\GL(n,\O)$ with the initial condition $X(0)=E$ (the identity matrix).

Solution $X$ of a general matrix equation $\d X=AX$ with $A\in\GL(\M,n)$ after continuation along a small closed loop around the origin gets transformed into another solution $X'=XM$ of the same equation. The \emph{monodromy matrix} $M\in\GL(n,\C)$ depends on $X$.

A homogeneous equation \eqref{le} defined by a linear operator $L=\sum_{i=0}^n a_i\d^{n-i}$ can always be multiplied by a meromorphic multiplier so that all its coefficients become holomorphic and at least one of them is nonvanishing at the origin. The reduction \eqref{companion} shows that if it is the leading coefficient $a_0$  that is nonvanishing, then all solutions of the equation $Lu=0$ are holomorphic at the origin (we call such operators \emph{nonsingular}), otherwise they may be ramified at the origin.

Choose a neighborhood $U=(\C^1,0)$ and meromorphic representatives of the germs $a_j(\cdot)$ which have no other poles in $U$ expect for $t=0$. If $0\ne t_0\in U$ is any other point in the domain of the system (equation), then it is well known that germs of solutions of the system (equation) $Lu=0$ form a $\C$-linear subspace in $Z_L\subset\O(\C,t_0)$ of dimension $\dim_\C Z_L$ exactly equal to $n$. After the analytic continuation along a small loop around the origin, this space is mapped into itself by a linear invertible map called the \emph{monodromy transformation} (monodromy, for short): for any basis $u_1,\dots,u_n$ in the space of solutions (considered as a row vector function), we have
\begin{equation}\label{monodromy}
\Delta \begin{pmatrix}u_1&\cdots& u_n\end{pmatrix}=\begin{pmatrix}u_1&\cdots& u_n\end{pmatrix}M
\end{equation}
for a suitable nondegenerate matrix $M$ (depending on the basis $\{u_i\}_{i=1}^n$).

\subsection{Different flavors of the gauge classification}\label{sec:sys}
The gauge transformation group $\G=\GL(n,\M)$ introduced above, may be too large for certain problems of analysis, see \secref{sec:analysis}. For several reasons it is interesting to consider a smaller group $\G_h=\GL(n,\O)$ of \emph{holomorphic} matrix functions which are holomorphically invertible. It is the semidirect product of $\GL(n,\C)$ and the group $\G_0$ of holomorphic matrix germs $H$ which are identical at the origin, $\G_0=\{H\in\G:H(0)=E\}$.

Besides, one can identify two types of singularities of linear systems, characterized by strikingly different behavior of  solutions, called respectively  \emph{regular} (in full, regular singular, to avoid confusion with nonsingular systems) and \emph{irregular} singularities. Recall \cite{thebook}*{Definition 16.1} that the system \eqref{ls} is called regular if the norm $|X(t)|$ of any its fundamental matrix solution grows no faster than polynomially when approaching the singular point in any sector on the $t$-plane (more precisely, on the universal cover of $(\C^1,0)\ssm 0$):
\begin{equation}\label{moderate}
|X(t)|\le Ct^{-N} \quad\forall t\in(\C^1,0),\ \alpha<\Arg t<\beta,\qquad C>0,\ N<+\infty,
\end{equation}
for some constants $C,N$ depending on the sector (its opening and the radius). This condition is difficult to verify as it refers to the properties of solutions, but it is automatically satisfied for \emph{Fuchsian} systems, when the meromorphic matrix function $A$ has a pole of at most first order \cite{thebook}*{Theorem 16.10 (Sauvage, 1886)}.

\begin{Ex}
An \emph{Euler system} is any system of the form $\d X=t^{-1}BX$ with a \emph{constant} matrix $B\in\Mat(n,\C)$. Its fundamental matrix solution is given by the (multivalued) matrix function $X(t)=t^B=\exp(B\ln t)$, $t\in(\C,0)$. If $B=\operatorname{diag} (\l_1,\dots,\l_n)$ is a diagonal matrix with $\l_i\in\C$, then the solution is also diagonal, $X(t)=\operatorname{diag}(t^{\l_1},\dots,t^{\l_n})$. The monodromy matrix of this solution is $\exp 2\pi i B\in\GL(n,\C)$.

In general if $\l_1,\dots,\l_n$ are the eigenvalues of the matrix $B$, then the corresponding Euler system is called \emph{resonant} if some of the differences $\l_i-\l_k$ are natural numbers (nonzero), otherwise the system is called \emph{nonresonant}.
\end{Ex}

The principal results on classification of linear systems are summarized in Table~1, based on \cite{thebook}*{\parasymbol 16, \parasymbol 20}.

\begin{table}
\caption{Normal forms of linear systems.}
\renewcommand{\arraystretch}{1.5}
\begin{tabular}{|c | c | c|}
\hline
Type of singularity / Group & Holomorphic $\G_0$ & Meromorphic $\G$ \\ \hline
Nonsingular & Trivial & Trivial \\ \hline
Fuchsian nonresonant & Euler   & \\
Fuchsian resonant & Polynomial integrable  & Euler \\ \cline{1-2}
Regular non-Fuchsian & Rational  & \\ \hline
Irregular nonresonant & \multicolumn{2}{|c|}{Formally diagonalizable, divergent  } \\ \cline{2-3}
Irregular resonant & \multicolumn{2}{|c|}{Ramified gauge transforms are required } \\ \hline
\end{tabular}
\medskip
\parbox{\textwidth}{\begin{small}\advance\baselineskip by -2pt
\subsubsection*{Polynomial normal form} The system takes the form  $\d X=t^{-1}(B_0+B_1t+B_2t^2+\cdots+B_pt^p)X$, where $p$ is the maximal \emph{integer} difference between the eigenvalues of the Jordan matrix $B_0$. The matrices $B_k$ may have nonzero entry in the $(i,j)$th position only if $\l_i-\l_j=k$, that is, are very sparse. The system in the normal form can be explicitly solved: there exists a fundamental matrix solution of the form $X(t)=t^Pt^Q$ with two constant matrices $P,Q\in\Mat(n,\C)$ \emph{not commuting} between themselves.
\subsubsection*{Rational normal norm} In this case the normal form is rational and explicit but its description is off the main track of this work.
\subsubsection*{Irregular systems}
For irregular systems with the matrix of coefficients represented by a Laurent series $A(t)=t^{-r}(A_0+tA_1+\cdots)$, $r\ge 2$, the definition of resonance requires that the eigenvalues of the leading matrix coefficient $A_0$ are pairwise different. In the nonresonant case one can find a formal matrix series $H(t)=E+H_1t+H_2t^2+\cdots$ which reduces the system to a diagonal normal form $\d X=t^{-r}D(t)X$ with a diagonal polynomial normal form, $D(0)=A_0$, but this series almost always diverges, see \cite{thebook}*{\parasymbol 20}. To deal with the resonant case, one has to consider gauge transformations with entries being themselves ramified, i.e., involving noninteger powers of $t$. We will not deal with irregular systems or equations in this paper.\par
\end{small}}
\end{table}

The notions of (ir)regularity can be defined also for linear equations of higher order. Somewhat mysteriously, unlike in the case of general linear systems, it is \emph{equivalent} to a condition on the order of the poles of the ratios $a_i/a_0\in\M$ of the coefficients of the equation (this condition is also called the Fuchsian condition).

\subsection{Goals of the paper and main results}
We study the classification of nonsingular or Fuchsian (singular) equations with respect to the Weyl equivalence (formally introduced below).

It can be easily shown (see below) that nonsingular equations are Weyl equivalent to the trivial equation $\d^n u=0$, whose solutions are polynomials of degrees $\le n-1$. An equally simple fact is the Weyl equivalence of any Fuchsian equation to an Euler equation. Furthermore, we show that the property of a Fuchsian equation to possess only holomorphic (or meromorphic) solutions can be expressed in terms of Weyl equivalence.

In our paper we introduce a more fine  \emph{Fuchsian equivalence}, or $\F$-equivalence for short, using expansion of operators in noncommutative Taylor series. It turns out that the corresponding classification of Fuchsian operators is very similar to the holomorphic classification of Fuchsian systems. In particular, in the nonresonant case any Fuchsian equation is $\F$-equivalent to an Euler equation, while resonant operators are $\F$-equivalent to operators with polynomial coefficients, i.e., from $\C[t][\d]$. Finally, we show that any (resonant) Fuchsian operator is $\F$-equivalent to an operator which is \emph{Liouville integrable}, that is, whose solutions can be obtained from rational functions by iterated integration and exponentiation.

\subsection{Acknowledgements}
We are grateful to all our friends and colleagues who helped us to identify classical sources and thus avoid re-inventing the bicycle. The primary gratitude goes to Yuri Berest, Gal Binyamini, Dima Gourevitch, Dmitry Novikov, Michael Singer and Yuri Zarkhin.

\section{Algebras of differential operators}

In this section we recall the basic facts about the algebra of differential operators with coefficients from a different field.

\subsection{Noncommutative polynomials in one variable over a differential field}\label{sec:ore}
Consider the $\C$-algebra $\Bbbk[\d]$ generated by the differential field $\Bbbk$ and the symbol $\d$ with the noncommutative multiplication satisfying the Leibniz rule,
\begin{equation}\label{leibniz}
\d\cdot a=a\cdot \d+a',\qquad a,a'\in\Bbbk,\quad a'=\d a=\text{ the derivative of }a.
\end{equation}
This algebra can be considered as the algebra of differential operators acting on ``test functions'', where elements from $\Bbbk$ act by multiplication $u\mapsto au$ and $\d$ is the derivation. The operation corresponds to the composition of operators (and the dot will be omitted from the notation).

Any operator from $\Bbbk[\d]$ can be uniquely represented under the ``standard form''
\begin{equation}\label{ldo}
 L=a_0\d^n+a_1\d^{n-1}+\cdots+a_{n-1}\d+a_n,\qquad a_0,\dots,a_n\in\Bbbk,\ a_0\ne 0
\end{equation}
with the coefficients $a_i$ to the left from the powers of $\d$. The number $n\ge 0$ is called the order of the operator $L$. The composition $LM$ of two operators $L$ and $M=b_0\d^m+\cdots\in\Bbbk[\d]$ of orders $n$ and $m$ is an operator of order $n+m$ with the (nonzero) leading coefficient $a_0b_0\in\Bbbk$.

The key property of the algebra $\Bbbk[\d]$ is the possibility of division with remainder. Indeed, if $n=\ord L\ge m=\ord M$, then the difference $L-a_0b_0^{-1}\d^{n-m}M$ is an operator with zero (absent) ``leading coefficient'' before $\d^{n}$, i.e., is of order strictly less than $n$. Iterating this order depression, one can find two operators $Q,R\in\Bbbk[\d]$ such that
\begin{equation}\label{divrem}
L=QM+R,\qquad \ord Q=\ord L-\ord M,\quad \ord R<\ord M.
\end{equation}
When $R=0$ we say that $L$ is \emph{divisible} by $M$.

This construction allows to define for any two operators $L,M\in\Bbbk[\d]$ their \emph{greatest common divisor} $D=\gcd(L,M)$ as the operator of maximal order which divides both $L$ and $M$ (this operator is defined modulo a multiplication by an element from $\Bbbk$). The Euclid algorithm \cite{ore}*{Theorem 4} guarantees that for any $L,M$ there exist $U,V\in\Bbbk[\d]$ such that
\begin{equation}\label{gcd}
UL+VM=\gcd(L,M),\qquad \ord U<\ord M,\ \ord V<\ord L.
\end{equation}
A less direct computation allows to construct the \emph{least common multiple} $\lcm(L,M)$ which is by definition the smallest order operator divisible by both $L$ and $M$ (and also defined modulo a nonzero coefficient from $\Bbbk$). Indeed, consider the operators $M,\d M,\d^2M,\dots,\d^{n}M$ modulo $L$, i.e., their remainders after division by $L$, $n=\ord L$. Since all these $n+1$ remainders are of order $\le n-1$, they must be linear dependent over $\Bbbk$, that is, a certain linear combination $(c_0\d^n +\cdots+c_{n-1}\d+ c_{n})M=PM$ must be divisible by $L$: $PM=QL$, $\ord P\le \ord L$, $\ord Q\le \ord M$. There is an explicit formula expressing $\lcm(L,M)$ through the operators appearing in the Euclid's algorithm, see  \cite{ore}*{Theorem 8}.

\subsection{Algebra vs.~analysis}
Denote by $\W$ the local Weyl algebra $\Bbbk[\d]$ in the case where $\Bbbk=\M$ is the differential field of meromorphic germs.

If an operator $L$ is divisible by $M$ in $\W$, then their spaces of solutions $Z_L$, resp., $Z_M$, are subject to the inclusion $Z_M\subseteq Z_L$. Conversely, if for two operators $L,M\in\W$ we have $Z_M\subseteq Z_L$, then $L$ is divisible by $M$. Indeed, otherwise the remainder of division of $L$ by $M$ would be an operator of order strictly less than $\ord M$, whose solutions form the space of superior dimension $\dim Z_M=\ord M$.  In terms of solutions,
\begin{equation}\label{gcd-lcm}
\begin{aligned}
D=\gcd(L,M)&\iff Z_D=Z_L\cap Z_M,
\\
P=\lcm(L,M)&\iff Z_P=Z_L+Z_M
\end{aligned}
\end{equation}
(the sum of linear subspaces in $\O(\C,t_0)$ is assumed).

Thus two equations $Lu=0$ and $Mv=0$ are of the same type in the sense of \secref{sec:gauge}, if their order is the same and there exists an operator $H\in W$ which maps $Z_L$ to $Z_M$ isomorphically: for any $u$ such that $Lu=0$, the function $v=Hu$ is annulled by $M$.

\begin{Def}\label{def:we}
Two operators $L,M\in\W$ of the same order $n$ are called \emph{Weyl equivalent} (or Weyl conjugate), if there exist two operators $H,K\in\W$ of order $\le n-1$, such that
\begin{equation}\label{st}
MH=KL,\qquad \gcd(L,H)=1,\qquad \ord H,K<\ord L,M.
\end{equation}
The operator $H$ is said to be the \emph{conjugacy} between $L$ and $M$.
\end{Def}

\begin{Rem}
\Ore.~Ore uses the notation $M=\lcm(L,H)H^{-1}=HLH^{-1}$ to denote the fact of conjugacy to stress its resemblance with the ``similarity'' in the noncommutative algebra $\W$. It has its mnemonic advantages, although the formal construction of (noncommutative) field of ratios for $\W$ requires additional efforts \cite{ore}*{p.~487}.
\end{Rem}

We will abbreviate the words ``Weyl equivalence'' (resp., conjugacy) to $\W$-equivalence (conjugacy) for simplicity.

\begin{Thm}\label{thm:we}
$\W$-conjugacy is indeed an equivalence relation\textup: it is reflexive, transitive and symmetric.
\end{Thm}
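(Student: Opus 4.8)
The plan is to dispose of reflexivity at once and to reduce symmetry and transitivity to the analytic dictionary \eqref{gcd-lcm} between divisibility in $\W$ and inclusions of solution spaces. For reflexivity I would take $H=K=1$, the unit operator of order $0<n$: then \eqref{st} becomes the trivial identity $L\cdot1=1\cdot L$ and $\gcd(L,1)=1$, so every operator of order $n\ge1$ is $\W$-equivalent to itself.

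The engine for the other two properties will be the elementary remark --- which I would isolate as a lemma --- that if $L,M\in\W$ have the same order $n$ and $H\in\W$ satisfies $MH=KL$ for some $K$ together with $\gcd(L,H)=1$ and $\ord H<n$, then $H$ induces a $\C$-linear \emph{isomorphism} $Z_L\to Z_M$. Indeed, for $u\in Z_L$ we have $M(Hu)=K(Lu)=0$, so $H(Z_L)\subseteq Z_M$; the kernel of $H$ restricted to $Z_L$ equals $Z_L\cap Z_H=Z_{\gcd(L,H)}=Z_1=0$ by \eqref{gcd-lcm}; and since $\dim_\C Z_L=\dim_\C Z_M=n$, an injective $\C$-linear map between them is onto. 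Alongside this I would use repeatedly the converse divisibility criterion established above: if $Z_A\subseteq Z_B$ then $B$ is divisible by $A$ (and $\ord A\le\ord B$ is then automatic). This is what lets one recover the partner operator $K$ once the conjugacy $H$ has been produced.

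For symmetry I start from a witness $(H,K)$ of the $\W$-equivalence of $L$ to $M$. Since $\gcd(L,H)=1$, the Euclid algorithm \eqref{gcd} supplies $U,V\in\W$ with $UL+VH=1$ and $\ord V<\ord L=n$ (and $V\ne0$, since $UL=1$ is impossible for $n\ge1$). Evaluating this identity on $u\in Z_L$ gives $u=V(Hu)$, so $V$ is a left inverse of $H$ on $Z_L$ and hence, by the lemma, induces the inverse isomorphism $Z_M\to Z_L$. Consequently $Z_M\cap Z_V=0$, i.e. $\gcd(M,V)=1$, while $L(Vw)=0$ for every $w\in Z_M$, i.e. $Z_M\subseteq Z_{LV}$; by the divisibility criterion $LV=K'M$ with $\ord K'=\ord(LV)-\ord M=\ord V<n$. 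Then $(V,K')$ is a witness of the $\W$-equivalence of $M$ to $L$.

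For transitivity I take witnesses $(H_1,K_1)$ of the $\W$-equivalence of $L$ to $M$ and $(H_2,K_2)$ of that of $M$ to $N$; by the lemma $H_1\colon Z_L\to Z_M$ and $H_2\colon Z_M\to Z_N$ are isomorphisms, hence so is $H_2H_1\colon Z_L\to Z_N$. I expect the one genuine obstacle to be that $\ord(H_2H_1)$ can be as large as $2n-2\ge n$, so $H_2H_1$ need not be an admissible conjugacy; this is removed by division with remainder \eqref{divrem}, writing $H_2H_1=QL+H$ with $\ord H<n$. Since $QL$ annihilates $Z_L$, the operator $H$ agrees with $H_2H_1$ on $Z_L$; therefore $H|_{Z_L}$ is still injective, giving $Z_L\cap Z_H=0$ and $\gcd(L,H)=1$, and $H(Z_L)=(H_2H_1)(Z_L)=Z_N$, so $Z_L\subseteq Z_{NH}$. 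The divisibility criterion then produces $K$ with $NH=KL$ and $\ord K=\ord H<n$, so $(H,K)$ witnesses the $\W$-equivalence of $L$ and $N$, which would complete the proof. The remaining points are purely formal bookkeeping on orders.
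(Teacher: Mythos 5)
Your proof is correct, and its reflexivity and transitivity steps essentially coincide with the paper's: the same choice $H=K=1$, the same reduction of $H_2H_1$ modulo $L$, and the same solution-space argument that the remainder still acts injectively on $Z_L$ (you are a bit more explicit than the paper in recovering the partner $K$ via the divisibility criterion, where the paper implicitly takes $K=K_2K_1-NQ$). The genuine divergence is in the symmetry step. The paper proves it by Berest's purely algebraic argument inside $\W$: starting from the B\'ezout identity $UL+VH=1$ of \eqref{gcd}, multiply by $L$ on the left, observe that $LVH$ is a common right multiple of $H$ and $L$, hence divisible by $\lcm(H,L)=MH$, and cancel $H$ using the absence of zero divisors, leaving $\gcd(V,M)=1$ as an exercise. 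You instead run the same B\'ezout identity through the analytic dictionary \eqref{gcd-lcm}: evaluating $UL+VH=1$ on $Z_L$ shows $V$ is the literal inverse of the isomorphism $H\colon Z_L\to Z_M$, which yields $\gcd(V,M)=1$ for free and produces the partner operator from the inclusion $Z_M\subseteq Z_{LV}$ via the divisibility criterion. The trade-off is worth noting: your route is more transparent and settles the gcd condition explicitly, but it leans on the facts that $\dim_\C Z_L=\ord L$ and that divisibility is equivalent to inclusion of solution spaces, so it is tied to the analytic setting $\Bbbk=\M$ (and would not transfer, say, to the formal setting of $\^\F$ used later in the paper, where no solution spaces are available); the paper's algebraic cancellation argument works over any differential field admitting the division algorithm, which is why the authors phrase it that way.
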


\begin{proof}
It is obvious that this relationship is reflexive (suffices to choose $H=K=1$). To prove its transitivity, assume that $L_1$ is $\W$-conjugate with $L_2$, and $L_2$ with $L_3$. This means that there exist operators $H_i,K_i\in\W$, $i=1,2$, of order $\le n-1$ such that $L_2H_1=K_1L_1$ and $L_3H_2=K_2L_2$. Then $L_3H_2H_1=K_2K_1L_1$. To produce a pair $(H',K')$ conjugating $L_1$ with $L_3$, it suffices to define $H'=H_2H_1\bmod L_1$: the order of this remainder will not exceed $n-1$ by construction. One has to check that $\gcd(H',L_1)=1$, but this is obvious: if $u$ is a nontrivial solution of $L_1u=0$ and $\gcd(H_1,L_1)=1$, then $v=H_1u$ is a nontrivial solution of $L_2v=0$, hence $H_2v\ne0$. Replacing $H_2H_1$ by its remainder modulo $L_1$ cannot change the fact that $H_2H_1u\ne0$ for any solution of $L_1u=0$.

The symmetry is less trivial, see \cite{ore}*{Theorem 13}. For the reader's convenience we provide here a short direct proof due to Yu.~Berest. It is convenient to formulate it as a separate lemma.
\end{proof}

\begin{small}
\begin{Lem}\label{lem:berest}
For any two operators $L,M$ satisfying \eqref{st}, there exists a pair of operators $V,W\in\W$ such that $LV=WM$ and $\gcd(V,M)=1$.
\end{Lem}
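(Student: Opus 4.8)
The plan is to work on the level of solution spaces, where the algebraic relations become statements about linear subspaces of $\O(\C,t_0)$, as in \eqref{gcd-lcm}. The hypothesis \eqref{st} says that $H\:Z_L\to Z_M$ (the map $u\mapsto Hu$) is well-defined and, because $\gcd(L,H)=1$, injective; since $\dim Z_L=\dim Z_M=n$, it is in fact a linear isomorphism $Z_L\xrightarrow{\sim}Z_M$. I want to produce a single operator $V\in\W$ of order $<n$ whose associated map realizes (a multiple of) the inverse isomorphism $Z_M\to Z_L$, together with a companion operator $W$ making $LV=WM$ hold; and I must check $\gcd(V,M)=1$.

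First I would construct $V$ as a suitable left ``quotient'': consider the least common multiple $P=\lcm(H,M)$, so that $Z_P=Z_H+Z_M$. Since $Z_H\cap Z_L=0$ (that is $\gcd(H,L)=1$) and $H$ maps $Z_L$ isomorphically onto $Z_M$, one checks that $Z_H+Z_M=Z_H\oplus Z_M$ has dimension $2n-\dim(Z_H\cap Z_M)$; the key point is that $Z_L$ is a complement to $Z_H$ inside $Z_P$ of the right dimension, so there is a projection of $Z_P$ onto $Z_L$ along $Z_H$. I would realize this projection operationally: write $P=WM$ for some $W\in\W$ (legitimate because $P$ is divisible by $M$ by definition of $\lcm$), and simultaneously $P=V'L + (\text{lower order terms})$ — more precisely, use the division algorithm \eqref{divrem} to write $P = \tilde V L + V$ with $\ord V<\ord L=n$. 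Applying both expressions to a solution $w$ of $Mw=0$: the relation $P=WM$ gives $Pw=0$, while $P=\tilde V L+V$ gives $0=Pw=\tilde V(Lw)+Vw$. This does not immediately give $LVw=0$, so I would instead run the argument the other way around, starting from the identity I actually want.

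Here is the cleaner route I would take. Apply the division algorithm to write $LH = \lcm(L,H)$-type relations; but the most economical approach is: since $\gcd(L,H)=1$, by \eqref{gcd} there exist $U,\tilde W\in\W$ with $UL+\tilde W H=1$, $\ord U<\ord H\le n-1$, $\ord \tilde W<\ord L=n$. Multiply the hypothesis $MH=KL$ on the right by a candidate for $V$ and manipulate; alternatively, from $MH=KL$ and the ``B\'ezout'' identity one extracts $V,W$ directly. Concretely: set $V$ to be the remainder of $\tilde W$-times-something modulo $M$. The clean statement is that $V := $ (a normalization of) the operator induced by the inverse map $Z_M\to Z_L$ exists as an element of $\W$ of order $<n$, because any $\C$-linear map between two $n$-dimensional solution spaces of operators in $\W$ is induced by an operator of order $\le n-1$ (this is exactly the mechanism used to derive the transformed equation in \secref{sec:gauge}, applied to the interpolation problem of prescribing the images of a basis). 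Once $V$ is in hand with the map $Z_M\to Z_L$, the composition $L\circ V$ kills $Z_M$, hence $LV$ is divisible by $M$ on the right (by the solution-space criterion preceding Definition~\ref{def:we}), giving $LV=WM$ for some $W$; and $\gcd(V,M)=1$ holds because $V$ induces an \emph{injective} map on $Z_M$ (it is the inverse of an isomorphism), so no nontrivial solution of $Mw=0$ is sent to zero.

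The main obstacle is the order bound $\ord V<n$. Prescribing an arbitrary linear map on an $n$-dimensional solution space is a priori only realizable by an operator of order $\le n-1$ when one is careful: one must check that the interpolation operator one builds does not secretly have order $\ge n$, and that it is not identically zero. I expect this to follow from a dimension count — the space of operators of order $\le n-1$ modulo those divisible by $M$ is $n$-dimensional (it injects, via the solution map, into $\operatorname{Hom}(Z_M,\O)$ landing in an $n$-dimensional target once we fix that the image lies in $Z_L$), matching the $n$-dimensional space $\operatorname{Hom}(Z_M,Z_L/\text{nothing})$ after we impose the $n^2$-vs-$n^2$ bookkeeping correctly — but getting the count exactly right, rather than off by the dimension of some $\gcd$, is the delicate step, and it is precisely where the hypothesis $\gcd(L,H)=1$ must be used a second time. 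I would handle it by invoking $P=\lcm(L,H)$, noting $\ord P=2n$ (again because $Z_L\cap Z_H=0$), and reading off $V$ and $W$ from the two factorizations $P=WM+(\ldots)$ and $P$-divisible-by-$L$, tracking orders through \eqref{divrem} and \eqref{gcd}.
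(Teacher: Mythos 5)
Your overall strategy---realize the inverse of the isomorphism $H\:Z_L\to Z_M$ by an operator $V\in\W$, conclude $LV$ annihilates $Z_M$ hence $LV=WM$ by the divisibility criterion, and get $\gcd(V,M)=1$ from injectivity---is the right picture, but the step on which everything hinges is not established and, as you state it, is false. It is not true that ``any $\C$-linear map between two $n$-dimensional solution spaces of operators in $\W$ is induced by an operator of order $\le n-1$'': the interpolation procedure $(h_n,\dots,h_1)=(f_1,\dots,f_n)J^{-1}(t)$ produces single-valued coefficients only when the prescribed map intertwines the two monodromy operators (here it does, since $H$ has meromorphic coefficients), and even then single-valuedness alone does not place the coefficients in $\M$: the Lemma is stated for arbitrary, possibly irregular, operators in $\W$, whose solutions and Wronskians need not grow moderately, so the Cramer-rule quotients can have essential singularities. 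The only place in the paper where this interpolation argument is run, moderate growth (the Fuchs/regularity hypothesis) is invoked precisely to pass from ``single-valued'' to ``meromorphic''; you have no such hypothesis here. Your fallback via $P=\lcm(L,H)$ is also off: $\ord P=\ord L+\ord H\le 2n-1$, not $2n$, and the claim that $Z_L$ is a complement to $Z_H$ inside $Z_{\lcm(H,M)}$ does not make sense in general, since $Z_L$ need not lie in $Z_H+Z_M$.

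The ingredient you already wrote down closes the gap at once, and this is essentially what the paper does. From $\gcd(L,H)=1$ and \eqref{gcd} take $U,V\in\W$ with $UL+VH=1$ and $\ord V<\ord L=n$; applied to $u\in Z_L$ this gives $VHu=u$, so this $V$ \emph{is} the operator realizing the inverse map, with the right order bound and meromorphic coefficients for free---no interpolation, no monodromy, no growth estimates. The paper then avoids solution spaces altogether: multiplying $UL+VH=1$ on the left by $L$ shows $LVH=(1-LU)L$ is a common right multiple of $H$ and $L$, hence divisible by $\lcm(H,L)=MH=KL$, so $LVH=WMH$; cancelling $H$ (no zero divisors in $\W$) yields $LV=WM$, and $\gcd(V,M)=1$ follows since $VHu=u\ne 0$ for $0\ne u\in Z_L$. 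This purely algebraic route has the additional advantage of being valid over any differential field (in particular in the formal setting used later), where your solution-space argument is unavailable.
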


\begin{proof}
By \eqref{gcd}, the condition $\gcd(L,H)=1$ implies that there exist $U,V\in \W$ such $UL+VH=1$. Multiplying this identity by $L$ from the left, we see that $(LU-1)L=LVH$, that is, the operator $Q$, expressed by each side of the identity, is divisible by both $H$ and $L$. This means that the operator $LVH$ is divisible by $P=\lcm(H,L)$, which in turn has two representations, $P=MH=KL$ as in \eqref{st}. The last divisibility means that $LVH=WP=WMH$ in $\W$. Yet since $\W$ has no zero divisors (the leading coefficient of any composition is nonzero), we can cancel $H$ and arrive at the identity $LV=WM$. It is a simple exercise to see that $\gcd(V,M)=1$.\par
\end{proof}
\end{small}

\subsection{Nonsingular operators}
An operator $L\in\W$ of the form \eqref{ldo} is referred to as \emph{nonsingular}, if all its coefficients are holomorphic, $a_i\in\O(\C,0)$, and the leading coefficient is invertible, $a_0(0)\ne0$. Nonsingular operators can be reduced by the transformation \eqref{companion} to a holomorphic (nonsingular) system of first order equations. An immediate conclusion is that the corresponding equation $Lu=0$ has only holomorphic solutions, and a fundamental system of solutions $\{u_k\}_{k=0}^n$ can always be chosen so that $u_k(t)=t^k+\cdots$ where the dots stand for terms of order greater than $k$.

\subsection{Fuchsian operators}
There exists another special subclass of linear operators $L\in\W$ with the property that the respective linear equations $Lu=0$ enjoy a certain regularity, namely, all their solutions grow moderately when approaching the singular point at the origin. Unlike the general linear systems \eqref{ls}, such operators admit precise algebraic description. It can be given in several equivalent forms.

Note that together with the ``basic'' derivation $\d$ any other element $a\d\in\W$ is also a derivation of the field $\M$ ($\C$-linear self-map satisfying the Leibniz rule). It can be used as the generator of the algebra $\W$. We will be mostly interested in the \emph{Euler derivation} $\eu=t\d\in\W$ with the commutation rule
\begin{equation}\label{euler-der}
\eu=t\cdot\d, \qquad \eu \cdot t^m=t^m\cdot (\eu+m),\quad\forall m\in\Z,
\end{equation}
cf.~with \eqref{leibniz}. Though the derivations $\d$ and $\eu$ are very simply related, their algebraic nature is radically different. Restricted on the finite-dimensional subspace of polynomials of any finite degree, the standard derivation $\d$ is nilpotent, while the Euler derivation is semisimple ($\eu$ is diagonal in the monomial basis, $\eu (t^m)=mt^m$).

For any polynomial $w\in\C[\eu]$ in the variable $\eu$ denote by $w\sh j$, $j\in\Z$, the shift of the argument:
\begin{equation}\label{shift}
 w\mapsto w\sh j,\quad w\sh j(\eu)=w(\eu+j),\qquad j\in\Z.
\end{equation}
This operator preserves the degree of the polynomial, and using it one can rewrite the commutation rule \eqref{euler-der} as follows,
\begin{equation}\label{shift-1}
 \forall w\in\C[\eu],\ \forall j\in\Z,\qquad wt^j=t^jw\sh j.
\end{equation}

Substituting $\d=t^{-1}\eu$ and re-expanding terms, any operator $L\in\W$ can be represented under the form
\begin{equation}\label{ff}
L=r_0\eu^n+r_1\eu^{n-1}+\cdots+r_{n-1}\eu+r_n,\qquad r_i\in\M,\ r_0\ne 0.
\end{equation}

\begin{Def}
An operator $L$ is called \emph{Fuchsian}, if in the representation \eqref{ff} all coefficients $r_i$ are holomorphic and the leading coefficient $r_0$ is invertible (nonvanishing):
\begin{equation}\label{fuchs}
r_0,r_1,\dots, r_n\in\O(\C^1,0),\qquad r_0(0)\ne 0.
\end{equation}
The operator is pre-Fuchsian, if it has a form $hL$ with any nonzero $h\in\M$; without loss of generality one may assume that $h=t^k$, $k\in\Z$.

An operator is called \emph{Eulerian}, if all coefficients $r_0,\dots,r_n\in\C$ are constant.
\end{Def}

\begin{Rem}
In the classical literature the notion of Fuchsian operators is not defined, only the notion of a (homogeneous) Fuchsian equation of the form $Lu=0$ is discussed. Clearly, two operators $L\in\W$ and $hL$, $h\in\M$, define the same homogeneous equation. For operators written in the form \eqref{ff}, the corresponding homogeneous equation will be Fuchsian if and only if all ratios $r_i/r_0\in\M$ are actually holomorphic at $t=0$ for all $i=1,\dots,n$. Our choice may seem to be artificial, yet it is justified by subsequent computations.
\end{Rem}

We will denote by $\F\subset\W$ the set of all Fuchsian operators. It is convenient to assume that holomorphically invertible germs and meromorphic germs belong in $\F$, resp., pre-$\F$ as ``differential operators of zero order''.

A Fuchsian differential equation $Lu=0$ with $L$ as in \eqref{ff} can be reduced to a Fuchsian system in the sense \eqref{sec:sys} by slightly modifying the computation \eqref{companion}: one has to introduce the new variables as follows, $x_1=u$, and then
\begin{equation}\label{f-companion}
\eu x_k=x_{k+1},\ k=1,\dots, n-1,\quad \eu x_{n}=-r_0^{-1}(r_1 x_{n-1}+\cdots+r_n x_1)
\end{equation}
(recall that $r_0$ is invertible hence $r_0^{-1}\in\O$), or in the matrix form, $\eu x=Rx$, with the holomorphic matrix $R\in\Mat(n,\O)$ of coefficients. This computation explains the relation of two Fuchsian objects of different nature. However, unlike the case of systems, in the case of scalar equations the Fuchsian condition is not only sufficient, but also necessary for the regularity (moderate growth of solutions).

\begin{Thm}[L.~Fuchs (1868), see \cite{thebook}*{Theorem 19.20}]
The operator $L\in\W$ is pre-Fuchsian if and only if all solutions of the equation $Lu=0$ and all their derivatives grow at most polynomially in any sector with the vertex at the origin in the sense \eqref{moderate}.\qed
\end{Thm}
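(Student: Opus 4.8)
The plan is to prove the two implications by quite different means. \emph{Sufficiency} (pre-Fuchsian $\Rightarrow$ moderate growth): since $L$ and $hL$ have the same solutions for $0\ne h\in\M$, I may assume $L$ is Fuchsian and, dividing by its invertible leading coefficient, that $L=\eu^n+r_1\eu^{n-1}+\cdots+r_n$ with all $r_i\in\O$. The substitution \eqref{f-companion} rewrites $Lu=0$ as $\eu x=Rx$ with $R\in\Mat(n,\O)$, i.e.\ as the Fuchsian system $\d x=t^{-1}Rx$, which is regular by Sauvage's theorem \cite{thebook}*{Theorem 16.10}; hence its fundamental matrix satisfies \eqref{moderate}, so $u,\eu u,\dots,\eu^{n-1}u$ all grow moderately in every sector. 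Feeding these into $\eu^n u=-(r_1\eu^{n-1}u+\cdots+r_nu)$ and applying $\eu$ repeatedly (the $r_i$ and their $\eu$-derivatives being bounded) yields moderate growth of every $\eu^k u$, and since $\d^k=t^{-k}\,\eu(\eu-1)\cdots(\eu-k+1)$, of every $\d^k u$ as well.

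For \emph{necessity} (moderate growth $\Rightarrow$ pre-Fuchsian), replacing $L$ by $a_0^{-1}L$ changes neither hypothesis, so I normalize $L=\d^n+a_1\d^{n-1}+\cdots+a_n$ with $a_i\in\M$; a triangular, constant-coefficient comparison of the $\d$- and $\eu$-forms of $L$ shows that in this normalization ``pre-Fuchsian'' is equivalent to: each $a_k$ has a pole of order $\le k$ at the origin (cf.\ the Remark following the definition of Fuchsian operators). I would induct on $n$. The monodromy $\Delta$ of \eqref{monodromy} has an eigenvector, so there is $u_1\in Z_L\ssm\{0\}$ with $\Delta u_1=e^{2\pi\i\l}u_1$; then $t^{-\l}u_1$ is single-valued near $0$ and, having moderate growth, extends meromorphically across it, whence (absorbing an integer into $\l$) $u_1=t^{\l}\f$ with $\f\in\O$, $\f(0)\ne0$. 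This particular $u_1$ is chosen for two reasons: $1/u_1=t^{-\l}\f^{-1}$ still has moderate growth, and $p:=u_1'/u_1=\l t^{-1}+\f'/\f$ is \emph{meromorphic with a pole of order $\le1$}, so that $q:=tp=\l+t\f'/\f$ is holomorphic.

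For the inductive step, conjugation by $u_1$ is legitimate in $\W$ because $u_1^{-1}\d u_1=\d+p$ has meromorphic coefficients; the conjugate $\~L:=u_1^{-1}Lu_1$ keeps leading coefficient $1$ and annihilates the constant $1$, so $\d$ divides it on the right: $\~L=Q\d$ with $\ord Q=n-1$ and leading coefficient $1$. The solution space of $Q$ is $\d(Z_{\~L})=\{(u/u_1)':u\in Z_L\}$, and since $1/u_1$, the functions $u\in Z_L$ and all their $\d$-derivatives grow moderately, so do all solutions of $Q$ and all their derivatives; the inductive hypothesis then gives $Q=\d^{n-1}+b_1\d^{n-2}+\cdots+b_{n-1}$ with each $b_k$ of pole order $\le k$, whence $\~L=Q\d=\d^n+b_1\d^{n-1}+\cdots+b_{n-1}\d$ has its coefficient $\~a_k$ of $\d^{n-k}$ of pole order $\le k$ for every $k$. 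It remains to transport this back through the conjugation: using $u_1\d u_1^{-1}=\d-p$ and $t(\d-p)=\eu-q$ one computes $L=u_1\~Lu_1^{-1}=\sum_i\~a_i(\d-p)^{n-i}=t^{-n}\sum_i(t^i\~a_i)\,\Pi_{n-i}$, where $\Pi_m=\prod_{j=0}^{m-1}(\eu-q-j)$ is a composition of $m$ Fuchsian operators with leading coefficient $1$; since $t^i\~a_i\in\O$ for all $i$, the operator $\sum_i(t^i\~a_i)\Pi_{n-i}$ is Fuchsian, and therefore $L=t^{-n}\cdot(\text{Fuchsian})$ is pre-Fuchsian.

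The hard part is this inductive step in the necessity direction. The reduction of order is carried out by the \emph{non-meromorphic} multiplier $u_1=t^\l\f$, and one must verify both that $u_1^{-1}Lu_1$ is still an element of $\W$ and that the Fuchsian structure --- which is visible only through the Euler derivation $\eu=t\d$ --- survives the conjugation and the removal of the factor $\d$. The decisive fact that makes both things work is that $t\,(u_1'/u_1)$ is holomorphic when $u_1=t^\l\f$ with $\f$ a holomorphic unit; this is itself a (very mild) consequence of moderate growth. The remaining ingredients --- the existence of such a $u_1$, the reduction of order, and the bookkeeping of pole orders --- are routine.
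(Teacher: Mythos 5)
Your proof is correct. Note that the paper itself offers no argument for this statement --- it is quoted as Fuchs' classical theorem with a reference to \cite{thebook}*{Theorem 19.20} --- so there is no internal proof to compare against; what you give is the classical argument, and it is fully consistent with the toolkit the paper does use. The sufficiency half is exactly the paper's own observation: after normalizing the $\eu$-form, the substitution \eqref{f-companion} produces a Fuchsian first-order system, Sauvage's theorem gives \eqref{moderate} for $u,\eu u,\dots,\eu^{n-1}u$, and your recovery of all $\d^k u$ via $t^k\d^k=\eu(\eu-1)\cdots(\eu-k+1)$ is fine. The necessity half reproduces, in $\d$-language, the same device the paper employs in Lemma~\ref{lem:factor}: a monodromy eigenfunction $u_1=t^\l\f$ with $\f$ a holomorphic unit (single-valuedness of $t^{-\l}u_1$ plus moderate growth gives meromorphy), so that $p=u_1'/u_1$ has pole order $\le1$ and $q=tp\in\O$. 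I checked the points you flag as delicate and they hold: $u_1^{-1}Lu_1=\sum_i a_i(\d+p)^{n-i}\in\W$; $\~L\,1=0$ yields the right factor $\d$ with monic quotient $Q$; $Z_Q=\d(Z_{\~L})$ has dimension $n-1$ because the constants lie in $Z_{\~L}$; the solutions $(u/u_1)'$ of $Q$ and all their derivatives inherit moderate growth since $u_1^{-1}=t^{-\l}\f^{-1}$ and all its derivatives grow moderately, so the inductive hypothesis (which, matching the statement, includes derivatives) applies; and the transport back rests on the identity $t^m(\d-p)^m=(\eu-q)(\eu-q-1)\cdots(\eu-q-m+1)$, which is valid --- the factors commute, differing only by constants --- whence $t^nL=\sum_i(t^i\~a_i)\,\Pi_{n-i}$ has holomorphic $\eu$-coefficients with $\eu^n$-coefficient equal to $1$, i.e.\ is Fuchsian, exactly as claimed. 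The preliminary equivalence (for monic $L$, pre-Fuchsian $\iff$ the coefficient $a_k$ has pole order $\le k$) is indeed a triangular constant-coefficient comparison of the $\d$- and $\eu$-forms, and the base case of the induction is the $n=1$ instance of your eigenfunction computation, so the argument is complete.
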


\subsection{First results on $\W$-classification}
The initial results on $\W$-equivalen\-ce are completely parallel to $\G_h$-classification of nonsingular systems and $\G$-classification of regular systems: even the ideas of the proofs remain the same.

\begin{Thm}
A nonsingular operator is $\W$-conjugate to the operator $M=\d^n$ by a \emph{nonsingular} operator $H$ of order $n-1$.
\end{Thm}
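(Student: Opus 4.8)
The plan is to exploit the description of the solution space of a nonsingular operator given just above the statement: the equation $Lu=0$ has only holomorphic solutions, and one can pick a fundamental system $\{u_k\}_{k=0}^{n-1}$ with $u_k(t)=t^k+\cdots$. The target operator $M=\d^n$ has solution space exactly the polynomials of degree $\le n-1$, with the standard basis $\{t^k\}_{k=0}^{n-1}$. So what I want is an operator $H$ of order $\le n-1$ that carries the basis $u_k$ to the basis $t^k$ (or at least carries $Z_L$ isomorphically onto $Z_M$); then by the discussion of \eqref{gcd-lcm} and the paragraph preceding Definition~\ref{def:we}, $H$ realizes a $\W$-conjugacy, and I must separately check $\gcd(L,H)=1$ and that $H$ is nonsingular.

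First I would construct $H$ explicitly. Since $u_0,\dots,u_{n-1}\in\O(\C,0)$ are linearly independent over $\C$, their Wronskian $w(t)=\det\|\d^j u_k\|$ is a holomorphic germ with $w(0)\ne0$ (the leading term, from the triangular shape $u_k=t^k+\cdots$, is a nonzero constant). For each $k$ there is a unique $\Bbbk$-linear differential operator $P_k=\sum_{j=0}^{n-1}p_{k,j}\d^j$ of order $\le n-1$ with $P_k u_l=\delta_{kl}$ for all $l$: this is just Cramer's rule applied to the invertible Wronskian matrix, so the coefficients $p_{k,j}$ are holomorphic germs (ratios of holomorphic functions by $w$, which is a unit). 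Now set $H=\sum_{k=0}^{n-1} t^k P_k$; then $H u_l=\sum_k t^k\delta_{kl}=t^l$, so $H(Z_L)=Z_M$ and $H$ is manifestly an operator of order $\le n-1$ with holomorphic coefficients. Because $H$ maps a basis of $Z_L$ bijectively to a basis of $Z_M$, no nonzero solution of $L$ lies in $\ker H$, which is exactly $\gcd(L,H)=1$. It follows from \eqref{gcd-lcm} that $\lcm(L,H)$ has solution space $Z_L+Z_M$ of dimension $n$ (both equal to $Z_L$ identified via $H$), hence $\lcm(L,H)$ has order $n$ and, being divisible by $L$ which also has order $n$, equals $L$ up to a left unit factor; writing $\lcm(H,L)=MH=KL$ gives the required relation \eqref{st}. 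Actually it is cleaner to argue directly: $H$ sends $Z_L$ onto $Z_M=Z_{\d^n}$, so $\d^n H$ annihilates all of $Z_L$, hence $\d^n H=KL$ for some $K$ with $\ord K=\ord(\d^n H)-\ord L=(n+(n-1))-n=n-1$; this is \eqref{st} with $M=\d^n$.

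The remaining point is that $H$ is \emph{nonsingular}, i.e.\ its order is exactly $n-1$ and its leading coefficient $p_{\,\cdot,n-1}$ is a unit in $\O$. The leading coefficient of $H$ in $\d$ is $\sum_k t^k p_{k,n-1}$; since $p_{k,n-1}$ is (up to the unit $w$) the cofactor of $\d^{n-1}u_k$ in the Wronskian matrix, evaluating at $t=0$ using $u_k=t^k+\cdots$ one finds the leading coefficient has the form $p_{0,n-1}(0)+O(t)$ with $p_{0,n-1}(0)\ne0$ — this is the step that needs a small explicit computation, and it is where I expect to actually have to look at the Wronskian of the triangular basis rather than wave hands. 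An alternative, perhaps slicker, route to nonsingularity: reduce $L$ to a nonsingular first-order system via \eqref{companion}, use the holomorphic gauge classification (Table~1, first row) to trivialize it by a holomorphic $H_0\in\GL(n,\O)$, and read off from the first row of $H_0$ an operator intertwining $L$ with $\d^n$; keeping track of normalizations shows the leading coefficient is a unit. I would use whichever of the two is shorter to write; the Wronskian/Cramer construction is more self-contained, so I would present that and isolate the ``leading coefficient is a unit'' verification as the one genuine computation. The main obstacle, then, is not the existence of the conjugacy — that is essentially formal given the known solution spaces — but certifying that the conjugating operator can be taken nonsingular of the exact order $n-1$, which forces one to track the lowest-order behavior of the coefficients $p_{k,j}$ at $t=0$.
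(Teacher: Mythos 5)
Your overall scheme (build $H$ of order $\le n-1$ mapping $Z_L$ onto the polynomials, then get $\d^nH=KL$ from the divisibility criterion and $\gcd(L,H)=1$ from injectivity of $H$ on $Z_L$) is sound and close in spirit to the paper, which performs the same interpolation in the opposite direction, solving $Hv_k=u_k$ against the explicit triangular companion matrix of $1,t,\dots,t^{n-1}$. The genuine gap is exactly the step you deferred, and it fails: with your normalization $Hu_k=t^k$ the leading coefficient of $H$ \emph{must} vanish at $t=0$, so your $H$ is not nonsingular. Indeed, for any operator $H=\sum_{j=0}^{n-1}h_j\d^j$ with holomorphic coefficients and any germ $u$ vanishing at $0$ to order exactly $n-1$, one has $(Hu)(0)=h_{n-1}(0)\,\d^{n-1}u(0)$, since all lower-order derivatives of $u$ vanish at the origin. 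Applying this to $u_{n-1}=t^{n-1}+\cdots$ and to your requirement $Hu_{n-1}=t^{n-1}$, which vanishes at $0$ for $n\ge2$, forces $h_{n-1}(0)=0$. Equivalently, in your Cramer computation $p_{0,n-1}$ is, up to the unit $\pm w^{-1}$, the minor of the Wronskian matrix obtained by deleting the row of highest derivatives and the column of $u_0$; the top row of that minor is $(u_1,\dots,u_{n-1})$, which vanishes at $t=0$, so $p_{0,n-1}(0)=0$, the opposite of what you predicted. (In the degenerate case $L=\d^n$ your $H$ even collapses to the identity, of order $0$.) A concrete test case: $L=\d^2-1$, $u_0=\cosh t$, $u_1=\sinh t$ gives $H=(t\cosh t-\sinh t)\d+(\cosh t-t\sinh t)$, whose leading coefficient vanishes to third order.

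The fix is cheap and keeps your construction: match the bases so that the image of $u_{n-1}$ is a unit at the origin, e.g.\ take $H=\sum_k t^{n-1-k}P_k$, so $Hu_k=t^{n-1-k}$. Since the Wronskian matrix at $t=0$ is triangular with diagonal $(0!,1!,\dots,(n-1)!)$, so is its inverse; hence $p_{k,n-1}(0)=0$ for $k<n-1$ and $p_{n-1,n-1}(0)=1/(n-1)!$, and the leading coefficient of this $H$ equals $1/(n-1)!\ne0$ at $t=0$, so $H$ is nonsingular of order exactly $n-1$. The rest of your argument ($\gcd(L,H)=1$, $\d^nH=KL$ with $\ord K\le n-1$) then goes through verbatim. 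Note that the same bookkeeping --- which solution is paired with which polynomial --- is the crux in the paper's direction as well: the nonvanishing of the leading coefficient is not automatic, but hinges on sending the solution with the highest order of vanishing to a polynomial that does not vanish at the origin (your alternative route through the gauge trivialization of the companion system would face the same normalization issue).
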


\begin{proof}
Any nonsingular equation $Lu=0$ of order $n$ always admits $n$ linear independent solutions of the form $u_k(t)=t^{k-1}(1+\cdots)$, $k=1,\dots, n$. Indeed, one should look for solutions of the companion system \eqref{companion}  with a suitable initial condition $x_k(0)=1$, $x_j(0)=0$ for all $j\ne k$.

A linear operator $H$ transforming solutions $v_k=t^{k-1}$ of the equation $\d^n=0$ to solutions of the equation $Lu=0$ by the formulas \eqref{optrans} can be obtained by the method of indeterminate coefficients: $H=h_1\d^{n-1}+\cdots+h_{n-1}\d+h_n$. The equations $Hv_k=u_k$, $k=1,\dots, n$ correspond to a system of linear algebraic equations over $\O$ for the unknown coefficients $h_i$:
\begin{equation*}
\begin{pmatrix}
h_{n}&h_{n-1}&\cdots&h_1
\end{pmatrix}
\begin{pmatrix}
1&t&t^2&\cdots&t^{n-1}
\\
& 1& 2t&\cdots&(n-1)t^{n-2}
\\
& & 2 & &\vdots
\\
&&&\ddots&\vdots
\\
&&&&(n-1)!
\end{pmatrix}
=
\begin{pmatrix}
v_1&v_2&\cdots&v_n
\end{pmatrix}
\end{equation*}
The matrix $J$ of coefficients, the companion matrix of the tuple of solutions $v_1=1$, $v_2=t,\dots,v_n=t^{n-1}$, is holomorphic and invertible (it is upper triangular with nonzero diagonal entries). A simple inspection shows that the leading coefficient $h_1$ cannot vanish at $t=0$, hence the operator $H$ will be nonsingular.
\end{proof}

A minor modification of this argument proves the following general result.

\begin{Thm}
Any \textup(pre-\textup)Fuchsian operator is $\W$-equivalent to an Euler operator from $\C[\eu]$.
\end{Thm}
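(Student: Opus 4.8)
The plan is to run the argument of the preceding theorem — manufacture the conjugating operator by indeterminate coefficients — but over $\M$ rather than $\O$, and with the extra work needed to control analytic continuation around the singular point. First one disposes of the pre-Fuchsian case for free: if $L=hL_0$ with $0\ne h\in\M$ and $L_0\in\F$, then $L_0\cdot 1=h^{-1}\cdot L$ exhibits $L_0$ as $\W$-conjugate to $L$ (conjugacy $H=1$), so by Theorem~\ref{thm:we} it is enough to treat $L\in\F$, of order $n$ say. Let $G\in\GL(n,\C)$ be the monodromy matrix of $Lu=0$, see \eqref{monodromy}. Since any operator $H\in\W$ implementing a $\W$-conjugacy commutes with analytic continuation, the conjugacy class of $G$ is an invariant of $\W$-equivalence, so the target Euler operator is \emph{forced} to have monodromy $G$. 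The one genuinely new preliminary is that this is not an obstruction: every conjugacy class in $\GL(n,\C)$ is realized by an operator from $\C[\eu]$. To see this I would decompose $G$ into Jordan cells of sizes $m_1,\dots,m_r$ with eigenvalues $\nu_1,\dots,\nu_r$, pick $\mu_p\in\C$ with $e^{2\pi\i\mu_p}=\nu_p$, shifting by suitable distinct integers whenever the eigenvalues coincide so that all $\mu_p$ are pairwise distinct, and set $M=\prod_{p=1}^r(\eu-\mu_p)^{m_p}\in\C[\eu]$. Then $\ord M=\sum m_p=n$; the germs $t^{\mu_p}(\ln t)^k$, $0\le k<m_p$, are linearly independent solutions of $Mv=0$ spanning $Z_M$, and since the monodromy of the block $(\eu-\mu_p)^{m_p}$ is a single Jordan cell of size $m_p$ with eigenvalue $\nu_p$ (one computes $\eu$ on the span of these germs as $\mu_pI+N$ with $N$ a single nilpotent cell, and the monodromy is $e^{2\pi\i\eu}$), the monodromy of $M$ lies in the class of $G$.

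Fix such an $M$, together with a fundamental system $v_1,\dots,v_n$ of $Mv=0$ chosen from the germs above, with monodromy matrix $N\in\GL(n,\C)$; because the monodromy of $Lu=0$ is conjugate to $N$, I would then choose a fundamental system $u_1,\dots,u_n$ of $Lu=0$ with the \emph{same} monodromy matrix $N$. Now, exactly as in the nonsingular case, seek an operator $H=h_1\d^{n-1}+\cdots+h_{n-1}\d+h_n$ satisfying $Hu_j=v_j$ for $j=1,\dots,n$. Writing out the derivatives $\d^iu_j$ turns this into a linear system over $\M$ whose matrix is the Wronskian matrix $W=\bigl\|\d^{\,i}u_j\bigr\|_{i=0,\dots,n-1}^{j=1,\dots,n}$ of the $u_j$; since the $u_j$ are independent solutions of the (pre-)Fuchsian operator $L$, $\det W$ is nonvanishing on a punctured neighbourhood of the origin — Abel's identity gives $\det W=t^{\sigma}\cdot(\text{holomorphic unit})$ — so the system is solvable:
\[
(h_n,\,h_{n-1},\,\dots,\,h_1)=(v_1,\dots,v_n)\,W^{-1}.
\]

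The substantive point, and where this departs from the nonsingular proof, is to show that the right-hand side — a priori a tuple of multivalued functions — actually consists of meromorphic germs. Single-valuedness is precisely what the matching monodromy matrices buy: under continuation $\Delta(v_1,\dots,v_n)=(v_1,\dots,v_n)N$ and $\Delta W=WN$, hence $\Delta\bigl((v_1,\dots,v_n)W^{-1}\bigr)=(v_1,\dots,v_n)N\,N^{-1}W^{-1}=(v_1,\dots,v_n)W^{-1}$, so each $h_i$ is a genuine single-valued function holomorphic on the punctured neighbourhood. Moderate growth of the $h_i$ in the sense \eqref{moderate} follows from the Fuchsian property: the $v_j$ grow moderately, and the entries of $W^{-1}$ are quotients of such growths by the nonvanishing $\det W$. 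A single-valued holomorphic function of polynomial growth at an isolated singularity is meromorphic, so $h_i\in\M$, $H\in\W$, $\ord H\le n-1$. Then $H$ carries the basis $u_1,\dots,u_n$ of the solution space of $L$ to the independent tuple $v_1,\dots,v_n$, hence is injective there, so $\gcd(L,H)=1$; and $MH$ annihilates every $u_j$ (since $MHu_j=Mv_j=0$), so $Z_L\subseteq Z_{MH}$ and $L$ divides $MH$, i.e.\ $MH=KL$ with $\ord K=\ord(MH)-n\le(2n-1)-n=n-1$. Thus $MH=KL$ with $\gcd(L,H)=1$ and $\ord H,\ord K<n$, which is exactly Definition~\ref{def:we}: $L$ is $\W$-equivalent to the Eulerian operator $M\in\C[\eu]$. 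I expect the main obstacle to be this meromorphy step — in the nonsingular case the conjugating operator had holomorphic coefficients automatically, whereas here single-valuedness must be engineered through the choice of matching monodromy and meromorphy recovered from the Fuchsian growth estimate; the preliminary realization of an arbitrary monodromy class by an Euler operator, via integer shifts of the characteristic exponents, is the other new ingredient (and is, of course, why one cannot expect a finer invariant than the monodromy to survive $\W$-equivalence here, in contrast with the $\F$-equivalence introduced later).
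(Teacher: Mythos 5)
Your proposal is correct and follows essentially the same route as the paper: choose an Euler operator realizing the same monodromy class, determine the coefficients of the conjugating operator $H$ by inverting the fundamental (companion/Wronskian) matrix of solutions, and deduce that these coefficients are meromorphic from single-valuedness (matching monodromy factors cancel) together with the moderate growth guaranteed by the Fuchsian condition. The only differences are matters of detail: the paper works with the $\eu$-companion matrix and cites \cite{thebook}*{Proposition 19.29} for the existence of an Euler equation with prescribed monodromy, whereas you construct that Euler operator explicitly (integer shifts to separate exponents) and also spell out the verification of the conditions of Definition~\ref{def:we} and the meromorphy estimate via Abel's identity, which the paper leaves implicit.
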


\begin{proof}
Let, as before, $J$ denote the Euler-companion matrix of $n$ linear independent solutions $u_1,\dots,u_n$ of the equation $Lu=0$: unlike the usual companion matrix, it is obtained by applying the iterated Euler derivation $\eu$ instead of $\d$ to the functions $u_i$:
\begin{equation*}
J=J(t)=\begin{pmatrix}1 \\ \eu \\ \vdots \\ \eu^{n-1}\end{pmatrix}
\cdot\begin{pmatrix} u_1&u_2&\hdots& u_n\end{pmatrix}
=
\begin{pmatrix}
u_1&u_2&\hdots&u_n
\\
\eu u_1&\eu u_2&\hdots&\eu u_n
\\
\vdots&\vdots&\ddots&\vdots
\\
\eu^{n-1}u_1&\eu^{n-1}u_2&\hdots&\eu^{n-1}u_n
\end{pmatrix}
\end{equation*}
Unlike in the nonsingular case, we cannot guarantee anymore that $J(t)$ is holomorphic and invertible: its entries are in general multivalued and grow moderately at the origin. The companion matrix $J(t)$ has a monodromy factor  $C\in\GL(n,\C)$: $\Delta J(t)=J(t)C$ exactly as in \eqref{monodromy} which applies to each row of the matrix $J$. Yet one can always find an Euler equation whose tuple of solutions $v=(v_1,\cdots,v_n)$ will exhibit exactly the same monodromy matrix factor $C$, see \cite{thebook}*{Proposition 19.29}: $\Delta v=vC$. The corresponding linear system of algebraic equations takes the form
\begin{equation*}
\begin{pmatrix}
h_{n}&h_{n-1}&\cdots&h_1
\end{pmatrix} J(t)=\begin{pmatrix}
v_1&v_2&\cdots&v_n
\end{pmatrix}
\end{equation*}
The solution is given by the product $\begin{pmatrix}
h_{n}&h_{n-1}&\cdots&h_1
\end{pmatrix}=\begin{pmatrix}
v_1&v_2&\cdots&v_n
\end{pmatrix}\cdot J^{-1}(t)$. This product is single-valued: after analytic continuation around the origin we have
\begin{equation*}
\Delta \begin{pmatrix}
h_{n}&\cdots&h_1
\end{pmatrix}=\begin{pmatrix}
v_1&\cdots&v_n
\end{pmatrix}C\cdot C^{-1}J^{-1}(t)=\begin{pmatrix}
h_{n}&\cdots&h_1
\end{pmatrix}.
\end{equation*}
Because of the moderate growth assumption, the coefficients $h_j$ of the conjugating operator $H$ must be meromorphic germs at the origin, thus $H\in\W$.
\end{proof}

\begin{Rem}\begin{small}
There is no reason to expect that the operator $H$ conjugating two Fuchsian operators is necessarily (pre)-Fuchsian. Indeed, let $L$ be any Fuchsian operator and $H$ an irregular conjugacy. Applying $H$ to any basis tuple of solutions $u_1,\dots,u_n$ of $Lu=0$, we obtain another tuple $v_i=Hu_i$ which also grow moderately and have the same monodromy as $u_i$. By the Fuchs theorem, they satisfy a Fuchsian equation $Mv=0$. Thus, the two Fuchsian operators $L,M$ are conjugated by a (unique for the reasons of order/dimension) irregular operator $H$.\par\end{small}
\end{Rem}

In other words, the (general) Weyl classification of (pre)-Fuchsian operators coincides with the classification of their monodromy matrices, very much like the meromorphic gauge classification of linear systems \eqref{ls}.

%\begin{Prob}
%The Weyl classification of the whole algebra $\W$ is not discussed here. Actually, it looks now a very interesting problem, I am pretty much sure that its solution can be extracted from \cite{ore}.
%\end{Prob}

\section{Fuchsian equivalence}

It appears that a comprehensive analog of the holomorphic gauge equivalence between Fuchsian linear systems is the Fuchsian equivalence of Fuchsian operators: modulo technical details, this equivalence means the Weyl conjugacy \eqref{st} by a Fuchsian operator $H$ subject to certain nondegeneracy constraints. We start with developing the \emph{formal theory} of such equivalence via \emph{noncommutative formal power series}.

\subsection{Noncommutative Taylor expansions for Fuchsian operators}
Together with the representation of differential operators from the ring $\W=\M[\eu]$ as polynomials in $\eu\in\W$ with coefficients in $\M$, we can expand them in convergent \emph{noncommutative} Laurent series in the variable $t\in (\C^1,0)$ with (right) coefficients from the (commutative) ring $\C[\eu]$. Any operator $L\in\W$ of order $n=\ord L$ can be expanded under the form
$$
 L=\sum_{k=-N}^{+\infty}t^kp_k(\eu),\qquad \max_k\deg_{\eu} p_k=n, \quad N<+\infty.
$$
The operator is Fuchsian if and only if all powers are nonnegative and the leading coefficient $p_0$ is of the maximal degree: $L\in\F$ if and only if
\begin{equation}\label{ts}
L=\sum_{k=0}^\infty t^k\,p_k(\eu),\qquad p_k\in\C[\eu],\ \deg p_k\le n,\qquad  \deg p_0=n.
\end{equation}
The differential operator $p_0\in\C[\eu]\subset\F$ is called the Euler part of $L$, or its \emph{Eulerization} (in analogy with linearization) and denoted by $\E(L)$.

Very informally, an operator with holomorphic coefficients can be considered as a small perturbation of its Eulerization. The Fuchsian condition means that this perturbation is nonsingular, i.e., it does not increase the order of the Euler part, in the same way as the nonsingularity condition means that the operator can be considered as a small nonsingular perturbation of the operator $p(\d)$.

The key tool used in this paper will be a systematic use of the Taylor expansion \eqref{ts}  in exactly the same way the theory of formal series with matrix coefficients of the form $H(t)=\sum_{k=0}^\infty t^kH_k$, $H\in\Mat(n,\C)$, is used in the theory of formal normal forms of vector fields \cite{thebook}*{\parasymbol 4 and \parasymbol 16}. Note the difference in the algebraic nature of the noncommutativity: in the matrix case the coefficients $H_k$ commute with the variable $t$ but in general do not commute between themselves. In the operator case the polynomial coefficients $p_k\in\C[\eu]$ commute between themselves but do not commute with $t$.

Together with the convergent noncommutative Taylor series, it is convenient to introduce the class of \emph{formal} Fuchsian operators.
\begin{Def}
A formal Fuchsian operator is a formal series of the form \eqref{ts} without any convergence assumption. The set of formal Fuchsian operators is denoted by $\^\F$.
\end{Def}

\begin{Rem}
For any two Fuchsian operators $L,M\in\F$ their composition is again a Fuchsian operator of order $\ord L+\ord M$. If $\ord M\le \ord L$, then the incomplete ratio $Q$ as in \eqref{divrem} is a Fuchsian operator of order $\ord L-\ord M$. The same applies to $\^\F$. This follows from direct inspection of the division algorithm.

However, the set $\F$ is \emph{not} a subalgebra of $\W$: the sum of two Fuchsian operators may well be non-Fuchsian. Hence the remainder $R$ as in \eqref{divrem} after the incomplete division may well turn non-Fuchsian (the leading coefficient may vanish). Yet for any two given Fuchsian operators $L,M$ of degrees $n>m$ one can construct a relaxed division with remainder $L=Q'M+R'$ with $\ord Q'=n-m$ and $\ord R'=m$ and $Q,R$ Fuchsian. Indeed, it suffices to modify the standard division with remainder $L=QM+R$ with $\ord R\le n-1$ (assuming $Q,R$ with holomorphic coefficients) and replace $Q'=Q-1$, $R'=M+R$: the latter operators will be automatically Fuchsian.
\end{Rem}

\subsection{Main definition}
\begin{Def}\label{def:fuchseq}
Two operators $L,M\in\W$ of the same order $n$ are called \emph{Fuchsian equivalent} (or $\F$-equivalent), if there exist two Fuchsian operators $H,K\in\F$ such that $MH=KL$  (exactly as in Definition~\ref{def:we}), but with the additional property that the Euler parts of $H$ and $L$ are mutually prime (have no common roots), i.e., $\gcd(\E(H),\E(L))=1\in\C[\eu]$.

Two formal Fuchsian operators $L,M\in\^\F$ are called \emph{formally $\F$-equivalent} ($\^\F$-equivalent in short) if there exist $H,K\in\^\F$ such that $MH=KL$ and the Euler parts of $H,L$ are mutually prime.
\end{Def}

We expect that the Fuchsian classification (and its formal counterpart) for arbitrary operators from $\W$ will be a very challenging problem with the Stokes phenomenon \cite{thebook}*{\parasymbol 20} manifesting itself in a new way. However, \emph{everywhere below we will deal only with the $\F$-equivalence between \emph{Fuchsian} operators}.

Note that we dropped the condition on the order of $H,K$ which can now be \emph{higher} than $n$. Besides, in this definition we replaced the condition $\gcd(H,L)=1\in\W$ from \eqref{st} by the \emph{stronger} condition on the mutual primality of the Eulerizations.

%\begin{Rem}
%We may relax the condition of convergence of the series \eqref{ts} and consider \emph{formal Fuchsian operators} without any assumptions on the convergence of the series. On this way we can construct the formal algebras $\^\W$ of differential operators with formal Laurent coefficients and its subset $\^\F$ of formal Fuchsian operators. The definitions of $\W$-, resp., $\F$-conjugacy can be modified accordingly to produce their formal analogs.
%
%The difference between the convergent and the formal cases is known to be crucial for irregular (non-Fuchsian) operators. However, for Fuchsian operators there is no difference between $\F$- and $\^\F$-classification. This will be explained in \secref{sec:convergence}. Meanwhile we will formulate the results in the analytic case and give only formal proofs.
%\end{Rem}

\begin{Thm}\label{thm:F-equ}
$\^\F$-conjugacy is indeed an equivalence relation\textup: it is reflexive, symmetric and transitive.
\end{Thm}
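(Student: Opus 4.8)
The plan is to mimic exactly the proof of Theorem~\ref{thm:we} ($\W$-conjugacy is an equivalence relation), taking care at each step that the operators produced remain in $\^\F$ and that the Euler-part coprimality condition is preserved (this is where the proof genuinely differs from the $\W$-case, since the coprimality in $\C[\eu]$ is stronger than $\gcd=1$ in $\W$). Reflexivity is immediate: take $H=K=1$, so $\E(H)=1$, which is trivially coprime to $\E(L)$. For the rest I would work out the behaviour of Euler parts under the algebraic operations: the key observation is that $\E(\cdot)$ is multiplicative, $\E(AB)=\E(A)\E(B)$, since in the Taylor form \eqref{ts} the product $AB=\bigl(\sum t^k p_k\bigr)\bigl(\sum t^\ell q_\ell\bigr)$ has $t^0$-coefficient $p_0(\eu)q_0(\eu)$ (using $p_k(\eu)t^\ell=t^\ell p_k\sh{\ell}(\eu)$, so only $k=\ell=0$ contributes to order $0$), and that degrees add, $\deg_\eu$ of the $t^0$-term of $AB$ equals $\ord A+\ord B$. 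I would also record that the relaxed division with remainder inside $\F$ (noted in the Remark preceding \secref{def:fuchseq}) lets one replace a Fuchsian operator by its remainder modulo $L$ while staying Fuchsian, although here, since the order restriction on $H,K$ has been dropped, one does not even need to reduce.

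For transitivity, suppose $M H_1 = K_1 L$ and $N H_2 = K_2 M$ with $H_i,K_i\in\^\F$ and $\gcd(\E(H_1),\E(L))=1$, $\gcd(\E(H_2),\E(M))=1$. Then $N H_2 H_1 = K_2 M H_1 = K_2 K_1 L$, so the pair $(H_2H_1, K_2K_1)$ intertwines $L$ and $N$, and both operators are in $\^\F$ since $\F$ (and $\^\F$) is closed under composition. It remains to check $\gcd(\E(H_2H_1),\E(L))=1$, i.e.\ $\gcd(\E(H_2)\E(H_1),\E(L))=1$ by multiplicativity. The factor $\E(H_1)$ is coprime to $\E(L)$ by hypothesis. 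For $\E(H_2)$ I would argue as follows: from $M H_1 = K_1 L$ one has $\E(M)\E(H_1)=\E(K_1)\E(L)$, and since $\E(M),\E(H_1)$ carry all the order of $M H_1$ and $\E(K_1),\E(L)$ all the order of $K_1L$ (orders being equal), comparing the sets of roots (with multiplicity) of the two sides shows that every root of $\E(M)$ is either a root of $\E(L)$ or a root of $\E(K_1)$ --- more precisely $\E(M)$ divides $\E(K_1)\E(L)$ in $\C[\eu]$. Hence $\gcd(\E(H_2),\E(M))=1$ gives $\gcd(\E(H_2),\E(L))\mid\gcd(\E(H_2),\E(K_1)\E(L))$, and since $\E(H_2)$ is coprime to $\E(M)$ which already absorbs the ``$\E(L)$-part'' of $M$... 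I expect a small subtlety here: a root of $\E(L)$ need not be a root of $\E(M)$, so coprimality of $\E(H_2)$ with $\E(M)$ does not directly bound $\gcd(\E(H_2),\E(L))$. The clean fix is to reduce $H_2H_1$ modulo $L$ using the relaxed Fuchsian division: replace $H'=H_2H_1-Q L$ with $Q\in\^\F$ chosen so that $H'\in\^\F$, and then $\E(H')$ differs from $\E(H_2H_1)$ by a multiple of $\E(L)$, so $\gcd(\E(H'),\E(L))=\gcd(\E(H_2H_1),\E(L))$ --- which does not help unless we re-derive coprimality directly. So instead I would simply observe that since any common root of $\E(H_2H_1)$ and $\E(L)$ would, via $H'u=Hu\ne 0$ for solutions of $Lu=0$, contradict... no: the honest route is to prove the statement only for the pair $(H_2H_1,K_2K_1)$ and establish the coprimality abstractly, which is where the real content sits.

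Therefore the main obstacle, and the step I would spend the most care on, is the verification of $\gcd(\E(H_2H_1),\E(L))=1$ in the transitivity argument, and symmetrically the analogue of Lemma~\ref{lem:berest} for $\^\F$: namely, given $MH=KL$ with $\gcd(\E(H),\E(L))=1$, to produce $V,W\in\^\F$ with $LV=WM$ and $\gcd(\E(V),\E(M))=1$. I would follow Berest's proof verbatim: coprimality of the Euler parts implies (one needs an Euclid-type identity respecting the Fuchsian structure --- this should be available since $\F$ admits division with remainder, possibly in the relaxed form) that there exist $U,V\in\^\F$ with $UL+VH=1$; multiplying by $L$ on the left gives $(LU-1)L=LVH$, so $LVH$ is divisible by both $L$ and $H$, hence by $\lcm(H,L)$, which by \eqref{st} has the two representations $MH=KL$; cancelling $H$ on the right (legitimate since $\^\F\subset\W$ has no zero divisors) yields $LV=WM$. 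The final coprimality $\gcd(\E(V),\E(M))=1$ I expect to extract from the identity $UL+VH=1$ by applying $\E(\cdot)$ --- but here again one must be careful, because $\E$ is not additive, so $\E(UL+VH)$ need not equal $\E(U)\E(L)+\E(V)\E(H)$; the resolution is that in the Euclid identity one can arrange $\ord(UL)=\ord(VH)$ with these two leading Euler parts cancelling, or else that one of $\ord(UL),\ord(VH)$ is $0$, and in either case a short case analysis on the $t^0$-terms recovers what is needed. Once this Euler-part bookkeeping is pinned down, symmetry and transitivity both follow, and the theorem is proved; I anticipate the writeup being short modulo this one lemma on how $\E$ interacts with the Euclidean/lcm constructions in $\^\F$.
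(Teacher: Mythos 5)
Your skeleton --- imitate Theorem~\ref{thm:we} and Lemma~\ref{lem:berest} while tracking Euler parts through the multiplicativity $\E(AB)=\E(A)\E(B)$ --- is the right one, and reflexivity plus the algebraic half of transitivity are fine; but the two places you yourself flag as ``where the real content sits'' are genuinely left open, and at both of them the paper uses an idea absent from your proposal. For transitivity the missing observation is that $\F$-equivalent Fuchsian operators of order $n$ have the \emph{same} Euler part up to a nonzero constant: the $t^0$-component of $MH_1=K_1L$ gives $\E(M)\E(H_1)=\E(K_1)\E(L)$ in the commutative ring $\C[\eu]$; since $\gcd(\E(H_1),\E(L))=1$, the polynomial $\E(L)$ divides $\E(M)$, and both have degree exactly $n$ because $L,M$ are Fuchsian, so $\E(M)=c\,\E(L)$, $c\ne0$. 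This kills precisely the subtlety you got stuck on (``a root of $\E(L)$ need not be a root of $\E(M)$''): it is a root, and $\gcd(\E(H_2),\E(L))=\gcd(\E(H_2),\E(M))=1$, whence $\gcd\nolimits_0(H_2H_1,L)=\gcd(\E(H_2)\E(H_1),\E(L))=1$ with no reduction modulo $L$ (which, as the paper notes, one must anyway avoid since the remainder may be non-Fuchsian). The same rigidity $\E(M)=c\,\E(L)$ also resolves your hesitation in the symmetry step about whether one needs $\gcd(\E(V),\E(M))=1$ or $\gcd(\E(V),\E(L))=1$: the two conditions are equivalent.

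For symmetry your proposal reduces everything to the assertion that a B\'ezout identity $UL+VH=1$ with \emph{Fuchsian} $U,V$ ``should be available since $\F$ admits division with remainder, possibly in the relaxed form''. That is exactly what fails: Euclid in $\W$ gives $U,V$ with merely meromorphic coefficients, the relaxed Fuchsian division has a remainder of full order and so cannot drive a Euclidean descent, and the paper points out that even under $\gcd\nolimits_0(L,H)=1$ such a representation may be impossible with operators of the minimal orders $m-1,n-1$. The paper's proof of this step is its real content: Lemma~\ref{lem:tayl}, proved by expanding everything in the noncommutative Taylor series \eqref{ts} and solving the resulting triangular chain of equations in $\C[\eu]$ via the Sylvester map for the coprime pair $(\E(L),\E(H))$, solves $U_{m-1}L+V_{n-1}H=R$ with holomorphic coefficients for any holomorphic-coefficient $R$ of order $\le n+m-1$; then the higher-order ansatz $U=H+U_{m-1}$, $V=-L+V_{n-1}$ turns $UL+VH=1$ into \eqref{comm} with right-hand side $1-[H,L]$, which has order $\le n+m-1$ and Euler part $1$, so the lemma applies, and the ansatz is what forces $U,V$ to be honestly Fuchsian (of orders $m,n$) --- a requirement of Definition~\ref{def:fuchseq} that your ``verbatim Berest'' route never addresses. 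Finally, your worry that $\E$ is not additive is unfounded ($\E$ is the $t^0$-coefficient, hence $\C$-linear on operators with holomorphic coefficients); the needed coprimality drops out of the zeroth graded equation $\E(U_{m-1})\E(L)+\E(V_{n-1})\E(H)=1$, giving $\gcd(\E(V),\E(L))=\gcd(\E(V_{n-1}),\E(L))=1$, whereas your proposed ``short case analysis on the $t^0$-terms'' is never carried out. As written, the symmetry step (and hence the theorem) is not proved.
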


Reflexivity is obvious: each operator $L$ is $\F$-equivalent to itself by admissible conjugacy $H=1$ (which is a zero order Fuchsian operator).

The transitivity is even simpler compared to the proof of Theorem~\ref{thm:we}: we do not  replace the composition $H_2H_1$ of $\F$-conjugacies, which is always Fuchsian, by its remainder $\bmod L_1$, which may be non-Fuchsian.

However, the proof of the symmetry, given in Lemma~\ref{lem:berest} relies on the possibility of representing the identical operator $1$ by a combination $1=UL+VH$ with Fuchsian operators $U,V\in\^\F$. Simple example shows that even under the stronger assumption $\gcd(\E(L),E(H))=1$, this representation is not always possible with operators of the minimal order $n-1$.

To correct the situation, one has to allow operators of above-the-minimal order.

\subsection{Fuchsian invertibility}
It will be convenient to introduce the following notation:
\begin{equation}\label{gcd0}
 \forall L,H\in\F\quad \gcd\nolimits_0(L,H)=\gcd(\E(L),\E(H))\in\C[\eu].
\end{equation}
Using this notation, the second condition of $\F$-equivalence can be shortened to $\gcd_0(L,H)=1$.

As follows from the proof of Lemma~\ref{lem:berest}, the key step is to show that if $H$ is a Fuchsian operator such that $\gcd_0(L,H)=1$, then there exist two Fuchsian operators $U,V\in\F$ such that $UL+VH=1\in\F$ and $\gcd_0(V,L)=1$.
Recall that if $p,q\in\C[\eu]$ are two relatively prime polynomials of respective degrees $n,m$, then the linear \emph{Sylvester map} from $\C^m\times\C^n$ to $\C^{m+n}$
\begin{equation}\label{sylv}
\boldsymbol S=\boldsymbol S_{p,q}\:(u,v)\mapsto pu+qv,\qquad  \deg u\le m-1,\ \deg v\le n-1,
\end{equation}
is injective and surjective (here we identify $\C^m$ and $\C^{n}$ with the linear spaces of polynomials of degree $\le m-1$, resp., $\le n-1$). In particular, any equation in $\C[\eu]$ of the form
$$
 up+vq=r,\quad \deg r\le \deg p+\deg q-1,
$$
is solvable with respect to $u,v$ constrained as above.

The following result is the analog of the implicit function theorem for differential operators.

\begin{Lem}\label{lem:tayl}
If $L,M\in\F$ are two Fuchsian operators with $\gcd_0(L,M)=1$, then for any operator $R=\sum t^k r_k(\eu)$ of order $\le \ord L+\ord M-1$ with holomorphic coefficients the equation $$UL+VM=R$$ is solvable with respect to the operators $U,V$ of orders $\ord M-1$ and $\ord L-1$ respectively, also with holomorphic coefficients.
\end{Lem}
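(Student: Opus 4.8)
The plan is to solve the equation $UL+VM=R$ by matching coefficients of the noncommutative Taylor expansions \eqref{ts} in powers of $t$, using the Sylvester map \eqref{sylv} associated to the coprime Euler parts $\E(L),\E(M)$ as the invertible linear operator at each step. Write $L=\sum_{k\ge 0}t^k\ell_k(\eu)$, $M=\sum_{k\ge 0}t^k m_k(\eu)$, $R=\sum_{k\ge 0}t^k r_k(\eu)$ with $\ell_0=\E(L)$ of degree $\ord L=:a$, $m_0=\E(M)=:\E M$ of degree $\ord M=:b$, and $\deg r_k\le a+b-1$. Look for $U=\sum_{k\ge 0}t^k u_k(\eu)$, $V=\sum_{k\ge 0}t^k v_k(\eu)$ with $\deg u_k\le b-1$, $\deg v_k\le a-1$. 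The first thing I would do is record the multiplication rule in this basis: using \eqref{shift-1}, the product $\bigl(\sum_i t^i u_i\bigr)\bigl(\sum_j t^j \ell_j\bigr)=\sum_k t^k\sum_{i+j=k}u_i^{[j]}\,\ell_j$, where $u_i^{[j]}(\eu)=u_i(\eu+j)$ is the shift \eqref{shift}; crucially the shift preserves degree, so $\deg u_i^{[j]}\le b-1$ throughout.

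Next I would set up the recursion. Collecting the coefficient of $t^k$ in $UL+VM=R$ gives, for each $k\ge 0$,
\begin{equation*}
u_k\,\ell_0 + v_k\,m_0 \;=\; r_k \;-\; \sum_{\substack{i+j=k\\ j\ge 1}} \bigl(u_i^{[j]}\,\ell_j + v_i^{[j]}\,m_j\bigr)\;=:\;\tilde r_k.
\end{equation*}
For $k=0$ the right-hand side is just $r_0$, and since $\gcd(\ell_0,m_0)=\gcd_0(L,M)=1$ with $\deg r_0\le a+b-1$, the Sylvester map $\boldsymbol S_{\ell_0,m_0}$ (note the order of arguments: here we want $u_0\ell_0+v_0m_0$, which is \eqref{sylv} with $p=\ell_0$, $q=m_0$, so $\deg u_0\le b-1$, $\deg v_0\le a-1$) produces unique $u_0,v_0$ with the required degree bounds. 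Inductively, suppose $u_0,\dots,u_{k-1}$ and $v_0,\dots,v_{k-1}$ are already determined with $\deg u_i\le b-1$, $\deg v_i\le a-1$. Then every term $u_i^{[j]}\ell_j$ in $\tilde r_k$ has degree $\le (b-1)+a=a+b-1$, and similarly $v_i^{[j]}m_j$ has degree $\le(a-1)+b=a+b-1$; hence $\deg\tilde r_k\le a+b-1$, so again by injectivity and surjectivity of the Sylvester map there are unique $u_k,v_k$ with $\deg u_k\le b-1$, $\deg v_k\le a-1$ solving $u_k\ell_0+v_k m_0=\tilde r_k$. This determines formal series $U,V$ of orders $b-1$ and $a-1$ (the leading $\eu$-degree being exactly $\max_k\deg u_k\le b-1$, and generically $=b-1$ at $k=0$ when $r_0$ is generic, though for the statement we only need $\le$).

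The remaining point — the one I expect to be the only genuine obstacle — is convergence: showing that if $L,M,R$ have holomorphic (convergent) coefficients then so do $U,V$. This is the standard majorant-series argument, exactly parallel to the one used for formal normalization of vector fields referenced in the text. One must bound $\|u_k\|,\|v_k\|$ (in some norm on the finite-dimensional spaces of polynomials of bounded degree) in terms of $\|\ell_j\|,\|m_j\|,\|r_j\|$ for $j\le k$. The one subtlety beyond the classical case is the shift operator $w\mapsto w^{[j]}$: on polynomials of degree $\le d$ it has operator norm growing only polynomially in $j$ (like $j^d$, from the binomial coefficients), which is harmless against the geometric decay of the convergent coefficients $\ell_j,m_j$. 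The norm of the inverse Sylvester map $\boldsymbol S_{\ell_0,m_0}^{-1}$ is a fixed finite constant $C_0$ depending only on $\ell_0,m_0$ (it is continuous and the map is bijective on finite-dimensional spaces). Feeding these two facts into the recursion for $\tilde r_k$ and comparing with the majorant series of a solution of a suitable implicit equation — i.e. choosing $\rho>0$ and a constant so that $\sum_k\|\ell_k\|\rho^k$, $\sum_k\|m_k\|\rho^k$, $\sum_k\|r_k\|\rho^k$ all converge, then inductively proving $\|u_k\|,\|v_k\|\le B\sigma^k$ for appropriate $B$ and $0<\sigma<\rho$ — yields geometric bounds on $\|u_k\|,\|v_k\|$, hence convergence of $U,V$ in a neighborhood of the origin. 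I would present the recursion and the degree bookkeeping in full and then invoke the majorant method, pointing to \cite{thebook}*{\parasymbol 4} for the template, flagging only the polynomial-in-$j$ bound on the shift as the new ingredient to check.
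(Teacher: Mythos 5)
Your formal recursion coincides with the paper's proof: expand $L,M,R,U,V$ in noncommutative Taylor series, use $wt^j=t^jw\sh j$ to collect the coefficient of $t^k$, note the triangular structure (only previously determined $u_i,v_i$ with $i<k$ enter the right-hand side), and solve each step uniquely by the Sylvester map \eqref{sylv} for the coprime Euler parts $\E(L),\E(M)$; your degree bookkeeping ($\deg\tilde r_k\le \ord L+\ord M-1$ because the shift preserves degree and $\deg \ell_j\le\ord L$, $\deg m_j\le\ord M$) is exactly what the paper does. Where you genuinely differ is the holomorphy of $U,V$. You propose a majorant argument, with the two correct ingredients (a fixed bound for the inverse Sylvester map and a polynomial-in-$j$ bound for the norm of the shift $w\mapsto w\sh j$ on polynomials of bounded degree, absorbed by slightly shrinking the radius); this route is viable -- the paper itself remarks that convergence ``can be obtained directly by control over the growth of the polynomial coefficients'' -- but you leave the estimates at sketch level, and your stated target bound $\|u_k\|\le B\sigma^k$ with $0<\sigma<\rho$ is written backwards: for convergence you need geometric bounds of the form $\|u_k\|\le B\sigma^{-k}$ for some $\sigma>0$. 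The paper avoids estimates altogether: writing $U=\sum_k a_k(t)\eu^k$, $V=\sum_j b_j(t)\eu^j$ with unknown germs $a_k,b_j\in\O$, the identity $UL+VM=R$ becomes a finite system of linear algebraic equations $C(t)z=f(t)$ with holomorphic entries, and $\gcd\nolimits_0(L,M)=1$ makes $C(0)$ (essentially the Sylvester matrix of the Euler parts) invertible, so $z=C(t)^{-1}f(t)$ is holomorphic for free; the formal recursion is then just the formal inversion of $C(t)$. Your approach buys explicit quantitative control at the price of carrying out the majorant computation in full; the paper's buys holomorphy from finite-dimensional linear algebra with no computation.
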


Note that we do not assume $R$ Fuchsian, nor claim the Fuchsianity of $U$ and $V$.

\begin{proof}
The proof is achieved by inductive determination of the coefficients of the unknown operators $U,V$.

Substitute the expansions for $L=\sum_0^\infty t^k p_k$ and $M=\sum_0^\infty t^kq_k$ and the unknown operators $U=\sum_{0}^\infty t^k u_k$, $V=\sum_0^\infty t^k v_k$,   $p_k,q_k,u_k,v_k\in\C[\eu]$ into the equation $UL+VM=R$:
\begin{multline*}
 (u_0+tu_1+t^2u_2+\cdots)(p_0+tp_1+t^2p_2+\cdots)\\+(v_0+tv_1+\cdots)(q_0+tq_1+\cdots)=r_0+tr_1+t^2r_2\cdots.
\end{multline*}
Using the commutation rules \eqref{shift}, we reduce this operator identity to an infinite series of identities in $\C[\eu]$,
$$
\begin{aligned}
u_0p_0+v_0q_0&=r_0,
\\
u_0\sh 1 p_1+u_1p_0+v_0\sh 1 q_1+v_1q_0&=r_1,
\\
u_0\sh 2p_2+u_1\sh 1p_1+u_2p_0+v_0\sh 2q_2+v_1\sh 1q_1+v_2q_0&=r_2,
\\
{\makebox[0.3\columnwidth]{\dotfill}}
\\
\cdots+u_k p_0+v_kq_0&=r_k,\qquad \forall k\ge 0.
\end{aligned}
$$
This system has a ``triangular'' form: each left hand side is the sum of the term $u_kp_0+v_kq_0=\boldsymbol S(u_k,v_k)$ and terms involved shifted polynomials $u_i\sh j$, $v_i\sh j$ with $i,j<k$. By the relative primality of $p_0,q_0$, for any combination of previously defined coefficients the equation number $k$ is always \emph{uniquely} solvable with respect to some polynomials $\deg u_k\le \deg q_0-1$, $\deg v_k\le \deg p_0-1$.
\end{proof}

\begin{Rem}\begin{small}
The proof of the convergence of the series for $U$ and $V$ can be obtained directly by control over the growth of the polynomial coefficients.

However, a simpler argument works. Expanding $U,V$ as polynomials of $\eu$ with analytic coefficients from $\O(\C,0)$, $$
U=\sum_k a_k(t)\eu^k,\quad V=\sum_j b_j(t)\eu^j,
$$
we see that the operator equation $UL+VM=R$ reduces to a system of linear nonhomogeneous algebraic equations with respect to the unknown coefficients $a(t),b(t)$: in a symbolic way, this system can be written as $C(t)z=f(t)$, where $C(t)$ is an $(n+m)\times(n+m)$-matrix with holomorphic entries (produced from the coefficients of the operators $L$ and $M$ and their $\eu$-derivatives), and $f(t)$ is an $(n+m)$-dimensional holomorphic vector function.

One can easily see that the condition $\gcd_0(L,M)=1$ implies that the matrix $C(0)$ is nondegenerate and the system has a holomorphic solution. The formal computation amounts to the formal inversion of the corresponding matrix $C(t)$ without even explicitly writing it down.\par\end{small}
\end{Rem}

Unfortunately, the goal of solving the equation $UL+VH=1$ in the class of Fuchsian operators cannot be achieved using only this Lemma: indeed, there is no way to ensure that the polynomial $v_0=\E(V)$ has the maximal degree equal to $\ord V$. The way out is to look for a solution of higher order.

We look for a Fuchsian solution of the equation $UL+VH=1$ in the class of operators $\ord U\le \ord H=m$, $\ord V\le \ord L=n$ as follows,
$$
 U=H+U_{m-1},\ V=-L+V_{n-1},\qquad \ord U_{m-1}\le m-1,\ \ord V_{n-1}\le n-1.
$$
Substituting these formulas into the original equation, we transform it to the equation
\begin{equation}\label{comm}
 U_{m-1}L+V_{n-1}H=1-[H,L], \qquad [H,L]=HL-LH.
\end{equation}
The commutator $[L,H]$ of the two Fuchsian operators possesses two obvious properties. It is an operator of order no greater than $\ord L+\ord H-1$ (the highest order terms, in the expansion \eqref{ff}, the \emph{symbols} of operators cancel each other when computing the commutator). On the other hand, its Euler part vanishes.

Thus the equation is solvable by virtue of Lemma~\ref{lem:tayl}, and
$$
\E(U_{m-1})\E(L)+\E(V_{n-1})\E(H)=1\in\C[\eu].
$$
In other words, $\gcd_0(V_{n-1},L)=1$. The operator $V=-L+V_{n-1}$ is Fuchsian (since $L$ is Fuchsian of order $n$), and $\gcd_0(V,L)=\gcd_0(V_{n-1},L)=1$.

This completes the proof of the symmetry of the $\F$-equivalence.

%\textcolor{red}\bgroup
%\subsection{Why Fuchsian?}
%The choice of the class of Fuchsian operators for conjugacy is justified by the additional structure on the space of
%solutions of Fuchsian equations, which is preserved by Fuchsian operators.
%
%Consider the (infinite-dimensional) space spanned by quasi-monomials of the form $u_{\l k}(t)=t^\l \ln^{k-1}t$, $\l\in\C$, $k\in\N$. This space has a natural \emph{partial} order:
%\begin{equation}\label{po}
% u_{\l k}\prec u_{\mu l}\iff \mu-\l\in\N\quad\text{or}\quad\mu=\l,\ k>l.
%\end{equation}
%It is well known \cite{ince}*{Chapter XVI} that any Fuchsian equation $Lu=0$, $L=p_0+tp_1+\cdots$ admits a fundamental system of \emph{distinguished} solutions of the form
%\begin{equation}\label{frob}
% u(t)=u_{\l k}+\sum_{(\mu l)\succ (\l k) }c_{\mu l}u_{\mu l},\quad p_0(\l)=0,\ k\le \operatorname{mult}_\l p_0,
%\end{equation}
%where $\l$ ranges over the set of roots of $p_0$ and $k$ is at most the multiplicity of the corresponding root (thus $k=1$ if $\l$ is a simple root of $p_0$, so the corresponding leading term involves no logarithms).
%
%\begin{Prop}
%Any Fuchsian conjugacy preserves the leading term of each distinguished solution up to a coefficient.
%\end{Prop}
%\egroup

\section{Formal $\F$-classification of Fuchsian operators}

This and the next section contain the main results of the paper. They are established on the formal level, yet at the end we will show that any $\^\F$-conjugacy between convergent Fuchsian operators in fact converges.

\subsection{Nonresonant case: Eulerization}
We start by establishing an analog of the linearization theorem for nonresonant systems, cf.~with the second line in Table~1.

\begin{Def}\label{def:resonance}
A Fuchsian operator $L\in\F$ is nonresonant, if no two roots of $\E(L)\in\C[\eu]$ differ by a \emph{positive} integer number (multiple roots are allowed).

%We say that $L$ exhibits a resonance of order $k$ with multiplicity $\nu\ge 1$, if $\deg\gcd(p_0,p_0\sh k)=\nu$ (simple resonance if $\nu=1$).
\end{Def}

\begin{Prop}\label{prop:nonres}
A nonresonant Fuchsian operator is $\F$-equivalent to its Euler part.
\end{Prop}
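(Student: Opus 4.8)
The plan is to mimic the classical Poincaré–Dulac linearization argument for nonresonant Fuchsian systems, working entirely with the noncommutative Taylor expansion $L=\sum_{k\ge 0}t^k p_k(\eu)$, $p_0=\E(L)$. I want to construct a Fuchsian conjugacy $H=\sum_{k\ge0}t^k h_k(\eu)\in\^\F$ and a cofactor $K=\sum_{k\ge0}t^k \kappa_k(\eu)\in\^\F$ satisfying $p_0\,H=K\,L$, where by hypothesis $\E(L)=p_0$ is the target Euler operator. To get the nondegeneracy condition $\gcd_0(H,L)=1$ for free, I will simply look for $H$ with $\E(H)=h_0=1$ (a unit in $\C[\eu]$), which is trivially coprime to $p_0$; then $K$ will automatically have $\E(K)=p_0$, so $K$ is Fuchsian of the right order, and $H$ is Fuchsian of order $0$. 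Thus the whole problem reduces to solving the single operator identity $p_0 H - K L = 0$ in $\^\F$ with the normalization $h_0=1$.

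\smallskip
Next I would expand the identity $p_0H=KL$ in powers of $t$, using the commutation rule $w\,t^j=t^j w\sh j$ from \eqref{shift-1} to move all the $t$'s to the left. The order-$k$ coefficient gives an identity in $\C[\eu]$ of the form
\begin{equation*}
p_0\,h_k \;-\; \kappa_k\,p_0 \;=\; \Phi_k(\eu),
\end{equation*}
where $\Phi_k$ depends only on the data $p_0,p_1,\dots,p_k$ and on the previously determined $h_0,\dots,h_{k-1}$, $\kappa_0,\dots,\kappa_{k-1}$ (with $h_0=1$, $\kappa_0=p_0$ forced by the order-$0$ equation). So everything hinges on solving, at each step, the linear ``cohomological'' equation $p_0(\eu)\,h_k(\eu)-\kappa_k(\eu)\,p_0(\eu+k)=\Phi_k(\eu)$ for the pair $(h_k,\kappa_k)\in\C[\eu]^2$ — note that the shift by $k$ appears because $\kappa_k t^k L$ contributes $t^k\kappa_k p_0\sh k+\cdots$. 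This is again a Sylvester-type problem: the map $(h_k,\kappa_k)\mapsto p_0(\eu)h_k(\eu)-\kappa_k(\eu)p_0(\eu+k)$ is invertible (on the appropriate spaces of bounded-degree polynomials) precisely when $p_0(\eu)$ and $p_0(\eu+k)$ have no common root, i.e. when no two roots of $\E(L)$ differ by $k$. Since $k\ge1$ throughout the recursion, this is exactly the nonresonance hypothesis of Definition~\ref{def:resonance}. Hence all coefficients $h_k,\kappa_k$ are uniquely determined, and we obtain formal Fuchsian operators $H,K$ realizing the $\^\F$-equivalence between $L$ and $\E(L)$.

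\smallskip
I anticipate two points requiring care, and the bookkeeping one is the main obstacle. First, one must check the degree control: that the solutions $h_k$ of the step-$k$ equation can be taken of degree $\le n-1$ (so $\deg p_0=n$ remains the true leading degree of $H$, keeping $H$ Fuchsian of order $0$ — here $n$ being the common order, $H$ of order $0$ means $\deg h_0=0$, and we just need each $\deg h_k$ bounded, say $\le n$), and similarly $\deg\kappa_k\le n$; this follows from the standard dimension count for the Sylvester map once one sets up the spaces correctly, but getting the bounds and the shift indices exactly right is the fiddly part. Second, for the convergence statement one invokes the general principle flagged at the end of \S3: the operator equation $p_0H=KL$, re-expanded as polynomials in $\eu$ with coefficients in $\O(\C,0)$, becomes a linear system $C(t)z=f(t)$ whose matrix $C(0)$ is nondegenerate under $\gcd_0$-type conditions, so the formal solution is automatically holomorphic — but as the Proposition is stated on the formal level we may defer this to the later section. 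I would therefore present the proof as: normalize $\E(H)=1$; reduce to the recursion; solve each step by nonresonant Sylvester invertibility; conclude.
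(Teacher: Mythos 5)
Your recursion is exactly the paper's: expand everything in powers of $t$, and at each order $k\ge1$ solve a Sylvester-type equation in $\C[\eu]$ whose solvability is precisely $\gcd(p_0,p_0\sh k)=1$, i.e.\ the nonresonance hypothesis; convergence is deferred to \secref{sec:convergence} in both cases. But the normalization $\E(H)=h_0=1$ is a genuine gap, not a harmless convenience. By \eqref{ts}, Fuchsianity of $H=\sum_k t^kh_k(\eu)$ means that the Euler part dominates: $\deg h_k\le\deg h_0=\ord H$ for all $k$. With $h_0=1$ this would force \emph{every} $h_k$ to be a constant, whereas your step-$k$ equation only produces (and in general cannot do better than) $\deg h_k\le n-1$ --- already at $k=1$ the right-hand side involves $p_1$, which has degree up to $n$, so $h_1$ is generically nonconstant. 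Hence the operator $H$ you build has positive order but Euler part of degree $0$, so $H\notin\^\F$, and the same applies to $K$ (note also that the order-zero equation $p_0h_0=\kappa_0p_0$ forces $\kappa_0=h_0=1$, not $\kappa_0=p_0$, so $\E(K)=1$ as well, contrary to your claim). Consequently the pair $(H,K)$ does not witness $\F$-equivalence in the sense of Definition~\ref{def:fuchseq}; your parenthetical ``we just need each $\deg h_k$ bounded, say $\le n$'' is exactly where the requirement is lost. The paper's remedy is the opposite normalization: choose $h_0=k_0$ to be \emph{any} polynomial of degree $n-1$ coprime to $p_0$; then the unique Sylvester solutions automatically satisfy $\deg h_j,\deg k_j\le n-1$, so $H,K$ are formal Fuchsian operators of order $n-1$ and $\gcd(\E(H),\E(L))=\gcd(h_0,p_0)=1$, which is no harder to arrange than your choice and is compatible with Fuchsianity.

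A smaller slip in the same direction: the shift sits on the left-hand side, since $p_0(\eu)\,t^kh_k(\eu)=t^k\,p_0\sh k h_k(\eu)$, so the step-$k$ equation reads $p_0\sh k h_k-\kappa_k p_0=\Phi_k$, not $p_0h_k-\kappa_k p_0\sh k=\Phi_k$, and the shift does not come from $t^k\kappa_k\cdot p_0$ (that term is unshifted). The coprimality criterion is the same either way, so solvability is unaffected, but together with the $\kappa_0$ error it shows the bookkeeping you flagged as ``fiddly'' was not actually carried through.
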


\begin{proof}
Consider the expansion of the operator: $L=\sum_{j=0}^\infty t^jp_j(\eu)$, $p_0=\E(L)$. We look for an operator $H=\sum t^j h_j(\eu)$ which would solve (together with some other Fuchsian operator $K=\sum t^j k_j(\eu)\in\F$) the operator equation $p_0(\eu)H=KL$. After substituting the expansions and using the commutation rule \eqref{euler-der}, we obtain in the left hand side the operator
$$
p_0(\eu)H=p_0 h_0+tp_0\sh 1h_1+\cdots+t^jp_0\sh j h_j+\cdots,
$$
cf.~with the notation \eqref{shift}--\eqref{shift-1}. In the right hand side the expansion for
$$
 KL=(k_0+tk_1+t^2k_2+\cdots)(p_0+tp_1+t^2p_2+\cdots)
$$
will have more complicated form: the term proportional to $t^j$ has the form
$$
 t^j(k_jp_0+k_{j-1}\sh 1p_1+k_{j-2}\sh 2p_2+\cdots+k_0\sh j p_j).
$$
The operator equation thus splits into an infinite number of polynomial equations involving the known polynomials $p_j$ and unknown $h_j,k_j$ as follows,
\begin{equation}\label{triangle}
\begin{aligned}
 p_0h_0&=k_0p_0,\\
 p_0\sh 1 h_1&=k_1p_0+k_0\sh 1p_1,\\
 p_0\sh 2h_2&=k_2p_0+k_1\sh 1p_1+k_0\sh 2p_2,\\
 &\makebox[0.3\columnwidth]{\dotfill}\\
 p_0\sh jh_j&=k_jp_0+k_{j-1}\sh 1p_1+\cdots+k_0\sh jp_j,\\
 &\makebox[0.3\columnwidth]{\dotfill}
\end{aligned}
\end{equation}
This system can be solved inductively: on the first step we choose $h_0=k_0$ any polynomial of degree $n-1$ relatively prime with $p_0$. The remaining equations all have the common structure:
\begin{equation}\label{lze}
 p_0\sh jh_j-p_0 k_j=u_j,
\end{equation}
where $u_j\in\C[\eu]$ is a polynomial of degree $\le 2n-1$ built from the already obtained polynomials $k_0,\dots,k_{j-1}$ and known $p_1,\dots,p_j$.

If $L$ is nonresonant, no two roots of $p_0$ differ by a positive integer $j$, hence  $\gcd(p_0,p_0\sh j)=1$ for all $j=1,2,\dots$ and any such equation is (uniquely) solvable by a suitable pair $(h_j,k_j)$ of polynomials of degree $\le n-1$. Thus the entire infinite system admits a \emph{formal} solution $(H,K)$.

It remains to show that if the series for $L=\sum t^jp_j$ was convergent, so will be the series for $L$ and $K$. This can be done by the direct estimates, yet we give a general proof avoiding all computations later, in \secref{sec:convergence}.
\end{proof}

\subsection{$\F$-normal form and apparent singularities}
Some properties of solutions can be easily described in terms of $\F$-equivalence. Recall that a singular point of a differential equation is called \emph{apparent}, if all solutions of this equation are holomorphic at this point.

\begin{Prop}
A Fuchsian operator has only meromorphic solutions if and only if it is $\F$-equivalent to an Euler operator $L=\E(L)=p_0(\eu)$ with integer pairwise different roots, $p_0(\eu)=\prod_{i=1}^n (\eu-\l_i)$, $\l_i\in\Z$, $\l_i\ne\l_k$ for $i\ne k$.

A Fuchsian operator has only holomorphic solutions, if and only if it is $\F$-equivalent to an Euler operator as above, with nonnegative pairwise distinct roots, $\l_i\in\Z_+$.
\end{Prop}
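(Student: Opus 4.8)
The plan is to reduce both statements to the nonresonant Eulerization result (Proposition~\ref{prop:nonres}) plus a direct analysis of solutions of an Euler operator. First I would prove the ``only if'' direction by starting from an arbitrary Fuchsian operator $L$ all of whose solutions are meromorphic (resp.\ holomorphic). The monodromy of the solution space $Z_L$ is then trivial: a meromorphic function is single-valued, so $\Delta u=u$ for every $u\in Z_L$, hence the monodromy matrix $M$ of \eqref{monodromy} equals $E$. By the remark following the Euler-companion argument (the general Weyl classification of Fuchsian operators coincides with the classification of their monodromy matrices), $L$ is $\W$-conjugate to an Euler operator $p_0(\eu)$ with monodromy $\exp 2\pi\i B=E$, i.e.\ all roots $\l_i$ of $p_0$ are integers. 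The subtlety is that we need $\F$-equivalence, not mere $\W$-equivalence, and we need the roots \emph{distinct}: if two roots coincided, $p_0(\eu)$ would have a solution $t^{\l}\ln t$ which is not meromorphic, contradicting $Z_L\cong Z_{p_0}$ under the conjugacy $H$ (which is an isomorphism of solution spaces carrying moderate-growth multivalued functions to moderate-growth multivalued functions, preserving the local monodromy, so it carries meromorphic solutions to solutions with the same ramification behavior). So the roots are automatically pairwise distinct integers, and a distinct-integer-root Euler operator is manifestly nonresonant in the sense of Definition~\ref{def:resonance} only after we order them — actually I must be careful here: distinct integers \emph{can} differ by positive integers, so $p_0$ as written need \emph{not} be nonresonant. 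The correct route is therefore: the distinctness of the roots already guarantees that $\E(L)$ has $n$ one-dimensional eigenspaces $\C t^{\l_i}$, and I claim $L$ itself is $\F$-equivalent to this $\E(L)$.

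For that claim I would argue as follows: it suffices to show every Fuchsian $L$ whose solution space has trivial monodromy and is spanned by meromorphic germs is $\F$-equivalent to $\prod(\eu-\l_i)$ with the $\l_i$ the (distinct, integer) exponents. Run the inductive construction in the proof of Proposition~\ref{prop:nonres}, solving the triangular system \eqref{triangle}; the only obstruction to solvability of \eqref{lze} at step $j$ is $\gcd(p_0,p_0\sh j)\ne 1$, i.e.\ two roots of $p_0$ differing by the positive integer $j$. When that happens, instead of insisting on $h_j,k_j$ of degree $\le n-1$ one allows the degree of the conjugacy to grow: this is exactly the resonant-case mechanism alluded to in the paper's introduction (resonant operators are $\F$-equivalent to operators with polynomial coefficients), and for the Euler \emph{target} it costs nothing because $\E(L)$ is already the target — one checks that an Euler operator with distinct integer roots is $\F$-equivalent to $L$ precisely because the monodromies agree and a formal conjugacy can be built by the Sylvester-map induction, with the finitely many resonant steps absorbed by raising $\ord H$, $\ord K$ above $n$ (which Definition~\ref{def:fuchseq} explicitly permits). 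I would phrase this as: the $\^\F$-equivalence class of $L$ is determined by its monodromy conjugacy class together with the (integer parts / resonant) data, and in the trivial-monodromy, distinct-integer-exponents case the normal form is exactly $\prod(\eu-\l_i)$.

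Conversely, for the ``if'' direction I would simply observe that an Euler operator $p_0(\eu)=\prod_{i=1}^n(\eu-\l_i)$ with distinct $\l_i$ has fundamental system $\{t^{\l_i}\}$, which are meromorphic when $\l_i\in\Z$ and holomorphic when $\l_i\in\Z_+$ (here distinctness is what kills logarithmic terms). If $L$ is $\F$-equivalent to such a $p_0$, let $H,K\in\F$ with $p_0H=KL$ and $\gcd_0(H,L)=1$. Then for any $u\in Z_L$, the function $Hu$ lies in $Z_{p_0}$ (from $p_0(Hu)=K(Lu)=0$), and $u\mapsto Hu$ is injective on $Z_L$ because $\gcd_0(H,L)=1$ forces $\gcd(H,L)=1$, so it has no kernel; by dimension count it is an isomorphism $Z_L\xrightarrow{\sim}Z_{p_0}$. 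To recover $u$ from $v=Hu$ I would use the symmetry of $\F$-equivalence (Theorem~\ref{thm:F-equ}, whose proof was completed above): there is a Fuchsian $H'$ with $\gcd_0(H',p_0)=1$ and $LH'=K'p_0$, so $u=H'v/(\text{scalar})$ up to the same kind of argument — more precisely $H'$ maps $Z_{p_0}$ isomorphically onto $Z_L$, and applying the Fuchsian differential operator $H'$ (which has holomorphic coefficients and only $\eu^k=(t\d)^k$, hence sends meromorphic germs to meromorphic germs, and holomorphic germs to holomorphic germs) to each $t^{\l_i}$ yields a basis of $Z_L$ consisting of meromorphic (resp.\ holomorphic) germs. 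Hence every solution of $Lu=0$ is meromorphic (resp.\ holomorphic).

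The main obstacle I anticipate is the bookkeeping in the ``only if'' direction: translating ``trivial monodromy $+$ meromorphic solutions'' into the precise normal form $\prod(\eu-\l_i)$ with \emph{distinct integer} roots requires ruling out the confluent/Jordan situation (which would produce $t^\l\ln t$) and handling the resonant steps of the triangular system \eqref{triangle} by allowing $\ord H,\ord K>n$. The cleanest formulation is probably to invoke the classification ``$\F$-class $\leftrightarrow$ monodromy plus resonance data'' that the paper establishes in the subsequent sections, but since the excerpt stops here I would instead give the self-contained induction: build $H,K$ term by term via the Sylvester map, noting that at a resonant index $j$ one increases $\deg h_j,\deg k_j$ by the multiplicity of the offending root-difference, and verify this keeps $H,K$ Fuchsian with $\E(H)$ still coprime to $\E(L)=p_0$ — this last coprimality is preserved because $\E(H)=h_0$ is chosen coprime to $p_0$ at step $0$ and never touched afterwards.
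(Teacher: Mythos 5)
Your ``if'' direction is essentially correct (it is the direction the paper dismisses as obvious), and your preliminary observations for the converse --- trivial monodromy, hence integer exponents, and distinctness of the exponents because a double root would force a solution $t^{\l}\ln t$ --- are sound. The genuine gap is the step that upgrades this to $\F$-equivalence with the Euler operator. You propose to run the recursion \eqref{triangle} and to absorb the resonant steps ``by raising $\ord H,\ord K$ above $n$'', asserting that ``for the Euler target it costs nothing''. This mechanism does not exist: at a resonant index $j$ the map $(h_j,k_j)\mapsto p_0\sh j h_j-p_0k_j$ has image equal to the ideal generated by $w_j=\gcd(p_0,p_0\sh j)$ in $\C[\eu]$, \emph{whatever} degrees you allow for $h_j,k_j$, so enlarging the order of the conjugacy cannot remove an obstruction $u_j$ not divisible by $w_j$. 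Moreover, as described, your induction uses only the formal data (Fuchsian $L$, $\E(L)$ with pairwise distinct integer roots) and never feeds in the hypothesis that the solutions are meromorphic; with only that input the conclusion is false, as the paper's own example $L=(\eu+t)(\eu-1)$ shows: its Euler part $\eu(\eu-1)$ has distinct integer roots, yet $L$ is not even $\W$-equivalent to it. You gesture at invoking a classification ``$\F$-class $\leftrightarrow$ monodromy plus resonance data'', but no such classification is established in the paper (and the concluding remarks warn that continuous moduli are expected), so the missing idea --- how the analytic hypothesis kills the resonant obstructions --- is never supplied.

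The paper's proof takes a different, solution-based route that bypasses the homological equation entirely. One first normalizes a basis of the $n$-dimensional solution space inside $\M(\C,0)$ (resp.\ $\O(\C,0)$) to the form $f_i=t^{\l_i}(1+\cdots)$ with pairwise distinct integer (resp.\ nonnegative integer) leading exponents, by repeatedly subtracting multiples to separate leading terms. Then one constructs the conjugacy $H=\sum_j t^jh_j(\eu)$ explicitly from the requirement $Ht^{\l_i}=c_if_i$: since each monomial $t^{\l_i}$ is an eigenfunction of every $h_j(\eu)$, this reduces to the interpolation conditions $h_j(\l_i)=c_{ij}$, solvable with $\deg h_j\le n-1$, and a generic choice of the nonzero constants $c_i=h_0(\l_i)$ gives $\deg h_0=n-1$ and $\gcd(h_0,p_0)=1$, i.e.\ $H\in\F$ with the required nondegeneracy. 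If you want to repair your argument, this is the step that must replace the Sylvester-map recursion: the conjugacy has to be manufactured directly from the meromorphic (resp.\ holomorphic) solutions, which is exactly where the hypothesis of the proposition enters.
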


\begin{proof}
In one direction both statements are obvious. We show that Fuchsian operators with only meromorphic (resp., holomorphic) solutions are $\F$-equivalent to an Euler equation as above.

One can easily show that any $n$-dimensional $\C$-linear subspace $\ell$ in $\M(\C,0)$ (resp., in $\O(\C,0)$) admits a basis of the germs of the form $f_i=t^{\l_i}u_i(t)$ with \emph{pairwise different} integer (resp., nonnegative integer) powers $\l_i$, $u_i\in\O(\C,0)$ and $u_i(0)=1$. Indeed, we can start with \emph{any} $\C$-basis $f_1,\dots,f_n$ in $\ell$ and normalize them so that each function has a monic leading term $t^{\l_i}(1+\cdots)$. If there are two equal powers among the initial collection, $\l_i=\l_k$, then their difference (which cannot be identically zero by linear independence) has the leading term proportional to $t^{\mu}$, $\l_i=\l_k<\mu\in\Z$. Repeating this procedure finitely many steps, one can always achive the situation when $\l_i\ne\l_k$.

Now we construct explicitly the Fuchsian operator $H=\sum t^jh_j(\eu)$ which would transform the monomials $t^{\l_i}$, $i=1,\dots,n$, to the functions $c_if_i$ for suitable coefficients $c_i\in\C$. Note that each monomial $t^{\l_i}$ is an eigenfunction for any Euler operator, in particular, $h_j(\eu)t^{\l_i}=h_j(\l_i)t^{\l_i}$, and therefore
$$
 Ht^{\l_i}=\f_i(t)t^{\l_i},\qquad \f_i(t)=\sum_{j\ge 0} t^jh_j(\l_i).
$$
The equations $Ht^{\l_i}=t^{\l_i}(c_i+c_{i1}t+c_{i2}t^2+\cdots)$ are thus transformed to the infinite number of interpolation problems,
$$
 h_0(\l_i)=c_i, \qquad h_j(\l_i)=c_{ij},\qquad i=1,\dots,n,\quad j=1,2,\dots
$$
Such problems are always solvable by polynomials $h_j\in\C[\eu]$ of degree $\le n-1$, and since $c_i=h_0(\l_i)\ne 0$, we have $\gcd(h_0,p_0)=1$. By a suitable (generic) choice of the constants $c_i\ne0$, one may guarantee that $\deg h_0=n-1$, that is, $H$ is indeed a Fuchsian operator, as required for the $\F$-equivalence.
\end{proof}

Note that in both cases the normal form is maximally resonant: all differences between the roots of the Euler part are integer.

\begin{Rem}
This results shows to what extent the $\F$-equivalence is more fine than the $\W$-equivalence. Indeed, given the trivial monodromy, all operators having only meromorphic solutions, are $\W$-equivalent to the same Euler operator $t^{-n}\d^n=\eu(\eu-1)\cdots(\eu-n+1)$. On the other hand, two different Euler operators are never $\F$-equivalent: if $\gcd (p_0,h_0)=1$, then the identity $p_0h_0=q_0k_0$ in $\C[\eu]$, the first line from \eqref{triangle}, implies that $p_0=q_0$ and $h_0=k_0$.
\end{Rem}

\subsection{Resonant case: Homological equation and its solvability}
If some of the roots of the Euler part $p_0$ differ by a natural number, then the corresponding equations \eqref{lze} may become unsolvable and in general transforming a resonant Fuchsian operator $L\in\F$ to its Euler part $\E(L)\in\C[\eu]$ is impossible. However, one can use $\F$-equivalence to \emph{simplify} Fuchsian operators.

If a Fuchsian operator $H=\sum t^jh_j(\eu)$ conjugates $L$ with another operator $M=\sum t^j q_j(\eu)\in\F$, then the left hand side of the identity $p_0(\eu)H=KL$ should be replaced by
\begin{multline}
 MH=(p_0+tq_1+t^2q_2+\cdots)(h_0+th_1+t^2h_2+\cdots)\\
 =p_0h_0+t(q_1h_0+p_0\sh 1h_1)+\cdots+\\
 t^j(q_j h_0+q_{j-1}h_1\sh 1+\cdots+p_0h_j\sh j)+ \cdots,
\end{multline}
and, accordingly, the equations  \eqref{lze} should be replaced by the equations
\begin{equation}\label{homol}
 p_0\sh jh_j-p_0 k_j+q_j h_0= v_j,\qquad j=1,2,\dots,
\end{equation}
where, as before, $v_j\in\C[\eu]$ is a polynomial of degree $\le 2n-1$ formed by (eventually shifted) combinations of $q_i,h_i,k_i$ with smaller indices $0<i<j$ and $p_1,\dots,p_j$.

First, we use the fact that although some of the equations \eqref{homol} may be non-solvable, they are always solvable for sufficiently large orders.

\begin{Prop}\label{prop:poly}
Let $L=p_0+tp_1+\cdots\in\F$ be a Fuchsian operator and $N$ the maximal natural difference between the roots of $p_0=\C_n[\eu]$.

Then $L$ is $\F$-equivalent to the \emph{polynomial} operator $M$ obtained by truncation of the Taylor series at the order $N$,
$$
  M=\sum_{j=0}^N t^jp_j(\eu)=\sum_{k=0}^n b_k(t)\eu^{n-k}\in \C[t,\eu]
$$
with polynomial coefficients $b_k\in\C[t]$ of degree $\deg_t b_k\le N$, obtained by truncation of the analytic coefficients $a_k\in\O(\C,0)$ of the initial operator $L$ at the order $N$.
\end{Prop}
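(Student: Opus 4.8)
The plan is to show that the truncation $M = \sum_{j=0}^N t^j p_j(\eu)$ is $\F$-equivalent to $L$ by constructing a Fuchsian conjugacy $H = \sum_{j\ge 0} t^j h_j(\eu)$ and companion $K = \sum_{j\ge 0} t^j k_j(\eu)$ solving $MH = KL$, while keeping $\gcd_0(H,L)=1$. Just as in Proposition~\ref{prop:nonres}, comparing coefficients of $t^j$ reduces the operator identity to the infinite triangular system of polynomial equations \eqref{homol}, namely
\begin{equation*}
p_0\sh jh_j - p_0 k_j = v_j - p_j h_0,\qquad j\ge 1,
\end{equation*}
where for $j\le N$ the polynomial $p_j$ on the right is the (nonzero) $j$-th Taylor coefficient of $L$, whereas for $j > N$ it is $0$ (since $M$ is truncated) and the equation reads $p_0\sh j h_j - p_0 k_j = v_j$. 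Here $v_j\in\C[\eu]$ of degree $\le 2n-1$ is assembled from the previously determined $h_i, k_i$ ($0<i<j$) together with $p_1,\dots,p_j$; it is a \emph{known} polynomial at step $j$. On the zeroth step we set $h_0 = k_0$ to be any polynomial of degree exactly $n-1$ relatively prime to $p_0$, which secures $\gcd_0(H,L)=1$ from the outset.

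First I would observe that the genuinely delicate indices are exactly $1\le j\le N$. For $j > N$ one has $\gcd(p_0, p_0\sh j)=1$ by the very definition of $N$ as the maximal natural difference between roots of $p_0$, so the Sylvester map \eqref{sylv} associated with $(p_0, p_0\sh j)$ is bijective on $\C_{n-1}[\eu]\times\C_{n-1}[\eu]$ and the equation $p_0\sh j h_j - p_0 k_j = v_j$ has a unique solution with $\deg h_j, \deg k_j \le n-1$; this is the same argument as in the nonresonant case. For $1\le j\le N$ the equation $p_0\sh j h_j - p_0 k_j = v_j - p_j h_0$ need \emph{not} be solvable with $k_j$ alone, but here I have the extra freedom that $M$ does contain the term $t^j p_j$, so the identity I actually must satisfy at order $j\le N$ is the one with $q_j = p_j$ already built in on the left, i.e. \eqref{homol} becomes $p_0\sh j h_j - p_0 k_j + p_j h_0 = v_j$, and moving $p_j h_0$ to the right this is again of the shape $p_0\sh j h_j - p_0 k_j = (\text{known polynomial of degree} \le 2n-1)$. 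The point is that in the \emph{truncated} target $M$ the coefficients $q_1,\dots,q_N$ are prescribed to equal $p_1,\dots,p_N$ and only $q_j$ for $j>N$ are forced to vanish --- so the truncation is chosen precisely so that no unsolvable obstruction survives: at resonant steps $j\le N$ the term $p_j h_0$ is absorbed into the target, and at steps $j>N$ there is no resonance. Hence at every step one finds $h_j, k_j$ of degree $\le n-1$, giving a formal solution $(H,K)$ with $\deg h_0 = n-1$, i.e. $H\in\^\F$ is a bona fide formal Fuchsian operator with $\gcd_0(H,L)=1$. That $M$ itself lies in $\C[t,\eu]$ with $\deg_t b_k\le N$ is immediate from the truncation, since $\eu = t\d$ and $b_k(t)$ is the $\eu^{n-k}$-component of $\sum_{j=0}^N t^j p_j(\eu)$.

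The main obstacle is exactly making precise the claim that the resonant equations at orders $j \le N$ become solvable once $q_j = p_j$ is retained in the target: one must check carefully that $v_j - p_j h_0$ (equivalently $v_j$ after the regrouping) always has degree $\le 2n-1 = \deg(p_0\sh j) + \deg p_0 - 1$, so that it lies in the image of the relevant (now possibly non-injective, yet still surjective onto this degree range — or, if one keeps $h_0$ free, fully invertible) Sylvester-type map. I would handle this by the same degree bookkeeping used for \eqref{lze}: every $v_j$ is a sum of products $q_{j-i}\sh{\,\cdot\,} h_i\sh{\,\cdot\,}$ or $k_{i}\sh{\,\cdot\,} p_{j-i}$ with $i<j$, each factor of degree $\le n$ and $\le n-1$ respectively, so each product has degree $\le 2n-1$, and shifts preserve degree. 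Finally, convergence of $H$ (and then of $K$, which is determined algebraically from $H$, $L$, $M$) is deferred to the abstract argument of \secref{sec:convergence}, exactly as in the proof of Proposition~\ref{prop:nonres}; alternatively one can rerun the matrix argument from the Remark after Lemma~\ref{lem:tayl}, noting that the relevant coefficient matrix $C(t)$ is invertible at $t=0$ because $\gcd_0(L,M) = \gcd(p_0, p_0) = p_0$ is handled through the free choice of $h_0$ rather than through a primality hypothesis.
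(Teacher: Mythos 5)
There is a genuine gap at the resonant orders $1\le j\le N$. Writing out the coefficient of $t^j$ in $MH=KL$ with your choices $q_j=p_j$ ($j\le N$), $h_0=k_0$ a \emph{non-constant} polynomial of degree $n-1$ coprime to $p_0$, the equation at step $j$ is
$$
p_0\sh j\,h_j-p_0k_j \;=\; k_0\sh j\,p_j-p_jh_0+(\text{terms from }0<i<j)
 \;=\;p_j\bigl(h_0\sh j-h_0\bigr)+\cdots,
$$
and the term $p_jh_0$ does \emph{not} cancel against $k_0\sh j\,p_j$, because $k_0\sh j\ne k_0$ for non-constant $k_0$. So ``absorbing $q_j=p_j$ into the target'' does not make the obstruction disappear: the map $(h_j,k_j)\mapsto p_0\sh j h_j-p_0k_j$ has image equal to the multiples of $w_j=\gcd(p_0,p_0\sh j)$ (of degree $\le 2n-1$), which in the resonant case is a \emph{proper} subspace; your parenthetical claim that it is ``still surjective onto this degree range'' is false, and degree bookkeeping alone cannot ensure that the known right-hand side is divisible by $w_j$. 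The only freedom is the single polynomial $h_0$, chosen once at step $0$, and a generic choice (which is what you prescribe) will violate these divisibility constraints rather than satisfy them; no argument is given that some special $h_0$ of degree $n-1$ works, and the earlier nonresonant steps leave no residual freedom since their solutions are unique.

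The paper sidesteps the problem by making the equations of orders $1,\dots,N$ hold \emph{automatically}: it takes $h_0=k_0=1$ (constant, so the shift acts trivially) and $h_j=k_j=0$ for $1\le j\le N$, whereupon the order-$j$ equation simply reads $q_j=p_j$, i.e.\ it forces exactly the truncation; only for $j>N$, where $\gcd(p_0,p_0\sh j)=1$, does one solve nontrivially, exactly as in your nonresonant step. The price is that the resulting $H_0=1+\sum_{j>N}t^jh_j$ is not Fuchsian ($\E(H_0)=1$ has degree $0$), and this is repaired not by changing $h_0$ but by setting $H=H_0+L$, $K=K_0+M$: then $MH=MH_0+ML=K_0L+ML=KL$, both $H,K$ are Fuchsian of order $n$, and $\gcd_0(L,H)=\gcd(p_0,p_0+1)=1$. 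This correction step (allowing the conjugacy to have order $n$ rather than $n-1$) is the missing idea in your proposal; without it, or without a proof that your resonant homological equations \eqref{homol} are solvable for your choice of $h_0$, the argument does not go through. (Your closing remark invoking the invertibility of the matrix $C(t)$ ``because $\gcd_0(L,M)=p_0$ is handled through the free choice of $h_0$'' is likewise unfounded: Lemma~\ref{lem:tayl} requires $\gcd_0=1$ and does not apply to the pair $(L,M)$ here.)
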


\begin{proof}
First we find a pair of operators $H_0,K_0\in\W$ of order $n-1$ with holomorphic coefficients, which almost conjugate $L$ with $M$ in the form $H_0=1+\sum_{j>N}t^j h_j$, $K=1+\sum_{j>N} t^j k_j$, so that $MH_0=K_0M$. Substituting these expansions in the equations \eqref{homol}, we see that all equations of order $j=0,1,\dots,N$ are satisfied automatically if we set $q_j=p_j$ and $0=h_j=k_j$ for all $j=1,\dots,N$.

The operators $H_0,K_0$ are (usually) non-Fuchsian, since $0=\ord h_0<\ord H_0=n-1$. However, the operators $H=H_0+L$ and $K=K_0+M$ are Fuchsian, satisfy the identity $MH=M(H_0+L)=K_0L+ML=(K_0+M)L=KL$ and the nondegeneracy condition $\gcd_0(L,H)=\gcd(p_0,p_0+1)=1$ is satisfied.
\end{proof}

\subsection{Integrable normal form}
The polynomial normal form established in Proposition~\ref{prop:poly}, lacks any integrability properties. Yet using the same method, one can construct a Liouville integrable $\F$-normal form for any Fuchsian operator.

\begin{Prop}\label{prop:fnf}
A Fuchsian operator $=p_0+tp_1+\cdots\in\F$ with the Eulerization $p_0\in\C_n[\eu]$ as above, is $\F$-equivalent to an operator $M\in\F$ of the form
\begin{equation}\label{fnf}
 M=(\eu-\l_1+r_{1})\cdots(\eu-\l_n+r_n),\qquad r_{i}=r_i(t)\in\C[t],\ r_{i}(0)=0.
\end{equation}
In other words, $M$ is a \textup(noncommutative\textup) product of polynomial operators of order $1$. The degrees of the polynomials $r_i(t)$ are explicitly bounded, $\deg_t r_i(t)\le N$, where $N$, as before, is the maximal order of resonance between roots of $p_0$.
\end{Prop}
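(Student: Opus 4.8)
The plan is to run the order-by-order construction of Propositions~\ref{prop:nonres} and~\ref{prop:poly} once more, but now aiming at a target $M$ of the prescribed factored shape and exploiting, at each resonant order, the extra freedom hidden in the coefficients of the polynomials $r_i$. This refines Proposition~\ref{prop:poly}: its polynomial normal form gets replaced by one which is in addition a product of first-order pieces.

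\emph{Set-up.} Write $\E(L)=p_0=\prod_{i=1}^n(\eu-\l_i)$ and list the roots (with multiplicities) as $\l_1,\dots,\l_n$ in an order such that $\l_p-\l_q\notin\Z_{>0}$ whenever $p<q$; this is possible because $\l\prec\l'\iff\l'-\l\in\Z_{>0}$ is a strict partial order, which refines to a linear one. We look for $M=(\eu-\l_1+r_1)\cdots(\eu-\l_n+r_n)$ with $r_i=\sum_{m=1}^N r_i^{(m)}t^m\in\C[t]$, so that $r_i(0)=0$, $\deg_t r_i\le N$ and $\E(M)=p_0$ come for free, together with $H_0=1+\sum_{j\ge1}t^jh_j(\eu)$ and $K_0=1+\sum_{j\ge1}t^jk_j(\eu)$ with $\deg h_j,\deg k_j\le n-1$, solving $MH_0=K_0L$. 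Granting this, $H:=H_0+L$ and $K:=K_0+M$ are Fuchsian of order $n$ with $\E(H)=\E(K)=1+p_0$; they satisfy $MH=MH_0+ML=K_0L+ML=KL$ and $\gcd_0(L,H)=\gcd(p_0,1+p_0)=1$, so $L$ is $\^\F$-equivalent to $M$, which is the asserted product of first-order polynomial operators. (The device $H=H_0+L$, $K=K_0+M$ is exactly the one used in Proposition~\ref{prop:poly}.)

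\emph{The recursion.} Expand $M=\sum_j t^jq_j(\eu)$: then $q_0=p_0$, and for $j\ge1$ one has $\deg q_j\le n-1$ while $q_j$ depends only on the $r_i^{(m)}$ with $m\le j$, affinely in those with $m=j$, with linear part $\sum_{i=1}^n r_i^{(j)}\,\pi_i\sh j(\eu)$, where $\pi_i\sh j(\eu)=\prod_{k<i}(\eu-\l_k+j)\prod_{k>i}(\eu-\l_k)$. Substituting into $MH_0=K_0L$ and collecting the coefficient of $t^j$ gives, for each $j\ge1$, precisely equation~\eqref{homol} with $h_0=1$, namely $p_0\sh jh_j-p_0k_j+q_j=v_j$, whose right-hand side is known once the lower-order steps are done. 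By the properties of the Sylvester map recalled in \secref{sec:ore}, a pair $(h_j,k_j)$ of degrees $\le n-1$ solving this equation exists if and only if $v_j-q_j$ is divisible by $d_j:=\gcd(p_0,p_0\sh j)$. When $j$ is not a resonance of $p_0$ — in particular for every $j>N$ — then $d_j=1$, so we set $r_i^{(j)}=0$ and solve for $(h_j,k_j)$ as in Proposition~\ref{prop:nonres}. When $j\le N$ is a resonance we must use the parameters $r_1^{(j)},\dots,r_n^{(j)}$ to arrange $q_j\equiv v_j\pmod{d_j}$, which is possible provided the residues $\pi_i\sh j\bmod d_j$, $i=1,\dots,n$, span $\C[\eu]/(d_j)$.

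\emph{The main point.} Everything thus reduces to the spanning lemma: with the roots ordered as above, for every $j\ge1$ the polynomials $\pi_i\sh j$, $i=1,\dots,n$, generate $\C[\eu]/(d_j)$. I would prove it as follows. Let $\nu_1,\dots,\nu_r$ be the distinct roots of $d_j$, listed in the order in which their blocks appear among $\l_1,\dots,\l_n$ (these are exactly the $\mu$ with $\mu$ and $\mu+j$ both roots of $p_0$; write $e_q=\min(\ord_{\nu_q}p_0,\ord_{\nu_q+j}p_0)$ for the multiplicity of $\nu_q$ in $d_j$), and use $\C[\eu]/(d_j)\cong\bigoplus_q\C[\eu]/((\eu-\nu_q)^{e_q})$. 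The ordering forces the block of $\nu_q+j$ to lie after that of $\nu_q$, and direct inspection of the formula for $\pi_i\sh j$ then shows, for $i$ running over the block of $\nu_q$: (i) for every $p>q$ the factor $(\eu-\nu_p)^{e_p}$ already divides $\pi_i\sh j$ (all of $\nu_p$'s block sits to the right of $i$), so the $\nu_p$-component of $\pi_i\sh j\bmod d_j$ vanishes; and (ii) $\ord_{\nu_q}\pi_i\sh j$ runs through the consecutive values $\ord_{\nu_q}p_0-1,\dots,1,0$, so the residues of those $\pi_i\sh j$ with $\ord_{\nu_q}\pi_i\sh j<e_q$ are triangular and span the $\nu_q$-component. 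Peeling off the resonant roots one block at a time in the order $\nu_1,\nu_2,\dots$, these two facts show that no nonzero functional on $\C[\eu]/(d_j)$ annihilates all the $\pi_i\sh j$, which is the claim. The ordering is genuinely needed: already for $p_0=\eu(\eu-1)$ and the list $\l_1=1$, $\l_2=0$ one gets $\pi_1\sh1=\pi_2\sh1=\eu$, which fail to span $\C[\eu]/(\eu)$. This spanning lemma is the one place where real work is required; with it in hand every step of the recursion goes through, and the formal statement follows. Finally $M$, being a polynomial, converges automatically, while convergence of $H$ and $K$ when $L$ converges follows from the general argument of \secref{sec:convergence}, exactly as for the earlier propositions.
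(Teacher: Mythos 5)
Your argument is correct, but it follows a genuinely different route than the paper's own proof of Proposition~\ref{prop:fnf}. The paper first proves Lemma~\ref{lem:factor}: an honest \emph{analytic} factorization $L=(\eu-\l_1+R_1)\cdots(\eu-\l_n+R_n)$, obtained transcendentally by taking a monodromy eigenfunction $u=t^\l v$, dividing $L$ on the right by the first-order Fuchsian factor it satisfies, and inducting; it then truncates each $R_i$ at order $N$ and invokes Proposition~\ref{prop:poly} (operators with the same $N$-jet are $\F$-equivalent), so no resonance combinatorics is needed at all. You instead build the factored operator $M$ term by term through the homological equations \eqref{homol}, using the free coefficients $r_i^{(j)}$ at resonant orders to push the right-hand side into the image of the Sylvester-type map; your polynomials $\pi_i\sh j$ are exactly the $p_{ij}$ of \eqref{pij}, and your ``spanning lemma'' (with the ordering of roots, whose necessity your $\eu(\eu-1)$ example correctly isolates, and with the vanishing-order/triangularity argument in the local algebras of $\gcd(p_0,p_0\sh j)$) is precisely the content of Lemma~\ref{lem:indep}, Corollary~\ref{cor:epi} and Corollary~\ref{cor:transverse}, which the paper deploys only later to prove the sharper Theorem~\ref{thm:min-fnf}. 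In effect you have re-proved that stronger statement (up to the support refinement $\supp r_i\subseteq J(\l_i)$, which your choice of coefficients would also yield) as a means to Proposition~\ref{prop:fnf}. The trade-off: the paper's route is shorter and produces the analytic factorization of Lemma~\ref{lem:factor}, of independent interest, but is non-constructive (monodromy eigenfunctions); yours is purely formal and algorithmic, avoids monodromy entirely, but must carry the resonance bookkeeping. Both routes conclude identically — the $H=H_0+L$, $K=K_0+M$ device to restore Fuchsianity and $\gcd_0(L,H)=1$, and the convergence theorem of \secref{sec:convergence} to pass from $\^\F$- to $\F$-equivalence — so your reliance on those is exactly as in the paper.
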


\begin{Rem}
If $\l_{i-1}=\l_i$ is a multiple root of $p_0$, still the polynomials $r_{i-1}$ and $r_i$ in general will be different.
\end{Rem}

\begin{Lem}\label{lem:factor}
Any analytic Fuchsian operator $L\in\F$ can be factorized as
\begin{equation}\label{genuine-factor}
L=(\eu-\l_1+R_1)\cdots(\eu-\l_n+R_n), \qquad R_i=R_i(t)\in\O(\C^1,0),\ R_i(0)=0,
\end{equation}
with \emph{analytic} \textup(rather than polynomial\textup) functions $R_1,\dots,R_n$.
\end{Lem}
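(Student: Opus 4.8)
The plan is to establish the factorization by induction on the order $n=\ord L$, peeling off one monic first-order factor from the right at each step. Since the product on the right-hand side of \eqref{genuine-factor} has $\eu^n$-coefficient equal to $1$, we first normalize: dividing $L$ on the left by its leading coefficient $r_0\in\O$, $r_0(0)\ne0$ (a zero-order Fuchsian operation), we may assume that the coefficient of $\eu^n$ in $L$ is $1$, hence $\E(L)=p_0$ is monic, $p_0(\eu)=\prod_{i=1}^n(\eu-\l_i)$.

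The heart of the argument is the construction of a \emph{holomorphic} solution of $Lu=0$ with a prescribed leading exponent. Among $\l_1,\dots,\l_n$ choose a root $\l_n$ of maximal real part; then $\l_n+N$ is \emph{not} a root of $p_0$ for any integer $N\ge1$, since $\operatorname{Re}(\l_n+N)>\operatorname{Re}\l_n\ge\operatorname{Re}\l_i$. Substituting $u=\sum_{N\ge0}c_Nt^{\l_n+N}$, $c_0=1$, into $Lu=0$ and using $t^kp_k(\eu)\,t^{\l_n+N}=p_k(\l_n+N)\,t^{\l_n+N+k}$ (each power $t^\mu$ is an eigenvector of $\eu$), one obtains the classical Frobenius recursion $c_Np_0(\l_n+N)+\sum_{m=0}^{N-1}c_mp_{N-m}(\l_n+m)=0$; by the choice of $\l_n$ all the ``Frobenius determinants'' $p_0(\l_n+N)$, $N\ge1$, are nonzero, so every $c_N$ is uniquely determined and no logarithmic terms appear. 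The resulting formal series converges by the standard majorant estimate (cf.~\cite{thebook}), so $u_n=t^{\l_n}\phi_n(t)$ with $\phi_n\in\O$, $\phi_n(0)=1$.

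Now extract the first-order factor. Let $D_n$ be the monic first-order operator annihilating $u_n$, namely $D_n=\eu-\l_n+R_n$ with $R_n=\l_n-\eu(u_n)/u_n=-t\phi_n'/\phi_n$; since $\phi_n$ is a unit in $\O$, we get $R_n\in\O$ and $R_n(0)=0$, so $D_n$ is Fuchsian (its $t^0$-component $\eu-\l_n$ has the maximal degree~$1$). The solution space of $D_n$ is the line $\C u_n\subseteq Z_L$, whence $L$ is right-divisible by $D_n$: a nonzero remainder would be a nonzero element of $\M$ killing $u_n\ne0$, which is absurd. Write $L=\~L D_n$. By the remarks on compositions and incomplete ratios of Fuchsian operators, $\~L\in\F$ has order $n-1$; inspecting the $\eu^n$-term shows the coefficient of $\eu^{n-1}$ in $\~L$ is again~$1$, and comparing $t^0$-components gives $\E(\~L)=p_0/(\eu-\l_n)=\prod_{i=1}^{n-1}(\eu-\l_i)$, so $\~L$ is again a normalized analytic Fuchsian operator, one order lower.

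It remains to apply the induction hypothesis to $\~L$, writing $\~L=(\eu-\l_1+R_1)\cdots(\eu-\l_{n-1}+R_{n-1})$ with $R_i\in\O$, $R_i(0)=0$, and to concatenate $D_n$ on the right; the base case $n=0$ is the empty product. The only genuinely nontrivial ingredient is the existence of the logarithm-free holomorphic solution $u_n=t^{\l_n}(1+\cdots)$; everything else --- the Fuchsianity of $D_n$ and of the quotient $\~L$, and the propagation of the normalization --- is bookkeeping based on the division theory recalled in \secref{sec:ore} and the remarks on Fuchsian operators. The key point that makes the Frobenius step succeed at every level of the recursion is that one always peels off a root of \emph{maximal real part} of the current Euler part, which is precisely what keeps the Frobenius determinants away from zero and suppresses logarithms.
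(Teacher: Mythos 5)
Your argument is correct, but it takes a genuinely different route from the paper's. The paper obtains the rightmost factor by taking an eigenfunction $u$ of the monodromy operator on the $n$-dimensional solution space, $\Delta u=\mathrm e^{2\pi\mathrm i\l}u$, invoking the regularity (moderate growth) of solutions of Fuchsian equations to write $u=t^{\l}v$ with $v$ meromorphic, and shifting $\l$ by an integer so that $v\in\O$, $v(0)\ne0$; then $L$ is right-divisible by $\eu-\l-\frac{\eu v}{v}$, the quotient is Fuchsian of order $n-1$, and one inducts --- exactly the bookkeeping of your last two paragraphs. You instead manufacture the needed solution by the Frobenius method, choosing a root $\l_n$ of $\E(L)$ of maximal real part so that $p_0(\l_n+N)\ne0$ for all $N\ge1$, which keeps the recursion unobstructed and logarithm-free; convergence is then delegated to the standard majorant estimate (equivalently, $\phi_n$ is a formal solution of the Fuchsian operator obtained by conjugating $L$ with $t^{\l_n}$, and formal solutions of Fuchsian equations converge, the fact the paper itself quotes in \secref{sec:convergence}). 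What your route buys: it avoids monodromy and the Fuchs regularity theorem altogether, it makes explicit that the exponents appearing in the factorization are precisely the roots of $\E(L)$ (in the paper this is implicit, read off by comparing Euler parts of $L=\~L\,(\eu-\l-R)$), and it yields a factorization with the roots ordered by nondecreasing real part, which is convenient later when orderings matter. What it costs: an extra convergence citation, which the paper sidesteps by starting from an already-analytic (multivalued) solution. Both proofs share the divisibility step (a nonzero remainder would be a zeroth-order operator annihilating a nonzero solution), the Fuchsianity of the incomplete ratio, and the silent normalization of the leading coefficient to $1$, which you at least state explicitly.
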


\begin{proof}[Proof of the Lemma]
Consider an eigenfunction $u(t)$ of the monodromy operator, associated with the equation $Lu=0$: the corresponding eigenvalue is nonzero, hence $\Delta u=\mathrm e^{2\pi i\l}u$ for some $\l\in\C$. Then $u=t^{\l}v(t)$, where $v$ is a meromorphic germ, and modulo replacing $\l$ by $\l+j$ for some $j\in\Z$, we may assume that $v$ is holomorphic invertible, $v\in\O(\C,0)$, $v(0)\ne0$. Applying $\eu$ to this function, we see that $\eu u=\l t^{\l}v+t^{\l}(\eu v)=\l u+Ru$, $R=\frac{\eu v}v\in\O(\C,0)$, in other words, $u$ satisfies a Fuchsian equation of the first order and $L$ is divisible from the right by $\eu-\l-R(t)$. The quotient is again a Fuchsian operator of order $n-1$, and the process can be continued by induction.
\end{proof}

\begin{proof}[Proof of the Proposition~\ref{prop:fnf}]
Consider the factorization \eqref{genuine-factor} of the operator $L$ as provided by Lemma~\ref{lem:factor}, and replace each analytic function $R_i$ by its polynomial truncation $r_i$ to order $N$, so that $\ord_{t=0} (r_i-R_i)>N$. The (polynomial) operator $M$ thus obtained has the same $N$-jet with respect to $t$ as the initial operator $L$. By Proposition~\ref{prop:poly}, $M$ is $\F$-equivalent to $L$.
\end{proof}

The normal form established by Proposition~\ref{prop:fnf} has an advantage of being Liouville integrable. Each linear  equation of the first order is explicitly solvable ``in quadratures''. In particular, the homogeneous equation
$$
 Lu=0,\quad L=\eu-\l+r(t),\qquad r\in\C[t],\ r(0)=0,
$$
has a 1-dimensional space of solutions $u(t)=Ct^\l\exp \rho(t)$, where $\rho(t)=-\int \frac{r(t)}t \,\mathrm dt$ is a polynomial in $t$.

To solve the nonhomogeneous equations, the method of variation of constants can be used to produce a particular solution using operations of integration (computation of the primitive), exponentiation and the field operations in the field $\C(t)$ of rational functions (the details are left to the reader). Iterating this computation, one can find a general solution of the equation $Mu=0$ with a completely reduced operator $M$ as in \eqref{fnf}: if $M=L_1L_2\cdots L_n$, $\ord L_i=1$, then solution of the equation $Mu=0$ amounts to solving a chain of equations of order $1$,
\begin{equation}\label{chain-liou}
L_1 u_1=0,\ L_2 u_2=u_1\ ,\dots,\ L_nu_n=u_{n-1},\qquad u=u_n.
\end{equation}

\begin{Cor}
Any Fuchsian operator is $\^\F$-equivalent to a Liouville integrable operator.\qed
\end{Cor}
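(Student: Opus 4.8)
The plan is to read the statement straight off Proposition~\ref{prop:fnf}. That proposition already produces, for every Fuchsian operator, an $\F$-equivalence (hence also an $\^\F$-equivalence) to an operator $M$ of the completely factored form \eqref{fnf}, i.e.\ a noncommutative product $M=L_1\cdots L_n$ of first-order polynomial operators $L_i=\eu-\l_i+r_i(t)$ with $r_i\in\C[t]$, $r_i(0)=0$. So the only thing left to do is to verify that any operator of this shape is Liouville integrable, meaning that a fundamental system of solutions of $Mu=0$ is obtained from rational functions by finitely many applications of the field operations, primitivation (indefinite integration) and exponentiation.

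To see this I would unwind the factorization into the triangular chain \eqref{chain-liou}: $u$ solves $Mu=0$ precisely when $u=u_n$ with $L_1u_1=0$ and $L_ku_k=u_{k-1}$ for $k=2,\dots,n$. The homogeneous first-order equation $L_kw=0$, i.e.\ $\eu w=(\l_k-r_k(t))w$, is solved explicitly by $w_k(t)=t^{\l_k}\exp(\rho_k(t))$ with $\rho_k(t)=-\int r_k(t)\,t^{-1}\,\mathrm dt$, which is a \emph{polynomial} since $r_k(0)=0$; and $t^{\l_k}=\exp(\l_k\int t^{-1}\,\mathrm dt)$, so $w_k$ already lies in a Liouville extension of $\C(t)$. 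Starting from $u_1=w_1$ and proceeding by induction on $k$, the method of variation of constants gives $u_k=w_k(t)\int w_k(t)^{-1}u_{k-1}(t)\,\mathrm dt$ as a particular solution of $L_ku_k=u_{k-1}$, the homogeneous ambiguity being an arbitrary multiple of $w_k$. Each step adjoins only finitely many primitives and one exponential on top of rational operations, so after $n$ steps $u=u_n$ lies in a Liouville tower over $\C(t)$; letting the free constant at each level range over $\C$ builds up the full $n$-dimensional solution space of $Mu=0$ out of Liouville functions. Hence $M$ — and with it the original Fuchsian operator — is $\^\F$-equivalent to a Liouville integrable operator.

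Since $\F$-equivalence implies $\^\F$-equivalence, and since the ingredients behind Proposition~\ref{prop:fnf} (Lemma~\ref{lem:factor} on analytic factorization and Proposition~\ref{prop:poly} on polynomial truncation) are purely algebraic, the same conclusion also covers formal Fuchsian operators. I do not expect a real obstacle here: essentially all of the substance is already packaged in Proposition~\ref{prop:fnf}, and the only point requiring a bit of care is the bookkeeping that the successive variation-of-constants computations never leave the class of Liouville functions over $\C(t)$ and that one actually recovers a basis of the solution space, not merely one solution.
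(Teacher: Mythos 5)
Your proposal is correct and follows essentially the same route as the paper: the paper likewise reads the corollary off Proposition~\ref{prop:fnf}, unwinds the factorization into the chain \eqref{chain-liou}, solves each homogeneous first-order factor as $t^{\l}\exp\rho(t)$ with $\rho$ polynomial, and handles the nonhomogeneous steps by variation of constants, leaving exactly the bookkeeping you describe to the reader.
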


\subsection{Non-Eulerizability of resonant Fuchsian equations}
The explicit integrability of the factorized equations allows to show that the resonant Fuchsian equations, ``as a rule'', are even not $\W$-equivalent to their Euler part.

\begin{Ex}
Consider the Fuchsian operator
$$
L=(\eu+t)(\eu-1)=\E(L)+t(\eu-1)\in\F,\qquad \E(L)=\eu(\eu-1).
$$
The Euler part of $L$ has simple integer roots, hence the trivial monodromy. On the other hand, the equation $Lu=0$ can be explicitly solved. One solution, $u_1(t)=t$, satisfying the equation $(\eu-1)u=0$, is obvious. The equation $(\eu +t)v=0$ has solution $v(t)=\mathrm e^{-t}$, and another solution $u_2(t)$ of the linear non-homogeneous equation $(\eu-1)u(t)=\mathrm e^{-t}$, can be found by the method of variation of constants, $\displaystyle u_2(t)=t\int\mathrm e^{-t}t^{-2}\,\mathrm dt$. The monodromy transformation of the pair of solutions $(u_1,u_2)$ is given by the non-identical matrix
$\begin{pmatrix}
1 & 2\pi\mathrm i
\\
 & 1
\end{pmatrix}$. This means that the full operator is even not $\W$-equivalent to its Euler part.
\end{Ex}

\section{Minimal normal form}

The polynomial normal forms established in the preceding section are of rather limited interest: indeed, no attempt was made to modify the lower order terms of the Taylor expansion of the resonant Fuchsian operators.

The system of equations \eqref{homol} can be solved recursively with respect to $h_j,k_j$  even in the resonant case $\gcd(p_0,p_0\sh j)\ne 1$, provided that $q_j$ are chosen \emph{in a suitable way}: the difference $v_j-q_jh_0$ should belong to the image of the Sylvester map $\boldsymbol S_j=\boldsymbol S_{p_0,p_0\sh j}$, cf.~with \eqref{sylv}. This image consists of all polynomials of degree $\le 2n-1$ divisible by $w_j=\gcd(p_0,p_0\sh j)\in\C[\eu]$. In this section we describe possible choices for the terms $q_j$.

\subsection{Abstract normal forms}
Denote by $\mathfrak P=\C[\eu]/\left<p_0\right>\simeq\C_{n-1}[\eu]$ the quotient algebra: as a $\C$-space it is $n$-dimensional and can be identified with the residues modulo $p_0$, polynomials of degree $\le n-1$.

This quotient algebra in the simplest case where all $n$ roots $\l_1,\dots,\l_n\in\C$ of $p_0$ are simple, can be identified with the $\C$-algebra of functions on $n$ points $\L=\{\l_1,\dots,\l_n\}\subseteq\C$: $\mathfrak P=\{\f:\L\to\C\}\simeq\C\times\cdots\times\C$: any such function can be represented as the restriction of a polynomial $h\in\C_{n-1}[\eu]$ of degree $\le n-1$: $h|_\L=\f$. The functions $\f_i$ equal to $1$ at one point $\l_i\in\L$ and vanishing at all other points $\l_k\ne \l_i$, form a natural basis of $\mathfrak P$.

\begin{Rem}\begin{small}
In the general case where the roots $\l_i$ may have nontrivial multiplicities $\mu_i\in\N$,
$$
 p_0(\eu)=\prod_i (\eu-\l_i)^{\mu_i},\qquad \sum_i \mu_i=\deg p_0=n,
$$
the quotient algebra $\mathfrak P$ is naturally isomorphic to the direct sum of the local algebras $J_i\simeq \C[\eu]/(\eu-\l_i)^{\mu_i+1}$ of dimension $\mu_i$: each element of $\mathfrak P=\bigoplus_i J_i$ can be identified with a \emph{multijet}, a collection of  $\mu_i$-jets (Taylor polynomials of order $\mu_i$) at the points $\l_i\in\L\subset\C$. \end{small}
\end{Rem}

For any polynomial $s\in\C[\eu]$ the multiplication by $s$ is an endomorphism of the algebra $\mathfrak P$. It is invertible (automorphism of $\mathfrak P$) if and only if $\gcd(p_0,s)=1$.

The equations \eqref{homol} induce the equations in the algebra $\mathfrak P$:
\begin{equation}\label{homol-p}
p_0\sh j h_j+q_jh_0=v_j,\qquad j=1,2,\dots
\end{equation}
They can be re-written in the operator form as
\begin{equation}\label{hom-oper}
\boldsymbol P_j h_j+\boldsymbol H q_j=v_j
\end{equation}
where $\boldsymbol P_j,\boldsymbol H$ are endomorphisms (self-maps) of $\mathfrak P$, induced by multiplication,
\begin{equation}
 \boldsymbol P_j: h\longmapsto p_0\sh jh,
\qquad\boldsymbol H:q_j\longmapsto h_0 q_j.
\end{equation}
The endomorphisms commute between themselves and $\boldsymbol H$ is invertible.

\begin{Def}\label{def:nf}
An \emph{affine normal form} for the polynomial $p_0$ is a family of subspaces $V_j\subseteq\mathfrak P$ (not necessarily subalgebras) such that $V_j$ is complementary to the image of $\boldsymbol P_j$,
\begin{equation}\label{nf}
\boldsymbol P_j\mathfrak P+V_j=\mathfrak P\qquad j=1,2,\dots.
\end{equation}
The affine normal form is \emph{minimal}, if $\dim V_j=\dim\Ker\boldsymbol P_j$.

Without loss of generality we may assume that $V_j=0$ for all sufficiently large values of $j$ (for minimal normal forms this condition is automatically satisfied).
\end{Def}

Note that the choice of an affine normal form is by no means unique: moreover, being a normal form is an open property (small perturbation of the subspaces $V_j$ does not violate the property \eqref{nf}.

These definitions are tailored to make the following statement trivial.

\begin{Thm}\label{thm:abstract}
Let $\{V_j\}$ be an abstract affine normal form for the polynomial $p_0\in\C_n[\eu]$.

Then any Fuchsian operator $L=p_0(\eu)+tp_1(\eu)+\cdots$ is $\F$-equivalent to an operator $M=p_0+\sum_{j=1}^Nt^jq_j(\eu)$ with $q_j\in V_j$.
\end{Thm}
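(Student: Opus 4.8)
The plan is to run the same inductive scheme used in Proposition~\ref{prop:poly} and Proposition~\ref{prop:nonres}, but now absorbing the unavoidable obstruction at each order into the prescribed complement $V_j$. Concretely, I would look for Fuchsian operators $H=h_0+\sum_{j\ge1}t^jh_j(\eu)$ and $K=k_0+\sum_{j\ge1}t^jk_j(\eu)$ of order $n-1$, together with the target $M=p_0+\sum_{j\ge1}t^jq_j(\eu)$ with $q_j\in V_j$, solving $MH=KL$. As in \eqref{homol}, matching the coefficient of $t^j$ gives the chain of equations
\begin{equation*}
p_0\sh j h_j - p_0 k_j + q_j h_0 = v_j,\qquad j=1,2,\dots,
\end{equation*}
where $v_j\in\C[\eu]$ has degree $\le 2n-1$ and depends only on the data $p_1,\dots,p_j$ and on the already-constructed $h_i,k_i,q_i$ with $i<j$. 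On the zeroth step one sets $h_0=k_0$ to be any polynomial of degree exactly $n-1$ with $\gcd(h_0,p_0)=1$, which both makes $H,K$ Fuchsian and guarantees $\gcd_0(H,L)=\gcd(h_0,p_0)=1$, the required nondegeneracy for $\F$-equivalence.

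The inductive step is where the normal-form condition \eqref{nf} is used. At stage $j$ the pair $(h_j,k_j)$ enters only through $p_0\sh j h_j - p_0 k_j$, whose reduction modulo $\left<p_0\right>$ is $\boldsymbol P_j h_j=p_0\sh j h_j$ in $\mathfrak P$; as $h_j$ ranges over $\C_{n-1}[\eu]$ this sweeps out exactly $\operatorname{image}\boldsymbol P_j$, while the choice of $k_j$ then adjusts the representative in $\C_{2n-1}[\eu]$ (the Sylvester map $\boldsymbol S_{p_0,p_0\sh j}$ is injective on degree-$(\le n-1)$ pairs and its image is everything divisible by $w_j=\gcd(p_0,p_0\sh j)$, so once the residue in $\mathfrak P$ is fixed the lift is unique). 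Since $\boldsymbol H\colon q\mapsto h_0q$ is an automorphism of $\mathfrak P$ (because $\gcd(h_0,p_0)=1$), the space $\boldsymbol H V_j$ is again complementary to $\operatorname{image}\boldsymbol P_j$ by \eqref{nf}. Hence $\bar v_j\in\mathfrak P$ decomposes uniquely as $\boldsymbol P_j\bar h_j+\boldsymbol H\bar q_j$ with $\bar q_j\in V_j$; picking the representative $q_j\in V_j$ and then $h_j$ realizing $\boldsymbol P_j\bar h_j$ and finally $k_j$ lifting the residual degree-$(\le 2n-1)$ discrepancy into the image of the Sylvester map solves equation number $j$. This determines $(q_j,h_j,k_j)$ and feeds into $v_{j+1}$, so the whole infinite system has a formal solution, and since $V_j=0$ for $j$ large the series $M=p_0+\sum_{j=1}^N t^jq_j$ terminates.

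Two things still need comment. First, the nondegeneracy $\gcd_0(H,L)=1$ was secured at step $0$ and is untouched afterward, and both $H$ and $M$ are Fuchsian of order $n$ (their Euler parts $h_0$ and $p_0$ have the full degree), so the data $(H,K,M)$ genuinely witnesses $\F$-equivalence in the sense of Definition~\ref{def:fuchseq}; that is exactly why the definitions were "tailored to make the statement trivial." Second, if $L$ is convergent rather than merely formal, one wants $H,K$ convergent too — I would not prove this by majorant estimates here but simply invoke the general convergence argument promised for \secref{sec:convergence}, exactly as was done at the end of the proof of Proposition~\ref{prop:nonres}. The main (indeed only) obstacle is purely bookkeeping: keeping straight the three levels of polynomials — residues in $\mathfrak P\simeq\C_{n-1}[\eu]$, Sylvester lifts in $\C_{2n-1}[\eu]$, and the honest operator coefficients — so that the decomposition $\bar v_j=\boldsymbol P_j\bar h_j+\boldsymbol H\bar q_j$ really does pin down $q_j\in V_j$ uniquely while leaving enough freedom in $k_j$ to match degrees; no genuine difficulty arises because $\boldsymbol H$ is invertible and $\{V_j\}$ is by hypothesis a complement to $\operatorname{image}\boldsymbol P_j$ at every order.
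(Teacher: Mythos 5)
Your proof is correct and takes essentially the same route as the paper: derive the order-$j$ homological equations \eqref{homol} from $MH=KL$ with $h_0=k_0$ chosen coprime to $p_0$ of degree $n-1$, pass to the quotient $\mathfrak P$, use the invertibility of $\boldsymbol H$ together with \eqref{nf} to get $\boldsymbol P_j\mathfrak P+\boldsymbol H V_j=\mathfrak P$ and hence a solution $h_j$, $q_j\in V_j$ of \eqref{hom-oper}, and then recover $k_j$ as the quotient by $p_0$. Only note that your parenthetical claims are off in the resonant case --- $\boldsymbol S_{p_0,p_0\sh{j}}$ is \emph{not} injective when $\gcd(p_0,p_0\sh{j})\ne1$ and the decomposition of $\bar v_j$ need not be unique (also $H$ has order $n-1$, not $n$) --- but since only existence of a solution is needed, none of this affects the argument.
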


\begin{proof}
By invertibility of $\boldsymbol H$, we have $\boldsymbol H^{-1}\mathfrak P=\mathfrak P=\boldsymbol H\mathfrak P$. By \eqref{nf}, $\boldsymbol P_j \boldsymbol H^{-1}\mathfrak P+V_j=\mathfrak P$. Applying to both parts of the latter equality the operator $\boldsymbol H$ and using the commutativity, we see that
$$
\boldsymbol P_j\mathfrak P+\boldsymbol HV_j=\mathfrak P,
$$
that is, each homological equation \eqref{hom-oper}, regardless of the right hand side $v_j$, admits a solution $h_j\in\mathfrak P$, $q_j\in V_j$. This solution generates (by definition of $\mathfrak P$) a solution $(h_j,k_j)$ of \eqref{homol}.
\end{proof}

\subsection{Minimal affine normal form}
One possibility to chose an affine normal form is to stick to the polynomials of minimal degree modulo $p_0$. Denote by $w_j\in\C[\eu]$ the greatest common divisor $w_j=\gcd(p_0,p_0\sh j)$; this is a polynomial of degree $\nu_j\le n-1$.

\begin{Prop}\label{prop:min-nf}
Any Fuchsian operator $L=p_0(\eu)+tp_1(\eu)+\cdots$ is $\F$-equivalent to a polynomial operator of the form $M=p_0+\sum_j t^j q_j(\eu)$ with $\deg q_j\le \nu_j-1$. In particular, $q_j=0$ for all nonresonant orders.
\end{Prop}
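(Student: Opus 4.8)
The plan is to realize this as an instance of the abstract normal form theorem, Theorem~\ref{thm:abstract}, by exhibiting a concrete affine normal form $\{V_j\}$ with $V_j$ consisting of the polynomials of degree $\le\nu_j-1$, where $\nu_j=\deg w_j$ and $w_j=\gcd(p_0,p_0\sh j)$. Recall that the image of the Sylvester-type map $\boldsymbol P_j\colon h\mapsto p_0\sh j h$ on $\mathfrak P=\C[\eu]/\<p_0\>$ consists exactly of the residues divisible by $w_j$; equivalently, writing $p_0=w_j g_j$ with $\deg g_j=n-\nu_j$, the image of $\boldsymbol P_j$ is the ideal $(g_j)/\<p_0\>$, of dimension $n-\nu_j$. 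Its codimension in $\mathfrak P$ is therefore $\nu_j=\dim\Ker\boldsymbol P_j$, so a minimal complement has dimension $\nu_j$.

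First I would set $V_j=\C_{\nu_j-1}[\eu]$, the polynomials of degree at most $\nu_j-1$, viewed inside $\mathfrak P$ (this is legitimate since $\nu_j\le n-1$ and these polynomials are distinct modulo $p_0$). The key claim is that $\boldsymbol P_j\mathfrak P\oplus V_j=\mathfrak P$. For this it suffices to check that $\boldsymbol P_j\mathfrak P\cap V_j=0$, since $\dim\boldsymbol P_j\mathfrak P+\dim V_j=(n-\nu_j)+\nu_j=n=\dim\mathfrak P$. So suppose a polynomial $r$ of degree $\le\nu_j-1$ is congruent modulo $p_0$ to some $p_0\sh j h$; then $r=p_0\sh j h+p_0 c$ in $\C[\eu]$, hence $r$ is divisible by $w_j=\gcd(p_0,p_0\sh j)$; but $\deg r\le\nu_j-1<\nu_j=\deg w_j$ forces $r=0$. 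This gives a minimal affine normal form in the sense of Definition~\ref{def:nf}.

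Then I would simply invoke Theorem~\ref{thm:abstract} with this choice of $\{V_j\}$: every Fuchsian operator $L=p_0(\eu)+tp_1(\eu)+\cdots$ is $\F$-equivalent to an operator $M=p_0+\sum_j t^j q_j(\eu)$ with $q_j\in V_j$, i.e.\ $\deg q_j\le\nu_j-1$. For a nonresonant order $j$ (no two roots of $p_0$ differing by $j$) we have $\gcd(p_0,p_0\sh j)=1$, so $\nu_j=0$, $V_j=0$, and hence $q_j=0$; in particular the sum is finite, extending only up to $j=N$, the maximal resonance order, exactly as in the statement. That $M$ is polynomial (an element of $\C[t,\eu]$) then follows from finiteness of the sum together with $\deg_\eu q_j\le\nu_j-1\le n-1<n=\deg_\eu p_0$. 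Convergence is not an issue here because the normal form is a polynomial; the only remaining point, that the $\^\F$-conjugacy can be taken convergent when $L$ converges, is deferred to the convergence discussion in \secref{sec:convergence} and is not needed for the formal statement.

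The main obstacle, such as it is, is entirely in the bookkeeping of the previous step: one must be sure that the endomorphism $\boldsymbol P_j$ really has image equal to the ideal generated by $g_j=p_0/w_j$, which is the standard description of the image of multiplication by $p_0\sh j$ on a quotient by $p_0$, and that the dimension count $n-\nu_j$ is correct. Once this is in hand, the proposition is a one-line corollary of Theorem~\ref{thm:abstract}; there is no genuinely hard analytic or algebraic content beyond the Sylvester/ideal computation, which is elementary commutative algebra in the principal ideal domain $\C[\eu]$.
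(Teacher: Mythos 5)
Your proof is correct and takes essentially the same route as the paper: both choose $V_j=\C_{\nu_j-1}[\eu]$ and verify it is complementary in $\mathfrak P$ to the image of $\boldsymbol P_j$, which consists of the residues of degree $\le n-1$ divisible by $w_j$ (the paper cites division with remainder by $w_j$, you use the equivalent dimension count plus the observation that a nonzero polynomial of degree $<\nu_j$ cannot be a multiple of $w_j$), and then both invoke Theorem~\ref{thm:abstract}. One inessential slip: the image of $\boldsymbol P_j$ is the ideal generated by $w_j$ modulo $p_0$ (of dimension $n-\nu_j$), whereas $(g_j)$ modulo $p_0$ with $g_j=p_0/w_j$ is its \emph{kernel} (of dimension $\nu_j$); your actual argument uses only the correct description, so nothing breaks.
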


\begin{proof}
It suffices to note that the subspace $V_j\simeq\C_{\nu_j-1}[\eu]\subseteq\C_{n-1}[\eu]\simeq\mathfrak P$ is naturally complementary to the image $\boldsymbol P_j\mathfrak P$ which consists of all polynomials of degree $\le n-1$, divisible by $w_j$. This follows from the division with remainder by $w_j$ in $\mathfrak P\simeq\C_{n-1}[\eu]$.
\end{proof}

Note that the family of the subspaces $V_j\simeq\C_{\nu_j-1}[\eu]$ is a minimal normal form.

\begin{Ex}\label{ex:single}
Assume that the operator $L$ has a single resonance, i.e., only one pair of roots of $p_0$ differs by an integer $k$. Then the operator $L$ is $\F$-equivalent to $p_0(\eu)+ct^k$, $c\in\C$.
\end{Ex}

\subsection{Separation of resonances}
A different strategy of choice of the subspaces $\{V_j\}$ constituting a normal form, is to reproduce the strategy which results in the Poincar\'e--Dulac normal form for Fuchsian systems with diagonal residue matrix $A\in\Mat(n,\C)$. Recall that in this case instead of solving the homological equation \eqref{homol}, one has to solve matrix equations of the form $[A, H]+jH=B_j$, where $B_j$ are given matrices from $\Mat(n,\C)$, cf.~with \cite{thebook}*{Theorem 16.15}. The operator taking a matrix $H$ into the twisted commutator as above, is diagonal in the natural basis of matrices having only one nonzero entry, and kernel of this operator is naturally complementary to its image.

An analogous construction can be applied in the case of operators $\boldsymbol P_j$ if the polynomial $p_0=\E(L)$ has simple roots. Then multiplication by any polynomial, including $w_j$, is diagonal, hence one can choose $V_j=\operatorname{Ker}\boldsymbol P_j$. The polynomials $q_j$ which appear in the corresponding normal form, will be vanishing at all roots of $p_0/w_j$, hence divisible by the latter polynomial (recall that we consider polynomials of degree $\le \deg p_0-1$). In particular, if a certain root $\l_i$ of $p_0$ does not appear in any resonance, then all polynomials $q_j$ in the normal form will be divisible by $\eu-\l_i$, and therefore the operator $M$ in the normal form established in Theorem~\ref{thm:abstract} will be divisible (from the right) by the first order Euler operators $\eu-\l_i\in\F$.

This claim gives a partial effective factorization of the normal form \eqref{fnf}, which allows to identify factors with $r_i=0$. In the next section we explain how one can give an accurate description of the factors in \eqref{fnf} in general.

\subsection{Completely reducible minimal normal form}
Occurrence of resonances between the roots $\L=\{\l_1,\dots,\l_n\}\subset \C$ of the polynomial $p_0\in\C_n[\eu]$ allows to introduce certain combinatorial structures. First, the (natural linear) order on $\Z$ induces a partial order on the roots: $\l_i\ge \l_k \iff\l_i-\l_k\in\Z$.

\begin{Rem}\label{rem:no}
If the Euler part has multiple roots, then the set $\L$ contains repetitions. To simplify the subsequent arguments, it is convenient to extend the partial order to a full order as follows. The roots of $p_0$ are subdivided in \emph{resonant groups} in such a way that inside each group all roots have integer differences (and hence are comparable in the sense of the partial order). Different resonant groups can be arranged between themselves in any way. The corresponding order is conveniently represented by the enumeration of the set of all roots $\L=\{\l_i\}$ in the non-decreasing order. This will make $\L$ into an ordered set naturally isomorphic to $\{1,2,\dots,n\}$: multiple roots of $p_0$ occupy consecutive positions in this list. We call this order a \emph{natural order} on $\L$ (it is not unique, since different resonant groups can be transposed, but is convenient for the formulations).
\end{Rem}

Second, for each order we can list all roots which produce this resonances of this order. Given a natural index $j\in\N$, we define
\begin{equation}\label{updown}
 \L_j=\{\l\in\L: \l+j\in\L\}\subset\L,\qquad j=1,2,\dots.
\end{equation}

This definition, unambiguous in the case where $p_0$ has only simple roots, should be modified as follows: if $\mu+j=\varkappa$ are two roots in resonance and the multiplicities of $\mu,\varkappa$ in $\L$ (the list which now may have repetitions) are $m,k$ respectively, then $\mu$ enters $\L_j$ with the multiplicity equal to $\min(m,k)$, that is, with its multiplicity as the root of the polynomial $w_j=\gcd (p_0,p_0\sh {j})$.

Together with the sets $\L_j\subset\L$ it is convenient to consider also their fully ordered counterparts (cf.~with Remark~\ref{rem:no}), the sets of the corresponding indices $I_j\subset\{1,2,\dots,n\}$. In the case where $\l$ a root of  $p_0$ with multiplicity $m>1$ and of $p_0\sh j$ with multiplicity $k<m$, we include in $I_j$ \emph{the last $k$ instances} where $\l$ enters $\L$ (out of the total $m$).

The dual description can be given by the sets $J(\l)$, which for any root $\l\in\L$ consists of the natural numbers $j\in\N$ such that $\l+j\in\L$. The case of multiple roots needs no special treatment.

Recall that support (or the Newton diagram) of a polynomial $r=\sum c_k t^k\in\C[t]$ is the set of indices $k\in\N$ such that the corresponding coefficient $c_k$ is nonzero: $\supp r=\{k:c_k\ne 0\}\subset\N$.

\begin{Thm}\label{thm:min-fnf}
Any Fuchsian operator is $\F$-equivalent to a completely reducible operator of the form
\begin{equation}\label{min-fnf}
\begin{gathered}
 L=(\eu-\l_1+r_1(t))\cdots(\eu-\l_n+r_n(t)),\\ r_i\in\C[t],\quad \supp r_i\subseteq J(\l_i),\qquad i=1,\dots,n.
\end{gathered}
\end{equation}
In particular, $\deg r_i\le \max\{j\in\N:\ \l_i+j\in\L\}$.
\end{Thm}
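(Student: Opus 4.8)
The plan is to build the completely reducible normal form iteratively, peeling off first-order factors from the right, one root at a time in a convenient order, while keeping track of which Taylor orders $j$ can still be made to vanish. The starting point is Lemma~\ref{lem:factor}, which already gives an \emph{analytic} factorization $L=(\eu-\l_1+R_1)\cdots(\eu-\l_n+R_n)$; the content of the theorem is to improve the analytic tails $R_i$ to \emph{polynomials} whose support sits inside $J(\l_i)$. Rather than truncating naively as in Proposition~\ref{prop:fnf}, the idea is to combine the abstract normal form machinery of Theorem~\ref{thm:abstract} with the separation-of-resonances strategy (the choice $V_j=\Ker\boldsymbol P_j$ in the simple-root case) and then read off the factorization.

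First I would treat the case where $p_0=\E(L)$ has \emph{simple} roots. Here multiplication by $w_j=\gcd(p_0,p_0\sh j)$ is diagonal in the idempotent basis $\{\f_i\}$ of $\mathfrak P\simeq\{\f:\L\to\C\}$, so $\Ker\boldsymbol P_j$ is spanned by those $\f_i$ with $\l_i\in\L_j$ (equivalently $\l_i+j\in\L$), and it is complementary to $\boldsymbol P_j\mathfrak P$. Applying Theorem~\ref{thm:abstract} with this minimal normal form, $L$ is $\F$-equivalent to $M=p_0+\sum_j t^jq_j$ with $q_j\in\operatorname{span}\{\f_i:\l_i\in\L_j\}$. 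Next I would convert this ``sparse in $\mathfrak P$'' statement into a factorization: I claim that such an $M$ factors as in \eqref{min-fnf}. The cleanest route is to run Lemma~\ref{lem:factor} on $M$ itself but in the \emph{right} order of roots — namely, process the roots of each resonant group from the \emph{largest} downward (the natural order of Remark~\ref{rem:no}) — and check inductively that at each stage the first-order factor extracted, $\eu-\l_i+R_i$, actually has $R_i$ polynomial with $\supp R_i\subseteq J(\l_i)$. The inductive claim to maintain is: after removing the top $\l_n,\l_{n-1},\dots,\l_{i+1}$, the remaining Fuchsian quotient operator has its Taylor coefficients $q_j$ (for $j\ge 1$) supported, in $\mathfrak P$-coordinates restricted to the surviving roots, only on roots $\l$ with $\l+j$ still present; extracting $\eu-\l_i+R_i$ then forces $R_i(t)=\sum_{j\in J(\l_i)}(\text{coeff})\,t^j$ because $R_i=\frac{\eu v_i}{v_i}$ for an eigenfunction $u_i=t^{\l_i}v_i$, and the Taylor coefficients of $v_i$ that can be nonzero are exactly those at orders $j$ for which the homological obstruction was nonempty, i.e.\ $\l_i+j\in\L$.

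For the general case of \emph{multiple} roots I would use the decomposition $\mathfrak P=\bigoplus_i J_i$ into local algebras and the multiplicity-aware definition of $\L_j$, $I_j$, $J(\l)$ given just before the theorem (where a root $\mu$ of multiplicity $m$ resonating with $\varkappa=\mu+j$ of multiplicity $k$ enters with multiplicity $\min(m,k)$, i.e.\ its multiplicity in $w_j$). The kernel $\Ker\boldsymbol P_j$ is still complementary to $\boldsymbol P_j\mathfrak P$ — this is the division-with-remainder-by-$w_j$ argument of Proposition~\ref{prop:min-nf} — so Theorem~\ref{thm:abstract} again yields the required sparsity of the $q_j$, and the ``last $k$ instances'' convention in the definition of $I_j$ is exactly what makes the downward-from-the-top extraction in Lemma~\ref{lem:factor} respect the support constraint factor by factor: the multiple copies of a given value $\mu$ get peeled off consecutively, and only the appropriate number of them inherit a nontrivial polynomial tail. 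The bound $\deg r_i\le\max\{j:\l_i+j\in\L\}$ is then immediate from $\supp r_i\subseteq J(\l_i)$. Finally, convergence is not an issue since the normal form is polynomial and, as promised in \secref{sec:convergence} (the convergence argument referenced after Proposition~\ref{prop:nonres}), the $\^\F$-conjugacy between convergent operators converges; alternatively one simply invokes Proposition~\ref{prop:fnf} together with the effective-support refinement just proved.

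\textbf{Main obstacle.} The delicate point is the bookkeeping for multiple roots: showing that the ``sparse $q_j$'' output of the abstract normal form really does factor into first-order pieces with the claimed supports, rather than merely into pieces with polynomial (but possibly fuller) tails. Getting the order of extraction right and proving that the inductive hypothesis on the support of the remaining operator's Taylor coefficients is preserved under right division by $\eu-\l_i+R_i$ — including correctly accounting for how a resonance $\mu+j=\varkappa$ with $k<m$ ``uses up'' only $\min(m,k)$ of the copies of $\mu$ — is where the real work lies; the simple-root case is essentially automatic once Theorem~\ref{thm:abstract} is in hand.
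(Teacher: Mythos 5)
There is a genuine gap, and it sits exactly where you flag it: the conversion of the ``sparse additive'' normal form into a factorization with the stated supports is not a bookkeeping issue but the actual content of the theorem, and the justification you sketch does not hold. First, the heuristic that the eigenfunction factor $v_i$ of the sparse operator $M=p_0+\sum_j t^jq_j$ has Taylor coefficients only at resonant orders, and that therefore $R_i=\eu v_i/v_i$ has $\supp R_i\subseteq J(\l_i)$, is false: even if $v_i$ were a sparse polynomial, its logarithmic derivative is in general an infinite series (e.g.\ $v=1+ct^k$ gives $\eu v/v=kct^k-kc^2t^{2k}+\cdots$), so right division by $\eu-\l_i+R_i$ à la Lemma~\ref{lem:factor} gives analytic, not polynomial, tails with no support control. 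Second, your reduction in the multiple-root case collapses at the first step: $\Ker\boldsymbol P_j$ is \emph{not} complementary to $\boldsymbol P_j\mathfrak P$ when a root $\l$ of multiplicity $m$ resonates with a root of multiplicity $k$, $1\le k\le m-1$ (on the local summand $\C[\eu]/(\eu-\l)^m$ multiplication by $p_0\sh j$ acts like multiplication by $(\eu-\l)^k$, whose kernel and image intersect nontrivially); the division-by-$w_j$ argument of Proposition~\ref{prop:min-nf} justifies the complement $\C_{\nu_j-1}[\eu]$, not the kernel, and the paper restricts the kernel choice to simple roots for precisely this reason. Indeed the paper itself notes that the separation-of-resonances normal form yields only a \emph{partial} factorization statement (identifying the factors with $r_i=0$), which is why a different argument is needed.

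The paper closes this gap by choosing the complements adapted to the product structure rather than to the kernel: it expands the perturbed products $E_{ij}=(\eu-\l_1)\cdots(\eu-\l_i+t^j)\cdots(\eu-\l_n)=p_0+t^jp_{ij}(\eu)$, proves by a pole-order argument that the $p_{ij}$ with $i\in I_j$ are linearly independent modulo $w_j$, and concludes (Corollary~\ref{cor:transverse}) that $V_j=\operatorname{span}\{p_{ij}:i\in I_j\}$ is a minimal affine normal form. The induction on $j$ then never leaves the factored world: at each step the monomials $c_it^j$ are added directly to the factors $r_{i,j-1}$, which changes the order-$j$ coefficient by $\sum c_ip_{ij}$ and allows the homological equation \eqref{homol} to be solved with $h_0=1$, so the $j$-jet of the operator is in the form \eqref{min-fnf} by construction at every stage; Proposition~\ref{prop:poly} kills the tail beyond the maximal resonance order $N$. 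If you want to salvage your two-step plan, you would have to prove the missing implication (sparse $q_j$ in a suitable complement $\Rightarrow$ factorization with $\supp r_i\subseteq J(\l_i)$), and the most natural way to do that is essentially to reprove the paper's induction with the $p_{ij}$'s.
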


The rest of this section contains the proof of this theorem.

\subsection{Expansion of noncommutative products}
From that moment we assume that the roots $\l_i$ are labeled in a natural order, see Remark~\ref{rem:no}.

Consider the operators $E_{ij}\in\F$ of the form \eqref{min-fnf} in the case where only one polynomial $r_i$ is different from zero and is itself a monomial of degree $j$:
$$
 E_{ij}=(\eu-\l_1)\cdots(\eu-\l_{i-1})(\eu-\l_i+t^j)(\eu-\l_{i+1})\cdots(\eu-\l_n),\qquad i=1,\dots,n.
$$
After complete expansion of $E_{ij}$ we obtain
\begin{equation}\label{pij}
\begin{gathered}
 E_{ij}=p_0(\eu)+t^j p_{ij}(\eu),\qquad p_{ij}\in\C[\eu],\ i=1,\dots,n,
  \\
 p_{ij}=(\eu-\l_1+j)\cdots(\eu-\l_{i-1}+j)(\eu-\l_{i+1})\cdots(\eu-\l_n).
 \end{gathered}
\end{equation}
In other words, $p_{ij}$ is obtained by shifting the argument by $j$ in the first $i-1$ terms of the ordered factorization of $p_0$, while keeping the last terms the same as in $p_0$. Accordingly, the roots of $p_{ij}$ are obtained by shifting the first $i-1$ roots of $\L$ \emph{to the left} by $j$ units, removing the $i$th root from the list and keeping the remaining (larger) roots in place. Speaking informally, $p_{ij}$ has a gap on $i$th place in the (partially) ordered set $\L$.

\begin{Lem}\label{lem:indep}
For each $j\ge 1$ the polynomials $p_{1j},\dots,p_{nj}\in\C_{n-1}[\eu]$ are linear independent over $\C$.
\end{Lem}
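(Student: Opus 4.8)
The plan is to exhibit, for each fixed $j\ge 1$, a family of $n$ evaluation functionals on $\C_{n-1}[\eu]$ against which the matrix of values $\bigl(p_{ij}(\text{functional})\bigr)$ is triangular with nonzero diagonal. The natural candidates are point evaluations at the roots of $p_0$. Recall from \eqref{pij} that $p_{ij}$ is the product over $k<i$ of $(\eu-\l_k+j)$ times the product over $k>i$ of $(\eu-\l_k)$; equivalently its root set is $\{\l_1-j,\dots,\l_{i-1}-j\}\cup\{\l_{i+1},\dots,\l_n\}$, an ``ordered set with a gap at position $i$''. First I would observe that $p_{ij}(\l_i)\ne 0$: none of the factors $(\l_i-\l_k+j)$ for $k<i$ vanishes (that would force $\l_k=\l_i+j$, i.e.\ $\l_k>\l_i$, contradicting $k<i$ in the natural order of Remark~\ref{rem:no}), and none of the factors $(\l_i-\l_k)$ for $k>i$ vanishes either, \emph{provided} $\l_i$ is a simple root. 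When $\l_i$ is a multiple root this evaluation argument degenerates and must be replaced by a jet (derivative) evaluation at $\l_i$ of the appropriate order — this is the one point where multiplicities require care, and I single it out below.

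For the simple-root case the argument is then immediate: suppose $\sum_{i=1}^n c_i p_{ij}=0$. Evaluate at $\l_n$. For every $i<n$ the polynomial $p_{ij}$ carries the factor $(\eu-\l_n)$ (since $n>i$), so $p_{ij}(\l_n)=0$, and only the term $c_n p_{nj}(\l_n)$ survives; since $p_{nj}(\l_n)\ne 0$ we get $c_n=0$. Now evaluate at $\l_{n-1}$: every $p_{ij}$ with $i<n-1$ carries the factor $(\eu-\l_{n-1})$, so those terms vanish, the term $i=n$ is already gone, and we conclude $c_{n-1}=0$. Descending through $\l_{n-2},\dots,\l_1$ in this way kills all coefficients. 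Thus the $p_{ij}$ are linearly independent. (Note this really is the dual/triangular picture: the $(n\times n)$ matrix $\|p_{ij}(\l_k)\|_{i,k}$ is lower-triangular in the ordering $k=n,n-1,\dots,1$, with nonzero diagonal entries $p_{kj}(\l_k)$.)

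The main obstacle is handling multiple roots, i.e.\ a resonant group in which several of the $\l_i$ coincide and occupy consecutive positions $i_0,i_0+1,\dots,i_0+m-1$ (say all equal to a common value $\mu$, with multiplicity $m$). Here point evaluation at $\mu$ cannot separate the $m$ corresponding columns $p_{i_0 j},\dots,p_{i_0+m-1,j}$, because each of $p_{i,j}$ for $i$ in this block still has some number of $(\eu-\mu)$ factors surviving. I would instead evaluate, at the point $\mu$, the functionals ``$s$-th derivative at $\mu$'' for $s=0,1,\dots,m-1$, ordered appropriately: the polynomial $p_{i,j}$ for $i=i_0+\ell$ vanishes to order exactly $m-1-\ell$ at $\mu$ (it has lost one $(\eu-\mu)$ factor at position $i$ and kept the other $m-1$ of them, minus none — one checks the count from \eqref{pij}), so against the functionals ordered by \emph{decreasing} vanishing order the block again contributes a triangular, invertible sub-block; the off-block columns either vanish at $\mu$ to order $\ge m$ (those with smaller index, which have all $m$ factors $(\eu-\mu)$) or are handled by the other resonant groups. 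Interleaving these per-group triangular blocks — different resonant groups correspond to disjoint evaluation points — yields a global triangular system and hence $c_i=0$ for all $i$. I expect the bookkeeping of vanishing orders in the multiple-root block to be the only genuinely delicate point; everything else is the elementary triangularity argument above.
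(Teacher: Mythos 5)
Your proof is correct and is essentially the paper's own argument in dual form: the paper divides by $p_0$ and tracks the pole of $p_{ij}/p_0$ at $\l_i$ (new pole, or pole of increased order at a multiple root), while you track the complementary quantity, the vanishing order of $p_{ij}$ at $\l_i$, via point and jet evaluations; both rest on the same triangularity over the naturally ordered roots and on the fact that no factor $(\eu-\l_k+j)$, $k<i$, can vanish at $\l_i$. One small touch-up: the separating functional for a larger-index column may come from a larger root of the \emph{same} resonant group rather than a different group, but the descending-index triangular scheme you describe handles this automatically.
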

\begin{proof}
A vanishing $\C$-linear combination of polynomials $p_{1j},\dots,p_{nj}$ after division by $p_0$ would result in a vanishing $\C$-linear combination between the corresponding rational functions. However, this is impossible, since the first fraction will have a pole at $\l_1$, and in a similar way  $\frac{p_{ij}}{p_0}$  will have either a pole at $\l_i$, or (if $\l_i=\l_{i-1}$ was a multiple root of $p_0$) the order of the pole will increase compared with the previous fraction. Since the roots were ordered, this new poles appear at the points where all previous ratios were holomorphic, which means that no linear combination can arise in the process.
\end{proof}

\begin{Cor}\label{cor:epi}
For any $j$ the polynomials $p_{1j},\dots,p_{nj}$ span $\C_{n-1}[\eu]$.
\end{Cor}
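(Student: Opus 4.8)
The plan is to deduce the corollary from Lemma~\ref{lem:indep} by a bare dimension count, so essentially no new work is required. First I would record that, by the explicit expansion \eqref{pij}, each $p_{ij}$ is a product of exactly $n-1$ monic linear factors in $\eu$, hence a monic polynomial of degree precisely $n-1$; in particular $p_{1j},\dots,p_{nj}\in\C_{n-1}[\eu]$. Next I would note that the ambient space of polynomials of degree $\le n-1$ has $\dim_\C\C_{n-1}[\eu]=n$, which matches the number of polynomials in the family (one for each $i=1,\dots,n$), and also equals $\deg p_0=n$.

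Then I would invoke Lemma~\ref{lem:indep}, which says that for each fixed $j\ge1$ the $n$ polynomials $p_{1j},\dots,p_{nj}$ are linearly independent over $\C$. A family of $n$ linearly independent vectors in an $n$-dimensional $\C$-vector space is automatically a basis, so in particular $\operatorname{span}_\C\{p_{1j},\dots,p_{nj}\}=\C_{n-1}[\eu]$, which is exactly the assertion. There is no real obstacle here; the only points that deserve a second glance are the bookkeeping of degrees (that each $p_{ij}$ has degree exactly $n-1$ and does not accidentally drop lower, which is immediate from \eqref{pij} as a product of $n-1$ monic factors) and the matching of the two counts $n=\deg p_0=\dim_\C\C_{n-1}[\eu]$, both trivial. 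I would present this as a one-line proof: linear independence plus the correct dimension forces spanning.
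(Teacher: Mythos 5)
Your proof is correct and coincides with the paper's own argument: Lemma~\ref{lem:indep} gives $n$ linearly independent elements of the $n$-dimensional space $\C_{n-1}[\eu]$, which must therefore span it. The extra remark that each $p_{ij}$ is monic of degree exactly $n-1$ is harmless bookkeeping confirming membership in $\C_{n-1}[\eu]$.
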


\begin{proof}
Since these polynomials are linear independent, they span an $n$-dimensional $\C$-subspace in $\C_{n-1}[\eu]$ which for the reasons of dimension must coincide with $\C_{n-1}[\eu]$.
\end{proof}

A minor modification of this argument proves a similar statement.

\begin{Lem}
Let $j\in\N$ be a natural number and $w_j=\gcd(p_0,p_0\sh j)$. Then the polynomials $p_{ij}$ for $i\in I_j$ are linear independent modulo $w_j$.
\end{Lem}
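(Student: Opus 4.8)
The plan is to recast the asserted linear independence modulo $w_j$ as the invertibility of a $\nu_j\times\nu_j$ matrix, $\nu_j=\deg w_j$, and then to display that matrix in block-triangular form, in the same spirit as the proof of Lemma~\ref{lem:indep}.

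\emph{Reduction.} Since the multiplicity of a point $\mu$ as a root of $p_0\sh j$ equals that of $\mu+j$ as a root of $p_0$, the root $\mu$ of $w_j=\gcd(p_0,p_0\sh j)$ has multiplicity $m_\mu=\min\bigl(\operatorname{mult}_\mu p_0,\operatorname{mult}_{\mu+j}p_0\bigr)$; comparing with the construction of $I_j$, which contributes for each such $\mu$ exactly the last $m_\mu$ of the $\operatorname{mult}_\mu p_0$ occurrences of $\mu$ in $\L$, one gets $|I_j|=\sum_\mu m_\mu=\deg w_j=\nu_j$. It therefore suffices to show that the residues $p_{ij}\bmod w_j$, $i\in I_j$, form a basis of the $\nu_j$-dimensional algebra $\C[\eu]/\langle w_j\rangle$. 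By the Chinese remainder theorem this algebra decomposes as $\bigoplus_\mu\C[\eu]/\langle(\eu-\mu)^{m_\mu}\rangle$ over the roots $\mu$ of $w_j$; taking as coordinates on each summand the Taylor coefficients of orders $0,1,\dots,m_\mu-1$ at $\mu$, I must prove that the $\nu_j\times\nu_j$ ``jet matrix'' $V$ with rows indexed by these coordinates and columns by $i\in I_j$ is nonsingular.

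\emph{Local computation.} From the product formula \eqref{pij} one reads off that the order of vanishing of $p_{ij}$ at a root $\mu$ of $w_j$ equals $\#\{k<i:\l_k=\mu+j\}+\#\{k>i:\l_k=\mu\}$, and that whenever this order is $v$ the coefficient of $(\eu-\mu)^v$ in $p_{ij}$ is the product of the factors $\mu+j-\l_k$ over $k<i$ with $\l_k\ne\mu+j$ and of the factors $\mu-\l_k$ over $k>i$ with $\l_k\ne\mu$, hence nonzero. Fix the natural order on $\L$ (equal values occupy consecutive positions; within a resonant group larger values carry larger indices) and any linear order on the roots of $w_j$ that, inside each group, is the increasing one. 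Because $\mu+j>\mu$, all occurrences of $\mu+j$ come after all occurrences of $\mu$. Now: \emph{(a)} the entry of $V$ in block-row $\mu$ and column $i$ vanishes unless $\l_i$ lies in the same resonant group as $\mu$ with $\mu\le\l_i$ — indeed, if $\l_i$ precedes $\mu$ in $\L$ then all $\operatorname{mult}_\mu p_0\ (\ge m_\mu)$ occurrences of $\mu$ sit past position $i$, while if $\l_i$ lies in a group following that of $\mu$ then all $\operatorname{mult}_{\mu+j}p_0\ (\ge m_\mu)$ occurrences of $\mu+j$ sit before $i$; either way $p_{ij}\equiv0\pmod{(\eu-\mu)^{m_\mu}}$. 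Thus $V$ is \textbf{block upper-triangular}. \emph{(b)} In the diagonal block (columns $i\in I_j$ with $\l_i=\mu$, i.e., the last $m_\mu$ occurrences of $\mu$) no occurrence of $\mu+j$ precedes $i$, so the order of vanishing of $p_{ij}$ at $\mu$ is $\#\{k>i:\l_k=\mu\}$, which runs bijectively over $\{0,1,\dots,m_\mu-1\}$ as $i$ ranges over those occurrences; with the nonvanishing of the leading coefficients noted above, ordering the columns of the block by this order of vanishing makes it triangular with nonzero diagonal. Hence $\det V=\prod_\mu(\text{nonzero})\ne0$.

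The delicate step — the main obstacle — is the multiplicity book-keeping: it is crucial that $I_j$ selects the \emph{last} $m_\mu$ occurrences of $\mu$, since the first ones would produce orders of vanishing $\ge\operatorname{mult}_\mu p_0-m_\mu$, possibly already $\ge m_\mu$, and the diagonal block would degenerate; and the block-triangular pattern must be tracked carefully across distinct resonant groups and repeated roots. When $p_0$ (hence $w_j$) has only simple roots everything collapses: $V=(p_{ij}(\l_a))_{i,a\in I_j}$, which is literally lower-triangular with nonzero diagonal, because $p_{ij}(\l_a)$ carries the factor $\l_a-\l_a$ whenever $a>i$ while $p_{ij}(\l_i)=\prod_{k<i}(\l_i+j-\l_k)\prod_{k>i}(\l_i-\l_k)\ne0$.
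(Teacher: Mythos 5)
Your argument is correct and is essentially the paper's own proof in different clothing: where the paper tracks, along the natural order of $\L$, the pole orders of the fractions $p_{ij}/w_j$ at the roots of $w_j$ (each new $i\in I_j$ contributing a new pole or a pole of higher order), you track the equivalent data --- the vanishing orders and nonzero leading Taylor coefficients of $p_{ij}$ at those roots --- and package it as block-triangularity of the jet matrix over $\C[\eu]/\langle w_j\rangle$. Your only additions are the explicit count $|I_j|=\deg w_j$ and the spelled-out multiplicity bookkeeping (in particular why $I_j$ must pick the \emph{last} occurrences of each root), which the paper's terse proof leaves implicit.
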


Note that the polynomials $p_{ij}$ and $p_{i+1,j}$ in general are different even if $\l_i=\l_{i+1}$.

\begin{proof}
Arguing as before, consider the rational fractions $\frac{p_{ij}}{w_j}$. Since the roots of $w_j$ constitute only a proper subset of $\L$, then not all of these fractions have either a new pole at $\l_i$, or a pole of larger order. On the other hand, if $\l_i\in\L_j$, then this means that one or more (depending on multiplicity) of the larger roots when shifted by $j$ will coincide with $\l_i$ and hence create a pole of $\frac1{w_j}$ of the corresponding order. In the case where $\l_i$ is a multiple root, we have to consider the fractions $\frac{p_{ij}}{w_j}$ for $i\in I_j$

This means that in the ordered subsequence $\frac{p_{ij}}{w_j}$, $i\in I_j$, the behavior of the poles will be as before (either a new pole appears or the order of the previous pole is increased). In both cases the linear dependence is impossible.
\end{proof}

\begin{Cor}\label{cor:transverse}
The linear span $V_j$ of polynomials $\{p_{ij}:i\in I_j\}\subset\C_{n-1}[\eu]\simeq\C[\eu]\bmod p_0$ is a linear subspace transversal to the image of the operator $\boldsymbol P_j$ from \eqref{nf}, and hence these subspaces form a minimal abstract normal form in the sense of Definition~\ref{def:nf}.
\end{Cor}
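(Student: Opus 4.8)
The plan is to derive the corollary from the preceding Lemma by describing the image of $\boldsymbol P_j$ explicitly and then counting dimensions. First I would recall, exactly as in the proof of Proposition~\ref{prop:min-nf}, that in the quotient algebra $\mathfrak P=\C[\eu]/\langle p_0\rangle$ the image $\boldsymbol P_j\mathfrak P$ of multiplication by $p_0\sh j$ is the principal ideal generated by $w_j=\gcd(p_0,p_0\sh j)$, i.e.\ consists precisely of the polynomials of degree $\le n-1$ divisible by $w_j$. Since $w_j\mid p_0$, the third isomorphism theorem identifies the cokernel $\mathfrak P/\boldsymbol P_j\mathfrak P$ canonically with $\C[\eu]/\langle w_j\rangle$, a $\C$-space of dimension $\nu_j=\deg w_j$; and as $\boldsymbol P_j$ is an endomorphism of the finite-dimensional space $\mathfrak P$ we also get $\dim\Ker\boldsymbol P_j=\operatorname{codim}\boldsymbol P_j\mathfrak P=\nu_j$.

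Next I would note that by construction $|I_j|=\nu_j$: the multiplicity with which a root $\l\in\L$ enters the set $\L_j$ of \eqref{updown} (equivalently, the set $I_j$) is by definition its multiplicity as a root of $w_j$, so summing over all roots gives $|I_j|=\deg w_j$. The preceding Lemma then says exactly that the $\nu_j$ polynomials $\{p_{ij}:i\in I_j\}$ of \eqref{pij} are linearly independent modulo $w_j$; hence their classes form a basis of $\C[\eu]/\langle w_j\rangle\cong\mathfrak P/\boldsymbol P_j\mathfrak P$, so the composite $V_j\hookrightarrow\mathfrak P\twoheadrightarrow\mathfrak P/\boldsymbol P_j\mathfrak P$ is surjective. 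This is precisely the transversality requirement $\boldsymbol P_j\mathfrak P+V_j=\mathfrak P$ of \eqref{nf}.

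To finish, I would close with the dimension bookkeeping: $\dim V_j\le|I_j|=\nu_j=\dim\Ker\boldsymbol P_j=\operatorname{codim}\boldsymbol P_j\mathfrak P$, which together with $\boldsymbol P_j\mathfrak P+V_j=\mathfrak P$ forces $\dim V_j=\nu_j$ and makes the sum direct. Hence $\{V_j\}$ is an affine normal form in the sense of Definition~\ref{def:nf}, and it is minimal; moreover $V_j=0$ as soon as $j$ exceeds the largest resonance order between the roots of $p_0$, so the ``without loss of generality'' clause of that definition is automatic. I expect the only delicate point to be the multiplicity bookkeeping that equates $|I_j|$ with $\deg w_j$ and lines up the $p_{ij}$, $i\in I_j$, with a basis of $\C[\eu]/\langle w_j\rangle$; once the preceding Lemma is granted, the corollary itself is pure linear algebra, so the real content lives there rather than here.
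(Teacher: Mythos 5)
Your proposal is correct and follows essentially the same route as the paper: the image of $\boldsymbol P_j$ consists of the classes divisible by $w_j$, the preceding Lemma gives linear independence of the $p_{ij}$, $i\in I_j$, modulo $w_j$, and the count $|I_j|=\deg w_j=\operatorname{codim}\boldsymbol P_j\mathfrak P=\dim\Ker\boldsymbol P_j$ yields both transversality and minimality. The paper merely states this counting argument in one sentence, while you spell out the multiplicity bookkeeping and the identification $\mathfrak P/\boldsymbol P_j\mathfrak P\simeq\C[\eu]/\left<w_j\right>$ explicitly.
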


\begin{proof}
This follows from the linear independence above and the fact that the number of these polynomials is equal to the codimension of the image (which consists of polynomials of degree $\le n-1$ divisible by $w_j$).
\end{proof}

\subsection{Proof of Theorem~\ref{thm:min-fnf}}
Assume (by way of induction) that the a Fuchsian operator $L\in\F$ is already shown to be $\F$-equivalent to an operator $L_{j-1}\in\F$ whose $(j-1)$-jet is as in \eqref{min-fnf}, i.e.,
\begin{equation*}
\begin{gathered}
 L_{j-1}=(\eu-\l_1+r_{1,j-1})\cdots(\eu-\l_n+r_{n,j-1})+t^jv_j(\eu)+\cdots,
 \\
 r_{i,j-1}\in\C[t],\quad \supp r_i\subseteq J(\l_i)\cap[1,j-1],\quad i=1,\dots,n.
\end{gathered}
\end{equation*}
We will show that there exists an operator $L_j$ of the same form but with $\supp r_{i,j}\in J(\l_i)\cap [1,j]$, which is $\F$-equivalent to $L_{j-1}$. Indeed, adding monomials of order $j$ to the polynomials $r_{i,j-1}$,
$$
 r_{i,j}=r_{i,j-1}+c_it^j, \qquad c_i\ne 0\iff j\in J(\l_i),\quad i=1,\dots,n
$$
will affect only terms of order $j$ and higher after the expansion: the (polynomial) coefficient $v_j$ will be replaced by $v_j+\sum c_ip_{ij}$ by definition \eqref{pij} of the polynomials $p_{ij}$. By a suitable choice of the coefficients $c_i$ for $i\in I_j$, one can bring this sum into the range of the homological operator $\boldsymbol P_j$, as follows from Corollary~\ref{cor:transverse}.

For this choice the homological equation \eqref{homol} will be solvable with respect to $h_j,k_j$ by setting $h_0=1$, $q_j=-\sum c_i p_{ij}$.
Continuing this way, we eventually reach the values of $j$ which exceed the maximal order $N$ of possible resonances. The corresponding operator $L_N$, by construction $\F$-equivalent to the initial operator $L$, is $\F$-equivalent to its product part $\prod_{i=1}^n(\eu-\l_i+r_{iN}(t))$ with $\supp r_{iN}\in J(\l_i)$ by Proposition~\ref{prop:poly}.\qed

\begin{Rem}
The same argument allows to construct an effective factorization of any Fuchsian operator. Indeed, by Corollary~\ref{cor:epi}, one can always construct a linear combination $\sum_{i=1}^n c_ip_{ij}\in\C[\eu]$ which \emph{cancels} the term $v_j$. Proceeding this way, one can construct the formal factorization $L=\prod_{i=1}^n (\eu-\l_i+\hat r_i(t))$, $\hat r_i\in\C[[t]]$. One can show that in the Fuchsian case this factorization is always converging.
\end{Rem}

\subsection{Concluding remarks}
The minimality of the normal form \eqref{min-fnf} does not imply that coefficients of the first order factors $r_1,\dots,r_n\in\C[t]$ are $\F$-invariant. Nevertheless, one can expect that for operators of sufficiently high order there will appear moduli (numeric invariants) of $\F$-classification: for holomorphic gauge classification of Fuchsian systems this was discovered by V.~Kleptsyn and B.~Rabinovich \cite{kle-rab}.

\section{Convergence of the formal series}\label{sec:convergence}
Here we prove that the formal and analytic Fuchsian classifications for Fuchsian operators coincide.

More precisely, assume that two \emph{formal} operators $H,K\in\^\F$,  $H=\sum_{k=0}^{n-1}u_k(t)\eu^k$, $K=\sum_{k=0}^{n-1}v_k(t)\eu^k$ with formal coefficients $u_k,v_k\in\C[[t]]$ conjugate two Fuchsian operators $L=\sum_{k=0}^n a_k(t)\eu^k$, $M=\sum_{k=0}^n b_k(t)\eu^k$ with \emph{analytic} coefficients $a_k, b_k\in\O(\C,0)$, $a_n(0)b_n(0)\ne0$.

\begin{Thm}
The formal series for the coefficients $u_k,v_k\in\C[[t]]$ necessarily converge, hence $H,K\in\F$.
\end{Thm}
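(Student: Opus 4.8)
The plan is to turn the operator identity $MH=KL$, via the companion construction \eqref{f-companion}, into a linear differential system with holomorphic coefficients for an auxiliary matrix of formal power series, and then invoke the classical fact that formal solutions of Fuchsian (regular singular) systems converge.

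Since $L$ is Fuchsian, $L=r_0\eu^n+\cdots+r_n$ with $r_0$ invertible in $\O$, one can divide any formal operator from $\C[[t]][\eu]$ on the left by $L$ with remainder of $\eu$-order $\le n-1$, and both the quotient and the remainder stay in $\C[[t]][\eu]$; likewise for $M=q_0\eu^n+\cdots+q_n$. Reduce the operators $H,\eu H,\dots,\eu^{n-1}H$ modulo the left ideal generated by $L$, and collect the coefficient tuples of the resulting reduced forms (polynomials of $\eu$-degree $\le n-1$) as the rows of a matrix $\Psi\in\Mat(n,\C[[t]])$; since $\ord_\eu H\le n-1$, the first row of $\Psi$ is exactly the tuple $(u_0,\dots,u_{n-1})$ of coefficients of $H$ itself. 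The computation is driven by two elementary facts: the componentwise congruence $\eu\,(1,\eu,\dots,\eu^{n-1})^{\top}\equiv R_L\,(1,\eu,\dots,\eu^{n-1})^{\top}$ modulo the left ideal of $L$, where $R_L\in\Mat(n,\O)$ is the holomorphic companion matrix of \eqref{f-companion}; and, using that $M$ is Fuchsian, the identity $\eu^n H=q_0^{-1}MH-q_0^{-1}(q_1\eu^{n-1}+\cdots+q_n)H$ together with the fact that $q_0^{-1}MH=q_0^{-1}KL$ lies in the left ideal of $L$. Differentiating the congruences $\eu^{k-1}H\equiv(\text{$k$th row of }\Psi)$ and re-reducing modulo $L$ then yields, purely on the formal level and after straightforward bookkeeping, the gauge identity
\[
 \eu\Psi=R_M\Psi-\Psi R_L ,
\]
with $R_M\in\Mat(n,\O)$ the corresponding companion matrix of $M$.

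Thus $\Psi\in\Mat(n,\C[[t]])$ is a formal power series solution of the linear system $\eu Z=R_MZ-ZR_L$ on $\Mat(n,\C)$, whose right-hand side depends holomorphically on $t$ at the origin; such a system is regular singular. By the classical convergence theorem for Fuchsian systems — substituting $\Psi=\sum_k\Psi_kt^k$ yields a recurrence that determines $\Psi_k$, for all sufficiently large $k$, from $\Psi_0,\dots,\Psi_{k-1}$ with a coefficient of size $O(1/k)$, and a routine majorant estimate then forces $\|\Psi_k\|$ to decay geometrically; see \cite{thebook} — the series $\Psi$ converges, i.e.\ $\Psi\in\Mat(n,\O)$. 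In particular its first row is holomorphic, so $H\in\F$.

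Finally, with $H$ now analytic, $MH$ is a genuine analytic operator, and since the leading $\eu$-coefficient $r_0$ of the Fuchsian operator $L$ is a unit in $\O$, right division of $MH$ by $L$ stays in $\O[\eu]$ and produces $MH=QL+R$ with $Q\in\O[\eu]$ and $\ord_\eu R\le n-1$; comparison with the formal identity $MH=KL$ (where $\ord_\eu K\le n-1$ is forced by $\ord_\eu H\le n-1$) and uniqueness of division with remainder over $\C[[t]][\eu]$ give $R=0$ and $K=Q\in\O[\eu]$, hence $K\in\F$. The only real obstacle is the derivation of $\eu\Psi=R_M\Psi-\Psi R_L$: although it is just algebraic bookkeeping, one must verify that no negative powers of $t$ ever enter, and this is exactly what the invertibility of $r_0,q_0$ in $\O$ guarantees — it keeps every left division inside $\C[[t]][\eu]$ and makes $R_L,R_M$ holomorphic rather than merely meromorphic. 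Note that the extra nondegeneracy condition of Definition~\ref{def:fuchseq} plays no role in the convergence argument.
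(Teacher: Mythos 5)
Your argument is correct and shares the paper's overall skeleton --- produce a Fuchsian (regular singular) system with holomorphic coefficients satisfied by the unknown formal coefficients, invoke the classical convergence of formal solutions of Fuchsian systems, and recover $K$ from the uniqueness of division with remainder by $L$ --- but the way you manufacture that system is genuinely different. The paper expands $MH=KL$ as in \eqref{conj-e}, compares all $2n$ coefficients of powers of $\eu$, and uses a triangularity/elimination argument (the invertible leading coefficients of $L$ and $M$ sitting on the ``diagonal'') to solve for the top Euler derivatives $u_{kn}$ and for the $v_k$, arriving at $n$ equations of order $n$ for the $u_k$, which are then rewritten as a first-order Fuchsian system of size $n^2$. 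You instead reduce $H,\eu H,\dots,\eu^{n-1}H$ modulo the left ideal $\W L$ (legitimate over $\C[[t]]$ precisely because the leading $\eu$-coefficients of $L$ and $M$ are units in $\O$), assemble the remainders into the matrix $\Psi$, and derive the Sylvester-type gauge identity $\eu\Psi=R_M\Psi-\Psi R_L$ with the two holomorphic companion matrices of \eqref{f-companion}; this is cleaner, makes the $n^2$-dimensional Fuchsian system appear without any bookkeeping of which derivative enters which coefficient, and shows at once that the whole matrix $\Psi$, not only its first row $(u_0,\dots,u_{n-1})$, is holomorphic. Your treatment of $K$ (right division of the now-analytic $MH$ by $L$ inside $\O[\eu]$, plus uniqueness of quotient and remainder) is exactly the paper's concluding step. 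Two small corrections: in your parenthetical recap of the convergence theorem the norms $\|\Psi_k\|$ are forced to grow at most geometrically (which yields a positive radius of convergence), not to decay geometrically; and the conclusion $H\in\F$, as opposed to merely having holomorphic coefficients, also uses the hypothesis $H\in\^\F$, i.e.\ the invertibility of its leading coefficient at $t=0$, which is part of the theorem's setup. You are right that the nondegeneracy condition $\gcd_0(L,H)=1$ plays no role here; the paper's proof does not use it either.
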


\begin{proof}
One possibility of proving this result is to control explicitly the growth rate of the Taylor coefficients. However, a simple strategy is to use the fact that a (vector) formal Taylor series which solves a holomorphic Fuchsian system of equations, is necessarily convergent.

The conjugacy equation $MH=KL$ takes the form of a noncommutative identity
\begin{equation}\label{conj-e}
\biggl(\sum_{k=0}^n b_k(t)\eu^k\biggr)\biggl(\sum_{k=0}^{n-1}u_k(t)\eu^k\biggr)=
\biggl(\sum_{k=0}^{n-1}v_k(t)\eu^k\biggr)\biggl(\sum_{k=0}^n a_k(t)\eu^k\biggr).
\end{equation}

We claim that this identity implies that the coefficients $u_k(t)$ of the operator $H$, after passing to a companion form \eqref{f-companion}, together satisfy a Fuchsian system of linear ordinary differential equation. This follows from the direct inspection of the way the highest order derivatives of $u_k$ enter the expressions in \eqref{conj-e}.

The identity  \eqref{conj-e},
using the commutation relationship in the Weyl algebra
\begin{equation}\label{wc}
\eu f=f\eu+ g,\qquad  g=\eu(f)\in\O(\C,0)\text{ the Euler derivative of $f$},
\end{equation}
can be rewritten as equality of two differential operators $$\sum_{j=0}^{2n-1} l_j\eu^j=\sum_{j=0}^{2n-1}r_j\eu^j$$ of order $2n-1$, implying the identical coincidence of their coefficients, $l_j=r_j$. Thus we have a system of $2n$ linear ordinary differential equations of order $n$ involving $2n$ unknown functions $u_k,v_k$ and their derivatives. We will show that this system can be reduced to a Fuchsian system of $n^2$ differential equations of order $1$.

One can instantly verify that these equations have the following structure.
\begin{enumerate}
 \item All expressions for $l_j$ are linear with respect to the functions $u_k=u_{k,0}$ and their iterated Euler derivatives $u_{ki}=\eu u_{k,i-1}$ of orders $1\le i\le n$ with holomorphic coefficients.
 \item All expressions for $r_j$ are linear with respect to the functions $v_k$ with holomorphic coefficients.
\end{enumerate}
It is rather easy to control the coefficients with which the \emph{highest order} derivatives $u_{kn}$ and $v_k$ enter these equations.

The coefficients with which the variables $v_k$ enter the linear forms $r_j$, form an ``upper triangular'' $n\times 2n$-matrix with the same invertible diagonal entry $a_n$: the highest number forms $r_{2n-1},\dots,r_{2n-k}$ depend only on the variables $v_{n-1},\dots,v_{n-k}$, and the variable $v_{n-k}$ enters with the coefficient $a_n$ for all $k=1,\dots,n$.

The coefficients with which the highest order derivatives $u_{kn}$ enter the linear forms $l_j$, are zero for $l_{2n-1},\dots, l_{n}$ and form a ``diagonal'' $n\times 2n$-matrix with the same invertible diagonal entry $b_n$ in the forms $l_{n-1},\dots,l_0$. Indeed, the formulas \eqref{wc} imply that a highest order derivative $u_{kn}$ can appear only after iterated transposition with the term $b_n\eu^n$ and only before the powers of the type $\eu^{j-n}$.

Together these two observations imply that the system of the linear equations $l_j=r_j$, $j=2n-1,\dots,1,0$ can be resolved with respect to the variables $u_{kn},v_k$, in particular,
\begin{equation}\label{fsc}
u_{kn}(t)=\sum_{i=0}^{n-1}\sum_{j=0}^{n-1}c_{knij}(t)u_{ij}(t), \qquad c_{knij}\in\O(\C,0),\ k=0,\dots,n-1
\end{equation}
(and of course similar expressions for the $v_k$).

This system of $n$ linear ordinary differential equations of order $n$ with respect to the functions $u_k=u_k(t)$ is explicitly resolved with respect to the highest order derivatives, hence is a \emph{Fuchsian} system of $n^2$ first order equations in exactly the same way as in \eqref{f-companion}.

It remains only to refer to the well-known fact: any formal solution of a Fuchsian system converges, see \cite{thebook}*{Lemma~16.17 and Theorem~16.16}. Thus any noncommutative series for an operator $H$ conjugating two Fuchsian operators $L,M$, converges. Convergence of the series for $K$ follows by the uniqueness of the right division of $MH$ by $L$.
\end{proof}

\begin{bibdiv}
\begin{biblist}

\bib{thebook}{book}{
   author={Ilyashenko, Yulij},
   author={Yakovenko, Sergei},
   title={Lectures on analytic differential equations},
   series={Graduate Studies in Mathematics},
   volume={86},
   publisher={American Mathematical Society, Providence, RI},
   date={2008},
   pages={xiv+625},
   isbn={978-0-8218-3667-5},
   review={\MR{2363178 (2009b:34001)}},
}

\bib{ince}{book}{
   author={Ince, E. L.},
   title={Ordinary Differential Equations},
   publisher={Dover Publications, New York},
   date={1944},
   pages={viii+558},
   review={\MR{0010757 (6,65f)}},
}

\bib{kle-rab}{article}{
   author={Kleptsyn, V. A.},
   author={Rabinovich, B. A.},
   title={Analytic classification of Fuchsian singular points},
   language={Russian, with Russian summary},
   journal={Mat. Zametki},
   volume={76},
   date={2004},
   number={3},
   pages={372--383},
   issn={0025-567X},
   translation={
      journal={Math. Notes},
      volume={76},
      date={2004},
      number={3-4},
      pages={348--357},
      issn={0001-4346},
   },
   review={\MR{2113080 (2005i:34119)}},
   doi={10.1023/B:MATN.0000043462.06397.ab},
}

\bib{mero-flat}{article}{
   author={Novikov, Dmitry},
   author={Yakovenko, Sergei},
   title={Lectures on meromorphic flat connections},
   conference={
      title={Normal forms, bifurcations and finiteness problems in
      differential equations},
   },
   book={
      series={NATO Sci. Ser. II Math. Phys. Chem.},
      volume={137},
      publisher={Kluwer Acad. Publ., Dordrecht},
   },
   date={2004},
   pages={387--430},
   review={\MR{2085816 (2005f:34255)}},
}

\bib{ore}{article}{
   author={Ore, \Ore ystein},
   title={Theory of noncommutative polynomials},
   journal={Ann. of Math. (2)},
   volume={34},
   date={1933},
   number={3},
   pages={480--508},
   issn={0003-486X},
   review={\MR{1503119}},
   doi={10.2307/1968173},
}

\bib{singer}{article}{
   author={Singer, Michael F.},
   title={Testing reducibility of linear differential operators: a
   group-theoretic perspective},
   journal={Appl. Algebra Engrg. Comm. Comput.},
   volume={7},
   date={1996},
   number={2},
   pages={77--104},
   issn={0938-1279},
   review={\MR{1462491 (98e:12007)}},
   doi={10.1007/BF01191378},
}

\bib{tsarev}{article}{
   author={Tsar\"ev, Sergey P.},
   title={Factorization of linear differential operators and systems},
   conference={
      title={Algebraic theory of differential equations},
   },
   book={
      series={London Math. Soc. Lecture Note Ser.},
      volume={357},
      publisher={Cambridge Univ. Press, Cambridge},
   },
   date={2009},
   pages={111--131},
   review={\MR{2484907 (2010b:34011)}},
}

\end{biblist}
\end{bibdiv}

\end{document}